\definecolor{colorcita}{RGB}{21,86,130}
\definecolor{colorref}{RGB}{5,10,177}
\definecolor{colorweb}{RGB}{177,6,38}
\numberwithin{subsection}{section}
\newtheorem{theorem}{Theorem}[section]
\newtheorem{proposition}[theorem]{Proposition}
\newtheorem{corollary}[theorem]{Corollary}
\newtheorem{lemma}[theorem]{Lemma}
\theoremstyle{definition}
\theoremstyle{remark}
\newcommand{\dis}{\displaystyle}
\newcommand{\Bb}{\mathcal B}
\newcommand{\NN}{\mathbb N}
\newcommand{\EE}{\mathbb E}
\newcommand{\RR}{\mathbb R}
\DeclareMathOperator{\id}{\mathrm{id}}
\DeclarePairedDelimiter\floor{\lfloor}{\rfloor}
\newcommand{\bi}{\mathbf i}
\newcommand{\bj}{\mathbf j}
\newcommand{\sinc}{\operatorname{sinc}}
\DeclareMathOperator{\sid}{Sid}
\def\N{\mathbb{ N}}
\def\R{\mathbb{ R}}
\begin{document}
\title[Asymptotic insights for spaces of functions on the Boolean cube]{Asymptotic insights for Projection, Gordon--Lewis and Sidon Constants\\ in Boolean Cube Function Spaces}

\author[Defant]{A.~Defant}
\address{%
Institut f\"{u}r Mathematik,
Carl von Ossietzky Universit\"at,
26111 Oldenburg,
Germany}
\email{defant$@$mathematik.uni-oldenburg.de}

\author[Galicer]{D.~Galicer}
\address{Departamento de Matem\'{a}tica,
Facultad de Cs. Exactas y Naturales, Universidad de Buenos Aires and IMAS-CONICET. Ciudad Universitaria, Pabell\'on I
(C1428EGA) C.A.B.A., Argentina}
\email{dgalicer$@$dm.uba.ar}

\author[Mansilla]{M.~Mansilla}
\address{Departamento de Matem\'{a}tica,
Facultad de Cs. Exactas y Naturales, Universidad de Buenos Aires and IAM-CONICET. Saavedra 15 (C1083ACA) C.A.B.A., Argentina}
\email{mmansilla$@$dm.uba.ar}

\author[Masty{\l}o]{M.~Masty{\l}o}
\address{Faculty of Mathematics and Computer Science, Adam Mickiewicz University, Pozna{\'n}, Uniwersytetu
\linebreak
Pozna{\'n}skiego 4,
61-614 Pozna{\'n}, Poland}
\email{mieczyslaw.mastylo$@$amu.edu.pl}

\author[Muro]{S.~Muro}
\address{FCEIA, Universidad Nacional de Rosario and CIFASIS, CONICET, Ocampo $\&$ Esmeralda, S2000 Rosario, Argentina}
\email{muro$@$cifasis-conicet.gov.ar}

\date{}

\thanks{The research of the fourth author was supported by the National Science Centre (NCN), Poland,
Project 2019/33/B/ST1/00165; the second, third and fifth author were supported by CONICET-PIP 11220200102336 and PICT 2018-4250. The research of the fifth author is additionally supported by ING-586-UNR}

\begin{abstract}
 The main aim of this work is to study important local Banach space constants for Boolean cube function spaces.
Specifically, we focus on $\mathcal{B}_{\mathcal{S}}^N$,  the finite-dimensional Banach space of all real-valued functions defined on the $N$-dimensional Boolean cube $\{-1, +1\}^N$ that have Fourier--Walsh
expansions supported on a fixed~family $\mathcal{S}$ of subsets of $\{1, \ldots, N\}$.   Our investigation centers on the  projection, Sidon and Gordon--Lewis constants of this function space.  We combine tools from different areas to derive exact formulas and asymptotic estimates of these parameters for special types of families  $\mathcal{S}$ depending on the dimension $N$ of the Boolean cube and other complexity  characteristics of the support set $\mathcal{S}$.  Using local  Banach space theory, we establish the intimate relationship among these three important constants.

\end{abstract}

\subjclass[2020]{Primary:  06E30; 46B06; 46B07.  Secondary: 42A16; 43A75}

\keywords{Projection constants, Boolean functions, local theory of Banach spaces.}
\maketitle


\section{Introduction}

We study important Banach space invariants such as projection, Sidon and Gordon--Lewis constants of certain natural subspaces of the
Banach space of all real-valued functions defined on the  $N$-dimensional Boolean cube $\{-1, +1\}^N$.
 Recall that every function  $f\colon \{-1, +1\}^N \to \mathbb{R}$ has a~Fourier--Walsh
expansion of the form
\[
f(x) = \sum_{S\subset \{1,\ldots,N\}}\widehat{f}(S) x^{S}\,,
\]
where for each $(x_1, \ldots, x_N)\in \{-1, +1\}^N$, $x^S:=\prod_{k \in S} x_k$ is a Walsh function. The set of all $S$ for which 
$\widehat{f}(S)\neq 0$
is called the spectrum of $f$.

More precisely, given  a~set $\mathcal{S}$ of subsets in  $\{1,\ldots,N\}$,  we consider the space
$\mathcal{B}^N_{\mathcal{S}}$ of all functions $f\colon \{-1, +1\}^{N} \to \mathbb{R}$ with
Fourier--Walsh expansions supported on $\mathcal{S}$, that is, $\widehat{f}(S) \neq 0$
only if $S \in \mathcal{S}$. Endowed with the supremum norm on $\{-1,+1\}^{N}$, this is
a~finite dimensional  Banach space.
Our main goal then is to find asymptotically correct estimates for the projection constant
$\boldsymbol{\lambda}\big(\mathcal{B}_{\mathcal{S}}^N\big)$, and to link this invariant with
other important invariants from Fourier analysis and local Banach space theory like the Sidon,
 unconditional basis  or Gordon--Lewis constants.

We note that the Fourier analysis of
functions on Boolean cubes is essential in  theoretical computer science, and plays a key role
in combinatorics, random graph theory, statistical physics, Gaussian geometry, the theories of metric
spaces/Banach spaces, learning theory,  or  social choice theory (see, e.g., \cite{o2008some} and
\cite{o2014analysis} and the references therein). Moreover, the last decades show  growing interest
for applications of Boolean functions in the context of quantum algorithms complexity and  quantum
information \cite{beals2001quantum}. For recent important developments and applications
in this direction see also \cite{aaronson2014need,bourgain2002distribution,defant2019fourier,eskenazis2022learning,
Eskenazis2023Low-degree,rouze2022quantum,slote2023noncommutative,volberg2022noncommutative}
.

We mainly focus on  the Banach space $\mathcal{B}_{\leq  d}^N :=\mathcal{B}^N_{\{S \colon |S|\leq d\}}$,
that is  all real-valued functions $f$ on the compact abelian group $\{-1, +1\}^N$ with Fourier transforms
$\hat{f}$ supported on all subsets of $[N]$ with cardinality $\leq d$, and similarly on the Banach space
$\mathcal{B}_{=d}^N :=\mathcal{B}_{\{S \colon |S|= d\}}$.

The study of complemented subspaces $X$ of a~Banach space $Y$ and their projection constants
has a~long history going  back to the beginning of operator theory in Banach spaces.
For a~general overview of the state of art of the  theory of projection
constants in Banach spaces, we refer to the excellent  monograph  \cite{tomczak1989banach}
by Tomaczak-Jaegermann and references therein.


We recall that if $X$  is a~subspace of a~Banach space $Y$, then the relative projection
constant of $X$ in $Y$ is defined by
\[
\boldsymbol{\lambda}(X, Y) = \inf\big\{\|P\|: \,\, P\in \mathcal{L}(Y, X),\,\, P|_{X} = \id_X\big\}\,,
\]
where $\id_X$ is the identity operator on $X$, and the (absolute) projection constant of  $X$ is
given by
\begin{equation}\label{definition}
\boldsymbol{\lambda}(X) := \sup \,\,\boldsymbol{\lambda}(I(X),Z)\,,
\end{equation}
where the supremum is taken over all Banach spaces $Z$ and isometric embeddings $I\colon X \to Z$.
The following straightforward result shows the intimate link  between projection constants and
extensions of linear operators: For every Banach space $Y$ and its subspace $X$ one has
\begin{align*}
\boldsymbol{\lambda}(X, Y) = \inf\big\{c>0: \,\,\text{$\forall\, T \in \mathcal{L}(X, Z)$ \,\, $\exists$\,\, an extension\,
$\widetilde{T}\in \mathcal{L}(Y, Z)$\, with $\|\widetilde{T}\| \leq c\,\|T\|$}\big\}\,.
\end{align*}

Drawing from Rudin's averaging technique 
from \cite{rudin1962projections}
for estimating the projection constant (dating back to the 1960s), an adapted   technique 
for spaces of trigonometric polynomials on compact abelian groups was devised in \cite{defant2024projection}. 
We apply this new perspective into the framework of functions on Boolean cubes. As a consequence, we in
Theorem~\ref{bool-int} see that
\begin{equation*}
\boldsymbol{\lambda}\big(\mathcal{B}_\mathcal{S} ^{N}\big) =\frac{1}{2^N}
\sum_{  x \in\{-1, +1\}^N }\big| \sum_{S \in \mathcal{S}} x^s \big|.
\end{equation*}
In principle, this integral can be calculated  with a~computer - at least for concrete well-structured
families $\mathcal{S}$ in $[N]:=\{1,2,\dots,N\}$. Nevertheless, if $N$ is large or  the set $\mathcal{S}$
is 'too  big', this might get unfeasible. Therefore, it is important to study the asymptotic order of
$\boldsymbol{\lambda}\big(\mathcal{B}_\mathcal{S} ^{N}\big)$ in the dimension  $N$ and/or other parameters
quantifying the complexity of $\mathcal{S}$.

Among others, we show in Theorem~\ref{thm: limit bool homog} that
\[
\lim_{N\to \infty}N^{-d/2} \boldsymbol{\lambda}\big(\mathcal{B}_{\mathcal{S}}^N\big)
= \frac{1}{\sqrt{2\pi}} \int_{\mathbb{R}} \frac{|h_d(t)|}{d!} \exp(-t^2/2)\,d\!t\,,
\]
where $h_d$ is the $d$-th Hermite polynomial.

While this outcome can be derived from a very interesting identity by Beckner \cite{beckner1975inequalities} from the mid-seventies—utilizing it as a black box—alongside the central limit theorem, we derived it (without prior knowledge of it) from a specific technique to study the combinatorial structure of the index set $\mathcal{S}$ involved. This technique is grounded on ideas introduced in \cite{galicer2021monomial} and \cite{mansilla2019thesis}, where it was employed to examine sets of monomial convergence of spaces of holomorphic functions in high dimensions.
We highlight that this combinatorial tool holds significant potential, extending its applicability to diverse contexts. In this manuscript, we present both methods for obtaining the result.


The Sidon constant plays a pivotal role in Fourier analysis, providing crucial insights into the behavior and convergence properties of Fourier series and related transformations. 
Recall that, given  $\mathcal{S} \subset [N]$,  the Sidon constant of the characters $(\chi_S)_{S \in \mathcal{S} }$ in the group $\{ -1, 1\}^{N}$ is the best constant $C >0$ such that for all $f \in \mathcal{B}^N_{\mathcal{S} }$,
\begin{equation}\label{sidon}
\sum_{S \in \mathcal{S} } |\hat{f}(S)| \,\le \,C \| f \|_\infty\,,
\end{equation}
and we in the following are going to denote this constant by   $\boldsymbol{\sid}(\mathcal{B}^N_{\mathcal{S} })$. 
In the context of spaces of Boolean functions, this invariant establishes a connection between the Fourier coefficients and the supremum norm of the function. Essentially, it quantifies how the distribution of the Fourier coefficients across different frequencies influences the overall behavior of the function, as reflected by its supremum norm. This relationship provides valuable insights into the harmonic structure and complexity of Boolean functions, aiding in the analysis and understanding of their properties and computational characteristics.

We demonstrate how to estimate the Sidon constant of spaces of Boolean functions for a specific set $\mathcal S$ and the projection constant of another space associated with the 'reduced form' of $\mathcal S$. To achieve this, we employ various techniques from local analysis of Banach spaces, including the connection with the so-called Gordon--Lewis constant, tensorial techniques, and certain symmetrization/desymmetrization ideas.

 The main result here is Theorem~\ref{BGL3} (and more generally Theorem~\ref{BGL2}), where we show that in the  homogeneous
case the Sidon constant of  $\mathcal{B}_{=d}^N $ may be estimated by $\boldsymbol{\lambda}\big(\mathcal{B}^N_{=d-1}\big)$
up to a constant $C^d$, where $C >0$ is universal. 
The proof goes a detour relating first the Sidon constant with the 
Gordon--Lewis constant (Theorem~\ref{BGL1}) and then in a~ second  step the Gordon--Lewis constant 
with the projection constant (Theorem~\ref{gl_versus_proj}).

Summarizing, we use local Banach space theory to show that the Sidon,  Gordon--Lewis and projection constant of every Banach space
$\mathcal{B}_{\mathcal{S}}^N$ are intimately  linked -- a connection of which all three of these fundamental constants  benefit
abundantly.

Using a recent variant of the Bohnenblust-Hille inequality for functions on Boolean cubes from \cite{defant2019fourier}, we conclude the manuscript by relating the Sidon constant of the space $\mathcal{B}^N_{\mathcal{S}}$ with the 'size' of the support set $\mathcal{S}$. 

\section{Preliminaries}
Standard notation from Banach space theory as e.g., used in the monographs \cite{diestel1995absolutely,LT1,
pisier1986factorization, tomczak1989banach, wojtaszczyk1996banach} is going to be needed. In this note, we basically only consider  real
Banach spaces.

\subsection{Functions on  Boolean cubes}
Throughout the paper we for $N \in \mathbb{N}$ call $\{-1, +1\}^N$ the $N$-dimen\-sional Boolean cube. A~Boolean function is any function
$f\colon \{-1, +1\}^N \rightarrow \{-1, +1\}$, more generally,   functions  $f : \{-1, +1\}^N  \rightarrow \mathbb{R}$ are said to be  `real valued functions on the  $N$-dimensional Boolean cube'.

The study of  real valued functions on Boolean cubes is deeply influenced by Fourier analysis. Considering the $N$-dimensional Boolean cube
$\{-1, +1\}^N$ as a~compact abelian group endowed with the coordinatewise product and the  discrete topology (so the Haar measure is given
by the normalized counting measure), we may apply the machinery given by abstract harmonic analysis.

In particular, the integral or expectation of  each function $f:\{-1, +1\}^N \rightarrow \mathbb{R}$ is given by
\[
\mathbb{E}\big[ f \big] := \frac{1}{2^{N}} \sum_{x \in \{-1, +1\}^N}{f(x)}\,.
\]
The characters on $\{-1, +1\}^N$  are the so-called Walsh functions defined as
\[
\chi_{S}\colon \{-1, +1\}^N \rightarrow \{ -1,1\} \, ,
\hspace{3mm} \chi_{S}(x) = x^{S}:= \prod_{k \in S}{x_{k}} \,\, \,\,\,\, \text{for} \,\,\,\,\,\,x \in \{-1, +1\}^N,
\]
where $S \subset [N]:=\{ 1, \ldots, N\}$ and  $\chi_{\emptyset}(x):= 1$ for each $x\in \{-1, +1\}^N$.
This allows to associate to each function $f:\{-1, +1\}^N \rightarrow \R$ its Fourier--Walsh expansion
\begin{equation*}\label{equa:FourierExpansionShort}
f(x) = \sum_{S \subset [N]}{\widehat{f}(S) \, x^S}\, , \hspace{3mm} x \in \{-1, +1\}^N\,,
\end{equation*}
where  $\widehat{f}(S) = \mathbb{E}\big[ f \cdot \chi_{S} \big]$ are the Fourier coefficients.

Given $d \in \mathbb{N}$, we say that  $f$ has degree $d $ whenever  $\widehat{f}(S) = 0$ for all $|S| > d$, and $f$ is  $d$-homogeneous whenever additionally  $\widehat{f}(S) = 0$ provided  $|S| \neq d$; where, as usual,  $\vert S \vert$ stands for the cardinality of the set $S$.

For  every set  $\mathcal{S}$ of subsets $S \subset [N] $, we define the linear space $\mathcal{B}^{N}_{\mathcal{S}}$ of all  functions $f\colon \{-1, +1\}^N \rightarrow \mathbb{R}$,
which have Fourier--Walsh expansions supported on $\mathcal{S}$. Endowed  with the supremum norm
$\|\cdot\|_\infty$ on the $N$-dimensional Boolean cube, this space  turns into a  Banach space.
We mainly concentrate  on the following special classes of functions on the $N$-dimensional
Boolean cube:
\begin{itemize}
\setlength\itemsep{0.8em}
\item $\mathcal{B}^{N}$ :=  all functions $f:\{-1, +1\}^N \rightarrow \mathbb{R}$,
\item $\mathcal{B}^{N}_{=d}$  :=  all  $d$-homogeneous $f:\{-1, +1\}^N \rightarrow \mathbb{R}$,
\item $\mathcal{B}^{N}_{\leq d}$ :=  all $f:\{-1, +1\}^N \rightarrow \mathbb{R}$ with  degree less or equal $d$\,.
\end{itemize}
Obviously, we have the isometric identity
\begin{align} \label{identitiesI}
\mathcal{B}^N = \ell_\infty\big(\{-1, +1\}^N\big)\,, \,\,\,\,\,\,\,\,f \mapsto (f(x))_{x \in\{-1, +1\}^N}\,.
\end{align}

\subsection{Functions on Boolean cubes  vs tetrahedral polynomials}

As usual we call the elements $\alpha = (\alpha_i)$ in $\mathbb{N}_0^{(\mathbb{N})}$ (all finite sequences in $\mathbb{N}_0$) multi indices,
and $|\alpha | = \sum \alpha_i$ is their so-called order. For $N \in \mathbb{N}$ and $d \in \mathbb{N}_0$
\begin{align*}
\text{$\Lambda(d,N) := \{ \alpha \in \mathbb{N}_0^N \colon |\alpha | = d \}$, \quad \,$\Lambda(\leq d,N)
:= \{ \alpha \in \mathbb{N}_0^N \colon |\alpha | \leq  d \}$ }
\end{align*}
denote  the sets of multi indices which are  $d$-homogeneous and of order $\leq d$, respectively.
A multi index $\alpha = (\alpha_i)\in \mathbb{N}_0^{(\mathbb{N})}$ is said to be   tetrahedral whenever
each entry $\alpha_i$ is either $0$ or $1$. We denote by $\Lambda_T$ the subset of all tetrahedral multi indices in $\mathbb{N}_0^{(\mathbb{N})}$. For  $d \leq N$ we let
\begin{align*}
\text{$\Lambda_T(d,N) := \Lambda(d,N) \cap \Lambda_T$ , \quad \, $\Lambda_T(\leq d,N) := \Lambda(\leq d,N) \cap \Lambda_T$. }
\end{align*}
It  turns out to be convenient to have an equivalent description of $\Lambda(d,N)$. We write
\begin{align*}
& \mathcal{M}(d,N) =[N]^d\,,
\\
&
\mathcal{J}(d,N) = \big\{\bj = (j_1, \ldots, j_d) \in \mathcal{M}(d,N)\colon \, j_1 \leq \ldots \leq j_d\big\}\,.
\end{align*}
Then there obviously is a~canonical bijection between $\mathcal{J}(d,N)$ and $\Lambda(d,N)$. Indeed,
assign to $\bj \in \mathcal{J}(d,N) $ the multi index $\alpha \in \Lambda(d,N)$ given by $\alpha_r = |\{k \colon \bj_k = r\}|,\, 1 \leq r \leq N$, and conversely assign to  each $\alpha \in \Lambda(d,N)$
the index  $\bj \in \mathcal{J}(d,N)$, where $j_1 =\ldots =j_{\alpha_1} =1$, $j_{\alpha_1 +1} =\ldots =j_{\alpha_1+\alpha_2} =2,\, \dots$

On $\mathcal{M}(d,N)$ we consider the equivalence relation: $\bi  \sim \bj$ if there is a permutation
$\sigma$ on $\{1, \ldots,d\}$ such that $(i_1, \ldots, i_d) = (j_{\sigma(1)}, \ldots, j_{\sigma(d)})$.
The equivalence class of $\bi \in \mathcal{M}(d,N)$ is denoted by $[\bi]$, and its cardinality  by
$|[\bi]|$.  We write $|[\alpha]| :=  |[\bj]|$ provided that $\bj$ is associated with $\alpha$, and
in this case have that
\begin{equation} \label{scholz}
    |[\alpha]| =  |[\bj]| =\frac{d!}{\alpha!}\,,
\end{equation}
where $\alpha!:= \alpha_1! \cdot \ldots \cdot \alpha_N!\,.$

We consider finite polynomials $P: \mathbb{R}^N \to \mathbb{R}$, i.e., polynomials of the form
$P(x) = \sum_{\alpha \in F} c_\alpha x^\alpha, \, x \in \mathbb{R}$, where $F$ is a finite set of multi indices in $\mathbb{N}_0^{(\mathbb{N})}$.
We write $\mathcal{P}_d(\ell_\infty^N)$  for all $d$-homogenous  polynomials, so polynomials  of the form
$P(x) = \sum_{\alpha \in \Lambda(d,N) } c_\alpha x^\alpha, \, x \in \mathbb{R}$. The space $\mathcal{P}_d(\ell_\infty^N)$, together with the supremum norm
taken on the unit ball of $\ell_\infty^N$,  forms a Banach space.
A polynomial  is said to be tetrahedral, whenever $c_\alpha =0 $  for all $\alpha \notin \Lambda_T$.
The subspace of all tetrahedral $d$-homogeneous polynomials is denoted
by $\mathcal{T}_d(\ell_\infty^N)$, that is, all polynomials  of the form
$P(x) = \sum_{\alpha \in \Lambda_T(d,N)} c_\alpha x^\alpha, \, x \in \mathbb{R}$. Analogously, we define the Banach spaces
$\mathcal{P}_{\leq d}(\ell_\infty^N)$
and
$\mathcal{T}_{\leq d}(\ell_\infty^N)$.

Obviously, for each  $f\colon \{-1, +1\}^N \rightarrow \mathbb{R}$ there is a~unique tetrahedral polynomial $P_f : \mathbb{R}^N\rightarrow \mathbb{R}$ for which the  following diagram
commutes:
\begin{equation*} \label{Bild}
\begin{tikzcd}[column sep=small]
\{-1, +1\}^N \arrow[hookrightarrow]{rr}{} \arrow[swap]{dr}{f}& & \,\,\mathbb{R}^{N} \,\,, \arrow{dl}{P_{f}}\\
& \mathbb{R} &
\end{tikzcd}
\end{equation*}
and in this case
\begin{equation*} \label{affine}
\| f\|_{\infty} := \sup_{x \in \{-1, +1\}^N}{|f(x)|} = \sup_{x \in \{-1, +1\}^N}{|P_{f}(x)|} = \sup_{x \in [-1,1]^{N}}{|P_{f}(x)|} =: \| P_{f}\|_{\infty}.
\end{equation*}
Moreover,  each subset  $S \subset [N]$  may be identified with a tetrahedral multi index
$\alpha^S \in  \mathbb{N}_0^N$ given by $\alpha^S(k) = 1, k \in S$ and $\alpha^S(k) = 0, k \notin S$.
Conversely, every
tetrahedral multi index
$\alpha \in  \mathbb{N}_0^N$ defines the subset $S = \text{supp} \,\alpha \subset [N] $.
We write
\[
\Lambda(\mathcal{S}) = \{ \alpha^S \in  \mathbb{N}_0^N \colon S \in \mathcal{S} \}\,,
\]
and $\mathcal{P}_{\Lambda(\mathcal{S})}(\ell_\infty^N (\mathbb{R}))$ for the Banach space of all polynomials on $\ell_\infty^N$,
which are generated by functions  $f \in\mathcal{B}^{N}_{\mathcal{S}}$.
  This all together leads to the  isometric identity
 \begin{equation}\label{identities}
   \mathcal{B}^{N}_{\mathcal{S}} = \mathcal{P}_{\Lambda(\mathcal{S})}(\ell_\infty^N )\,, \,\,\,\,\,\,\,\,f \mapsto P_f\,.
 \end{equation}
 In view of this identification, it from time to time  is  convenient  to use the usual monomial notation, that is, for $S\subset [N]$ we identify the Boolean function $\chi_S$ with $x^{\alpha_S}$.

\subsection{Sidon, unconditional basis and Gordon--Lewis constants}

\text{}

$\bullet$ Recall that the unconditional basis constant of a basis $(e_i)_{i \in I}$ of a Banach space $X$ is given by
the infimum over all $K > 0$ such that for any finitely supported family $(\alpha_i)_{i \in I}$ of scalars and for any finitely supported family $(\varepsilon_i)_{i \in I}$ with  $\varepsilon_i \in  \{-1,+1\}, \,  i \in I$ we have
\begin{equation}\label{unconditionality}
\Big\Vert  \sum_{i \in I} \varepsilon_i \alpha_i e_i \Big\Vert \leq K \Big\Vert \sum_{i \in I} \alpha_i e_i \Big\Vert\,.
\end{equation}
We denote the unconditional basis constant of $(e_i)_{i\in I}$ by
$\boldsymbol{\chi}((e_i)_{i\in I})= \boldsymbol{\chi}((e_i)_{i\in I}; X)$.
  We also write  $\boldsymbol{\chi}((e_i)_{i \in I})= +\infty$, whenever $(e_i)_{i\in I}$ is not unconditional, and say
that $(e_i)_{i\in I}$ is a~$1$-unconditional basis, whenever $\boldsymbol{\chi}((e_i)_{i\in I}) =1$.
The unconditional basis constant
$\boldsymbol{\chi}(X)$
of $X$ is defined to be the infimum of $\boldsymbol{\chi}((e_i)_{i\in I})$ taken over all possible unconditional bases
$(e_i)_{i\in I}$ of~$X$.

It should be noted that, from the Banach space point of view, the Sidon constant  $\sid(\mathcal{B}^N_{\mathcal{S} })$ is nothing else than the unconditional basis constant of the Walsh functions $(\chi_S)_{S \in \mathcal{S} }$, that is   $$\boldsymbol{\sid}(\mathcal{B}^N_{\mathcal{S} }) = \boldsymbol{\chi}((\chi_S)_{S \in \mathcal{S} })\,.$$

$\bullet$ Given Banach spaces $X$, $Y$ and $1 \leq p \leq \infty$, an  operator $u\in \mathcal{L}(X,Y)$ is said to be $p$-factorable
whenever  there exist a~measure space $(\Omega, \Sigma, \mu)$ and operators $v \in  \mathcal{L}(X, L_p(\mu))$,
$w \in \mathcal{L}(L_p(\mu), Y^{\ast\ast})$, satisfying the following factorization
$
  \kappa_Yu\colon X \stackrel{v} \longrightarrow L_p(\mu) \stackrel{w} \longrightarrow Y^{\ast\ast}\,;
$
here, as usual, $\kappa_{Y}\colon Y \to Y^{\ast\ast}$ is the canonical embedding. In this case the $\gamma_p$-norm of the $p$-factorable
operator $u$ is given by
\begin{equation*}\label{fac-p}
 \gamma_p(u) = \inf \|v\| \|w\|\,,
\end{equation*}
where the infimum is taken over all possible factorizations.
We are  mainly interested  in the  norms  $\gamma_p$ for operators acting between  finite dimensional Banach spaces $X$ and $Y$. In
this case, the infimum in \eqref{fac-p} is realized  considering all possible factorizations of the more simple form
\begin{equation*}
\begin{tikzcd}
X  \arrow[rd, "v"']  \arrow[rr, "u"] &  & Y \,\,,\\
& \ell_p^n   \arrow[ru, "w"'] &
\end{tikzcd}
\end{equation*}
where $n$ is arbitrary.

$\bullet$ An operator $u\in \mathcal{L}(X,Y)$ is said to be $1$-summing  if there is a~constant $C>0$ such that
for each choice of finitely many $x_1, \ldots, x_N\in X$ one has
\[
\sum_{j=1}^N \|ux_j\|_Y  \leq C
\sup\Big\{\sum_{j=1}^{N} |x^\ast(x_j)|\,:\,\, \|x^\ast\|_{X^\ast} \leq 1\Big\}\,.
\]
By $\pi_{1}(u\colon X\to Y)$  we denote the least such $C$ satisfying this inequality.

$\bullet$ A Banach space $X$ has the Gordon--Lewis property if every $1$-summing operator $u\colon X \to \ell_2$ is $1$-factorable. In this case, there is a~constant $C>0$ such that for all  1-summing operators $u \colon X \to \ell_2$
\begin{equation*}
\label{eq: def G-L}
\gamma_1(u)\leq C\,\pi_1(u)\,,
\end{equation*}
and the best such $C$ is called the Gordon--Lewis constant of $X$ and denoted by $\boldsymbol{g\!l} (X)$.

A fundamental tool for the study of unconditionality in Banach spaces is the  Gordon--Lewis inequality from \cite{gordon1974absolutely}
(see also \cite[17.7]{diestel1995absolutely} or \cite[Proposition~21.13]{defant2019libro}): For every Banach space $X$ with an unconditional basis~$(e_i)_{i \in I}$ one has
\begin{equation} \label{gl-inequality}
\boldsymbol{g\!l}(X)\leq  \boldsymbol{\chi}(X) \leq \boldsymbol{\chi}( (e_i)_{i \in I}).
\end{equation}
In contrast to the unconditional basis constant, the Gordon--Lewis constant has the useful (ideal) property that
\begin{equation} \label{niceprop}
\boldsymbol{g\!l}(X)\leq \|u\|  \|v\|\,\,\boldsymbol{g\!l}(Y)\,,
\end{equation}
whenever $\text{id}_{X} = uv$ for appropriate operators $u: X \to Y$ and $v: Y \to X$.

\subsection{Projection constants} \label{projection constants}
General bounds for projection constants  of various finite dimensional Banach spaces were studied by many authors
(see again \eqref{definition} for the definition).

The projection constant of a~Banach space $X$ can be formulated in terms of the  $\infty$-factorization norm of the identity
operator $\id_X$.  More precisely, if $X$ is a~Banach space and $X_0$
is any subspace of some $L_{\infty}(\mu)$ isometric to $X$, then
\begin{align} \label{gammainfty}
\boldsymbol{\lambda}(X)= \gamma_\infty(\id_X)= \boldsymbol{\lambda}(X_0, L_{\infty}(\mu)).
\end{align}

Recall the  following fundamental
estimate due to Kadets-Snobar \cite{kadecsnobar}: For every $n$-dimen\-sional Banach space $X_n$ one has
\begin{equation} \label{kadets1}
\boldsymbol{\lambda}(X_n) \leq \sqrt{n}\,.
\end{equation}
In contrast, K\"onig and Lewis \cite{koniglewis} showed that for any Banach space $X_n$ of dimension $n \ge 2$ the strict
inequality $\boldsymbol{\lambda}(X_n) < \sqrt{n}$ holds, and this estimate was later improved by Lewis \cite{lewis}.

The exact values of $\boldsymbol{\lambda}(\ell_2^n)$ and $\boldsymbol{\lambda}(\ell_1^n)$ were computed by Gr\"unbaum
\cite{grunbaum} and Rutovitz \cite{rutovitz}: In the complex case
\begin{equation*}\label{grunbuschC-A}
  \boldsymbol{\lambda}\big(\ell_2^n(\mathbb{C})\big)  = n \int_{\mathbb{S}_{n-1}^{\mathbb{C}}} |x_1|\,d\sigma(x)
= \frac{\sqrt{\pi}}{2}   \frac{n!}{\Gamma(n + \frac{1}{2})}\,,
\end{equation*}
where $d\sigma$ stands for the normalized surface measure on the sphere  $\mathbb{S}_{n-1}^{\mathbb{C}}$
in $\mathbb{C}^n$, and
\begin{equation*}\label{grunbuschC-B}
\boldsymbol{\lambda}\big(\ell_1^n(\mathbb{C})\big)  = \int_{\mathbb{T}^n} \Big|\sum_{k=1}^{n} z_k\Big|\, dz
= \int_{0}^{\infty} \frac{1 -J_0(t)^n}{t^2} dt\,,
\end{equation*}
where $dz$ denotes the normalized Lebesgue measure on the distinguished boundary $\mathbb{T}^n$ in $\mathbb{C}^n$
and $J_0$ is the zero Bessel function defined by
$
J_0(t) = \frac{1}{2\pi} \int_{0}^{\infty} \cos( t \cos \varphi) d \varphi\,.
$
The corresponding real constants are different:
\begin{equation*}
 \boldsymbol{\lambda}\big(\ell_2^n(\mathbb{R})\big)  =  n \int_{\mathbb{S}_{n-1}^{\mathbb{R}}} |x_1|\,d\sigma
= \frac{2}{\sqrt{\pi}}   \frac{\Gamma(\frac{n+2}{2})}{\Gamma(\frac{n+1}{2})}
\end{equation*}
and
\begin{align}\label{grunbuschR}
\boldsymbol{\lambda}\big(\ell_1^n(\mathbb{R})
\big)  =
\begin{cases}
\boldsymbol{\lambda}\big(\ell_2^n(\mathbb{R})\big),  &  \text{$n$ odd}\\[2mm]
\boldsymbol{\lambda}(\ell_2^{n-1}(\mathbb{R})),  &  \text{$n$ even}\,.\\[2mm]
\end{cases}
\end{align}
Additionally, Gordon \cite{gordon} and Garling-Gordon \cite{garlinggordon} determined the asymptotic growth of
$\boldsymbol{\lambda}\big(\ell_p^n\big)$ for $1<p<\infty$ with $p\neq 1,2,\infty$ showing that
\begin{equation*} \label{gordongarling}
\boldsymbol{\lambda}\big(\ell^n_{p}\big) \sim n^{\min\big\{\frac{1}{2}, \frac{1}{p} \big\}}\,,
\end{equation*}
and K\"onig, Sch\"utt and Tomczak-Jagermann \cite{konig1999projection} proved that for  $1 \le p \le 2$
\begin{equation}\label{koenigschuetttomczak}
  \lim_{n\to \infty} \frac{\boldsymbol{\lambda}\big(\ell_p^n\big)}{\sqrt{n}} = \gamma\,,
\end{equation}
where $\gamma = \sqrt{\frac{2}{\pi}}$ in the real  and $\gamma= \frac{\sqrt{\pi}}{2}$ in the complex case.
For an extensive treatment on all of this and more see
\cite{tomczak1989banach}.

\section{Integral formula}
The following integral formula for the projection constant of  $\mathcal{B}_\mathcal{S} ^{N}$, where $\mathcal{S}$ is an arbitrary family of subsets in $[N]$, is fundamental for our purposes.

\begin{theorem} \label{bool-int}
For each family  $\mathcal{S}$ of subsets in $[N]$
\[
\boldsymbol{\lambda}\big(\mathcal{B}_\mathcal{S} ^{N}\big) = \mathbb{E}\big[    \big| \sum_{S \in \mathcal{S} } \chi_S \big| \big]\,.
\]
\end{theorem}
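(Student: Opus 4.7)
The plan is to carry out a version of Rudin's averaging technique in the Boolean-cube setting. Since $\mathcal{B}^N \cong \ell_\infty(\{-1,+1\}^N)$ is isometric to an $L_\infty(\mu)$-space (with $\mu$ the counting measure), formula \eqref{gammainfty} immediately reduces the task to computing the \emph{relative} projection constant, i.e.\ I would show
\[
\boldsymbol{\lambda}(\mathcal{B}_\mathcal{S}^N) \,=\, \boldsymbol{\lambda}(\mathcal{B}_\mathcal{S}^N, \mathcal{B}^N) \,=\, \mathbb{E}\bigl[\,|\textstyle\sum_{S\in\mathcal{S}} \chi_S|\,\bigr].
\]

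The key observation is that $G = \{-1,+1\}^N$ acts on itself by its group multiplication, giving isometries $\tau_t f(x) := f(tx)$ on $\mathcal{B}^N$ for each $t \in G$. Since $\tau_t\chi_S = \chi_S(t)\,\chi_S$, each $\tau_t$ preserves $\mathcal{B}_\mathcal{S}^N$. Given any bounded projection $P\colon \mathcal{B}^N \to \mathcal{B}_\mathcal{S}^N$, define the averaged operator
\[
\widetilde{P} := \mathbb{E}_{t\in G}\bigl[\tau_t\, P\, \tau_t\bigr].
\]
For $f\in \mathcal{B}_\mathcal{S}^N$ we have $\tau_t f \in \mathcal{B}_\mathcal{S}^N$, hence $P\tau_t f = \tau_t f$, and since $\tau_t^2 = \id$ in $G$, $\widetilde{P}f = f$. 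Thus $\widetilde{P}$ is again a projection onto $\mathcal{B}_\mathcal{S}^N$, with $\|\widetilde{P}\| \le \|P\|$. Using translation invariance of the Haar measure on $G$, a short computation shows that $\widetilde{P}$ commutes with every $\tau_s$, i.e., $\widetilde{P}$ is translation-invariant.

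Any translation-invariant operator on $\mathcal{B}^N$ acts diagonally on the Walsh basis, so as a projection onto $\mathcal{B}_\mathcal{S}^N$, its multiplier symbol must be the indicator $\mathbf{1}_\mathcal{S}$. This forces $\widetilde{P}$ to be the unique translation-invariant projection, namely convolution with the kernel $k := \sum_{S\in\mathcal{S}} \chi_S$. A standard $L_\infty \to L_\infty$ convolution calculation on $G$ (Young's inequality for the upper bound, testing against $f(x) = \operatorname{sign} k(x^{-1})$ for the lower bound) yields $\|\widetilde{P}\| = \|k\|_1 = \mathbb{E}[|\sum_{S\in\mathcal{S}}\chi_S|]$. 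Combining $\|P\| \ge \|\widetilde{P}\|$ for every projection $P$ with the fact that $\widetilde{P}$ itself is a projection onto $\mathcal{B}_\mathcal{S}^N$ gives the desired equality.

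The only subtlety I anticipate—though it is not really an obstacle, only a point requiring care—is verifying the translation-invariance of $\widetilde{P}$ and unpacking why invariance forces the multiplier to be exactly $\mathbf{1}_\mathcal{S}$. Both are standard for abelian harmonic analysis but need to be restated carefully for the Boolean-cube situation, where the Haar measure is the normalized counting measure on a finite group and the characters $\chi_S$ are real-valued; this also makes $\|k\|_1$ genuinely an expectation of an absolute value, as in the statement.
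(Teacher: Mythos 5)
Your argument is correct and is essentially the paper's own proof: the paper simply cites the compact abelian group version of this formula from \cite[Theorem~2.1]{defant2024projection}, which is Rudin's averaging technique, while you have unwound that reference and carried out the averaging directly on $\{-1,+1\}^N$. The use of \eqref{gammainfty} to reduce to the relative projection constant in $\mathcal{B}^N\cong\ell_\infty(\{-1,+1\}^N)$, the construction of $\widetilde{P}=\mathbb{E}_t[\tau_t P\tau_t]$, the identification of the resulting translation-invariant projection as convolution with $k=\sum_{S\in\mathcal{S}}\chi_S$, and the $L_\infty\to L_\infty$ operator norm $\|k\|_1$ are exactly the ingredients of the cited result specialized to the Boolean cube (where $\tau_t^{-1}=\tau_t$ and $x^{-1}=x$ make the bookkeeping particularly clean).
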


This immediately follows from a result regarding arbitrary compact abelian groups as presented in \cite[Theorem~2.1]{defant2024projection}, which finds its inspiration and roots in Rudin's work \cite{rudin1962projections} (also see \cite[Theorem III.B.13]{wojtaszczyk1996banach}).
In fact,  for
 a compact abelian group  $G$ (with Haar measure $\mathrm{m}$) and  a~finite set $E:=\{\gamma_1,\ldots, \gamma_N\}\subset \widehat{G}$   of
characters, we denote by $\text{Trig}_E(G)$ the finite dimensional Banach space of all trigonometric polynomials formed by the span of $E$  in $C(G)$. Then the projection $P\colon C(G) \to C(G)$, given by $Pf = \sum_{j=1}^N \widehat{f}(\gamma_j) \gamma_j$ for
all $f\in C(G)$, is a~minimal projection onto $\text{Trig}_E(G)$ and
\[
\boldsymbol{\lambda}\big(\text{Trig}_E(G)\big) = \big\|P\colon C(G)\to C(G)\big\|
= \int_G \Big|\sum_{j=1}^N \gamma_j(x)\Big|\,d\mathrm{m}(x)\,.
\]
Taking for $G$ the $N$-dimensional Boolean cube $\{-1, +1\}^N$ and recalling
that all its characters are given by the functions $\chi_S, \, S \subset [N]$, we see that Theorem~\ref{bool-int}
indeed is a very special case.

Clearly, by the Kadets-Snobar theorem (recall again \eqref{kadets1}) one has
\begin{equation}\label{KSKS}
1 \,\,\leq  \,\,\boldsymbol{\lambda}\big(\mathcal{B}^N_\mathcal{S} \big)  \,\,\leq  \,\, \sqrt{|\mathcal{S} |}\,.
\end{equation}
Note that this estimate is also a straight forward consequence of  Theorem~\ref{bool-int} and  the orthogonality of the
Fourier--Walsh functions $\chi_S$, since  both imply
\begin{align*}
\boldsymbol{\lambda}\big(\mathcal{B}_\mathcal{S} ^{N}\big) =\mathbb{E}\Big[\Big| \sum_{S \in \mathcal{S} } \chi_S \Big| \Big]\le
\Big( \mathbb{E}\Big[\Big| \sum_{S \in \mathcal{S} } \chi_S \big|^2 \Big]\Big)^{1/2} =  \sqrt{|\mathcal{S} |}\,.
\end{align*}
We  show that the estimates of \eqref{KSKS},  the upper as well as the lower one, are  attained. Indeed, the two possible
extreme cases for  \eqref{KSKS} are as follows: For the lower bound we take the family $\mathcal{S}$ consisting of all possible  subsets
of $[N]$, and for the upper bound any family  $\mathcal{S}$ of  one-point sets only.
In the first case the identity from \eqref{identitiesI} gives that for all $N$
\begin{equation}\label{caseA}
  \boldsymbol{\lambda}\big(\mathcal{B}^{N}
\big) = 1\,,
\end{equation}
and in the second case  by Kintchine's inequality (as proved by Szarek in \cite{szarek1976best})
\begin{equation}\label{caseB}
  \frac{1}{\sqrt{2}}  \sqrt{|\mathcal{S}|}  \leq \  \mathbb{E}\Big[ \big|\sum_{\{x\} \in \mathcal{S}}   \chi_{\{x\}}  \big|  \Big]\leq \sqrt{|\mathcal{S}|}\,.
\end{equation}
For the latter case of  one-point sets we have a more precise formula.
Indeed, for each family $\mathcal{S}$ of one-point sets of $[N]$ by Theorem~\ref{bool-int} and an integral form of Rademacher averages proved by Haagerup in \cite{haagerup1981best} one has
\begin{equation} \label{caseBB}
\boldsymbol{\lambda}\big(\mathcal{B}_\mathcal{S} ^{N}\big)= \frac{2}{\pi}\,
\int_0^{\infty} t^{-2}\bigg(1 - \prod_{k=1}^{|\mathcal{S}|} \cos t\bigg)\,dt\,.
\end{equation}
Moreover, if  $(\mathcal{S}_N)_{N\in \mathbb N}$ is a sequence of support sets which are finite and consists only of one-point sets  in $\mathbb N$ such that the cardinality 
 $|\mathcal{S}_N| \to \infty$ as  $N \to \infty$, then  
\begin{equation} \label{caseBBB}
\lim_{N \to \infty}  \frac{\boldsymbol{\lambda}\big(\mathcal{B}_{\mathcal{S}_N}^{N}\big)}{\sqrt{|\mathcal{S}_N|}} = \sqrt{\frac{2}{\pi}}\,.
\end{equation}
This follows by a  standard duality argument, since  the mapping
\[
\mathcal{B}_{\mathcal{S}_N}^{N} \ni \sum_{\{i_k\}\in \mathcal{S}_N } \alpha_k \chi_{\{i_k\}} \mapsto (\alpha_k)_{k=1}^N
\]
is a~linear isometric isomorphism of $\mathcal{B}_{\mathcal{S}_N}^{N}$ onto $\ell_1^{|\mathcal{S}_N|}$, and then  the conclusion
follows from the case $p=1$ in~\eqref{koenigschuetttomczak}.

The  following consequences of Theorem~\ref{bool-int} collect a few extensions of~\eqref{caseB}.
The substitute for Kintchine's inequality in the case $d=1$ we need, is the following hypercontractivity  estimate for homogeneous functions on Boolean
cubes  due to Ivanisvili and Tkocz \cite[Theorem 2]{ivanisvili2019comparison} which shows that, for $d>1$ and every $f\in  \mathcal{B}_{=d}^{N}$,
\begin{equation}\label{homogeneousweissler}
\|f\|_{L_2(\{-1,+1\}^N)} \leq  e^{\frac{d}{2}} \|f\|_{L_1(\{-1,+1\}^N)}\,.
\end{equation}
More generally,  in \cite[Theorem 13]{eskenazis2020polynomial} Eskenazis and Ivanisvili showed that for every $f \in \mathcal{B}_{\leq d}^{N}$
\begin{equation}\label{weissler}
\|f\|_{L_2(\{-1,+1\}^N)} \leq  (2.69076)^d \|f\|_{L_1(\{-1,+1\}^N)}\,.
\end{equation}

The previous two inequalities are recent improvements of the classical hypercontractive Bonami-Weissler inequality for the Boolean cube 
(see e.g., \cite[Theorem 9.22]{o2014analysis}).

Combining Theorem~\ref{bool-int}  with  \eqref{weissler} and \eqref{homogeneousweissler} gives the following extension of ~\eqref{caseB}.

\begin{corollary}\label{kiel}
Let   $\mathcal{S}  \subset 2^{[N]}$. 
If $|S|= d$
for all $S \in \mathcal{S}$, then 
\[
\frac{1}{e^{\frac{d}{2}}} \sqrt{|\mathcal{S} |}\,\,\leq\,\,\boldsymbol{\lambda}\big(\mathcal{B}_{\mathcal{S}}^{N}\big) \,\,\leq\,\, \,\sqrt{|\mathcal{S}|}\,,\]
and if  $|S|\leq d$
for all $S \in \mathcal{S}$, then
\[
\frac{1}{(2.69076)^d} \sqrt{|\mathcal{S} |}\,\,\leq\,\,\boldsymbol{\lambda}\big(\mathcal{B}_{\mathcal{S}}^{N}\big) \,\,\leq\,\, \,\sqrt{|\mathcal{S}|}\,.\]
\end{corollary}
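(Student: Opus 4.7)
The upper bounds are already recorded in \eqref{KSKS}, so the task reduces to establishing the two lower bounds. The plan is to realize $\boldsymbol{\lambda}(\mathcal{B}_{\mathcal{S}}^N)$ as an $L_1$-norm via Theorem~\ref{bool-int}, pair it with the obvious $L_2$-norm coming from the orthonormality of the Walsh system, and then reverse the trivial $L_1 \leq L_2$ inequality by applying the two hypercontractive estimates \eqref{homogeneousweissler} and \eqref{weissler} already highlighted in the text.

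Concretely, set
\[
f := \sum_{S \in \mathcal{S}} \chi_S \in \mathcal{B}_{\mathcal{S}}^{N}.
\]
Theorem~\ref{bool-int} gives $\boldsymbol{\lambda}(\mathcal{B}_{\mathcal{S}}^N) = \|f\|_{L_1(\{-1,+1\}^N)}$, while orthonormality of the Walsh characters yields $\|f\|_{L_2(\{-1,+1\}^N)} = \sqrt{|\mathcal{S}|}$.

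For the first case, since every $S \in \mathcal{S}$ has cardinality exactly $d$, the function $f$ lies in $\mathcal{B}_{=d}^N$, so \eqref{homogeneousweissler} applies and yields
\[
\sqrt{|\mathcal{S}|} \,=\, \|f\|_{L_2} \,\leq\, e^{d/2}\,\|f\|_{L_1} \,=\, e^{d/2}\,\boldsymbol{\lambda}(\mathcal{B}_{\mathcal{S}}^N).
\]
For the second case, $f$ has degree at most $d$, hence lies in $\mathcal{B}_{\leq d}^N$, and an identical argument using \eqref{weissler} produces the factor $(2.69076)^d$ in place of $e^{d/2}$.

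There is no real obstacle; the only subtlety is recognizing that the relevant function to plug into the hypercontractive inequalities is precisely the ``character sum'' $f = \sum_{S \in \mathcal{S}} \chi_S$, which by Theorem~\ref{bool-int} is exactly the object whose $L_1$-norm equals the projection constant we wish to estimate from below. Everything else is a one-line consequence of Cauchy--Schwarz reversed by hypercontractivity.
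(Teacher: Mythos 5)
Your argument is exactly the paper's: write $\boldsymbol{\lambda}(\mathcal{B}_{\mathcal{S}}^N)$ as $\|\sum_{S\in\mathcal{S}}\chi_S\|_{L_1}$ via Theorem~\ref{bool-int}, use orthonormality for the $L_2$-norm, and invoke \eqref{homogeneousweissler} resp.\ \eqref{weissler} to reverse $L_1\le L_2$. The proposal is correct and matches the paper's proof.
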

Calculating cardinalities, yields more concrete estimates for
$\mathcal{B}_{ = d}^{N}$ and~$\mathcal{B}_{\leq d}^{N}$:
For  $1 \leq d \leq N$ 
\begin{equation} \label{peskowA}
\frac{1}{e^{\frac{d}{2}}}  \,  \binom{N}{d}^{\frac{1}{2}}
\, \, \leq \,\, \boldsymbol{\lambda}\big(\mathcal{B}_{=d}^{N}\big) \,\,\leq\,\,
\binom{N}{d}^{\frac{1}{2}}
\end{equation}
\begin{equation} \label{peskowB}
\frac{1}{(2.69076)^d}\,  \left( \sum_{k=0}^d \binom{N}{k}\right)^{\frac{1}{2}}
\, \, \leq \,\, \boldsymbol{\lambda}\big(\mathcal{B}_{\le d}^N\big) \,\,\leq\,\, \left( \sum_{k=0}^d \binom{N}{k}\right)^{\frac{1}{2}}\,,
\end{equation}
and as a consequence
\begin{equation} \label{mariupol}    
\frac{1}{e^{\frac{d}{2}}}
\bigg(\frac{N}{d}\bigg)^{\frac{d}{2}}\,\leq\,\boldsymbol{\lambda}\big(\mathcal{B}^N_{= d}\big) \,\leq\,
e^{\frac{d}{2}}\bigg(\frac{N}{d}\bigg)^{\frac{d}{2}}
\end{equation}
\begin{equation} \label{mariupol2}    
\frac{1}{(2.69076)^{d}}
\bigg(\frac{N}{d}\bigg)^{\frac{d}{2}}\,\leq\,\boldsymbol{\lambda}\big(\mathcal{B}^N_{\leq d}\big) \,\leq\,
e^{\frac{d}{2}}\bigg(\frac{N}{d}\bigg)^{\frac{d}{2}}\,.
\end{equation}
Indeed, the preceding two estimates  follow  immediately from~\eqref{peskowA} 
and~\eqref{peskowB}  taking into account the  elementary estimates
\begin{align} \label{ukraineA}
\Big(\frac{N}{d}\Big)^d \leq \binom{N}{d}\,,
\end{align}
\begin{equation} \label{ukraineAA}
\binom{N}{d}
\leq
\sum_{k=0}^d \binom{N}{k} \leq \sum_{k=0}^d  \frac{N^k}{k!} = \sum_{k=0}^d  \frac{d^k}{k!} \Big(\frac{N}{d}\Big)^k
\leq e^d \Big(\frac{N}{d}\Big)^d\,.
\end{equation}

Note that applying a~remarkable formula due to~McKay \cite{mckay1989littlewood}, we have (see also \cite[Lemma 5.7]{defant2018bohrBoolean}):
For each $N \in \N$ and each $0 \leq \alpha <N$
with $N - \alpha$ being an odd integer, there exists $0 \leq c_{\alpha,N} \leq \sqrt{\pi/2}$ such that
\[
\sum_{k \leq \frac{N - \alpha - 1}{2}}{\binom{N}{k}}
= \sqrt{N}\,\binom{N-1}{\frac{N - \alpha -1}{2}}\,Y\left( \frac{\alpha + 1}{\sqrt{N}} \right)
\,\exp{\left(  \frac{c_{\alpha, N}}{\sqrt{N}} \right)}\,,
\]
where $Y$ is given by
\[
Y(x) = e^{\frac{x^{2}}{2}} \int_{x}^{\infty}e^{-\frac{t^{2}}{2}}\,dt, \quad\, x\geq 0\,.
\]
In particular,  taking $\alpha =0$, we obtain a~nice asymptotic formula for $\sum_{k=0}^{d}\binom{N}{k}$,
whenever $N$ is odd and $d = \frac{N-1}{2}$.

It worth noting that the following estimates hold (see \cite[Proposition 3]{szarek1999nonsymmetric})
\[
\frac{2}{x + (x^{2} + 4)^{1/2}} \leq Y(x) \leq \frac{4}{3x + (x^{2} + 8)^{1/2}}, \quad\, x\geq 0\,.
\]
We conclude with an observation showing  that the upper bound in \eqref{ukraineAA} can be improved as follows whenever
$2d-1 < N$:
\begin{equation} \label{Desigforo}
\sum_{k=0}^d \binom{N}{k} \le {N \choose d} \frac{N-(d-1)}{N-(2d-1)}\,.
\end{equation}
 In fact, we have
\[
\frac{1}{{N \choose d}}\Bigg({N \choose d} + {N \choose d-1} +  {N \choose d-2} +\,\cdots\, + 1\Bigg)
= 1 + \frac{d}{N-d+1} + \frac{d(d-1)}{ (N-d+1)(N-d+2)} + \,\cdots\,.
\]
Thus bounding  the right-hand side from above by the geometric series
\[1 + \frac{d}{N-d+1} + \left( \frac{d} {N-d+1} \right)^2
+ \, \cdots\,=\, \frac{N-(d-1)}{N - (2d-1)}\,,
\]
we get the required estimate \eqref{Desigforo}.

In view of the preceding estimates for the  projection constants of
$\mathcal{B}_{=d}^{N}$ and $\mathcal{B}_{\le d}^{N}$, we add another useful  result comparing both constants.

\begin{proposition}
For every $N,d \in \mathbb{N}$ with $d \leq N$
\[
\boldsymbol{\lambda}
\big(\mathcal{B}_{=d}^{N}\big) \le (1+\sqrt{2})^d  \boldsymbol{\lambda}
\big(\mathcal{B}_{\le d}^{N}\big)\,.
\]
\end{proposition}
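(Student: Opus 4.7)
The plan is to exploit an ideal-type inequality for projection constants: whenever $X \subset Y$ admits a bounded projection $P\colon Y \to X$, one has $\boldsymbol{\lambda}(X) \leq \|P\|\, \boldsymbol{\lambda}(Y)$. This follows by writing $\id_X = P \circ \id_Y \circ i$, where $i\colon X \hookrightarrow Y$ is the isometric inclusion, and invoking \eqref{gammainfty} together with the usual ideal property of the $\gamma_\infty$-norm. Since $\mathcal{B}^N_{=d}$ is an isometric subspace of $\mathcal{B}^N_{\leq d}$, everything reduces to producing a projection $P_d\colon \mathcal{B}^N_{\leq d} \to \mathcal{B}^N_{=d}$ of norm at most $(1+\sqrt{2})^d$.

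The natural candidate is the Fourier--Walsh projection $P_d f := \sum_{|S|=d} \widehat{f}(S)\, \chi_S$, which extracts the $d$-homogeneous component of $f$. To estimate $\|P_d\|$, identify $f \in \mathcal{B}^N_{\leq d}$ with its tetrahedral polynomial $P_f$ on $[-1,1]^N$ and recall that $\|f\|_\infty = \|P_f\|_{[-1,1]^N}$. For a fixed $x \in \{-1,+1\}^N$, consider the one-variable polynomial
\[
q_x(t) := P_f(tx_1,\dots,tx_N) = \sum_{k=0}^{d} t^k\, f_k(x), \qquad t \in [-1,1],
\]
where $f_k$ is the $k$-homogeneous component of $f$. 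Since $(tx_1,\dots,tx_N) \in [-1,1]^N$ for every $t \in [-1,1]$, one has $|q_x(t)| \leq \|f\|_\infty$ throughout $[-1,1]$, and the leading coefficient of $q_x$ is exactly $f_d(x) = (P_d f)(x)$.

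The crux is then the classical Chebyshev extremal bound: the leading coefficient of any real polynomial of degree at most $d$ with supremum norm $\leq M$ on $[-1,1]$ has absolute value at most $2^{d-1} M$, the extremal case being the Chebyshev polynomial $T_d$. Applied to $q_x$, this yields $|f_d(x)| \leq 2^{d-1} \|f\|_\infty$ for every $x \in \{-1,+1\}^N$, so $\|P_d\| \leq 2^{d-1}$. Combining with the first step and using the trivial inequality $2^{d-1} \leq (1+\sqrt{2})^d$ (since $1+\sqrt{2}>2$), the claimed estimate follows. The only subtle step is the observation that the problem localizes at each $x$ to a one-dimensional Chebyshev-type extremum problem via the dilation $t \mapsto tx$; once this is in place, the remainder is routine.
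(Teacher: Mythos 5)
Your proof is correct, and it follows the same high-level template as the paper (reduce to bounding the Fourier--Walsh projection $P_d\colon \mathcal{B}^N_{\leq d}\to \mathcal{B}^N_{=d}$ and then apply the ideal property of $\gamma_\infty$), but the key estimate is obtained by a genuinely different and more elementary argument. The paper invokes Klimek's pluripotential-theoretic result, quoted in \eqref{klimek}, which gives the uniform bound $\|f_k\|_\infty \le (1+\sqrt 2)^d \|f\|_\infty$ for every $0\le k\le d$ and hence $\|P_d\|\le (1+\sqrt 2)^d$. You instead restrict $P_f$ to the rays $t\mapsto tx$, $x\in\{-1,+1\}^N$, reduce to a one-variable extremal problem, and apply the classical Chebyshev bound on the leading coefficient of a degree-$d$ polynomial with sup-norm $\le M$ on $[-1,1]$. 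This localization is valid: $q_x(t)=\sum_{k=0}^d t^k f_k(x)$ since $\chi_S(tx)=t^{|S|}\chi_S(x)$, and $|q_x(t)|\le\|f\|_\infty$ because $tx\in[-1,1]^N$. What your route buys is a strictly sharper constant: you get $\|P_d\|\le 2^{d-1}$, whereas the statement as written only claims $(1+\sqrt 2)^d$; since $2^{d-1}\le (1+\sqrt 2)^d$, your argument actually proves a stronger inequality, namely $\boldsymbol{\lambda}(\mathcal{B}^N_{=d})\le 2^{d-1}\,\boldsymbol{\lambda}(\mathcal{B}^N_{\le d})$. The only caveat worth noting is that the Chebyshev argument is specific to extracting the \emph{top} homogeneous component; Klimek's result gives the same $(1+\sqrt 2)^d$ bound for every $f_k$, which the paper uses elsewhere (e.g., in the proof of Theorem~\ref{gl_versus_proj}), so the two tools are not interchangeable in general, only for this particular proposition.
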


\begin{proof}
Indeed, this follows   from an  important  fact proved by Klimek in  \cite{klimek1995metrics}
(see also \cite[Lemma1,(iv)]{defant2019fourier}): If $f \in \mathcal{B}_{\leq d}^{N} $ and
$f_k = \sum_{|S|=k} \hat{f}(S) \chi_S$ is the $k$-homogeneous part of $f$  for $0 \leq k \leq d$,
then
\begin{equation}\label{klimek}
\|f_k\|_\infty \leq (1+\sqrt{2})^d \|f\|_\infty \,. \qedhere
\end{equation}
\end{proof}

We finish with two more remarks on  Theorem~\ref{bool-int}, which have a number theoretical flavour.
The first one is
\begin{equation} \label{prime1}
\lim_{N \to \infty}\frac{\boldsymbol{\lambda}\big(\mathcal{B}^N_{\mathcal{P}_N}\big)}{ \sqrt{\frac{N}{\log N}}} = \sqrt{\frac{2}{\pi}}\,,
\end{equation}
where $\mathcal{P}_N$ stands the family of one-point sets $\{p\}$ generated by all  primes~$p\leq~N$.
Of course this is an immediate consequence of the prime number theorem and~\eqref{caseBBB}
applied to the sequence $(p_k)$ of all primes.

For the second remark  we denote by $\mathcal{P}_N^{sf}$ the family of all
finite subsets $A$ of primes in  $[N]$ such that $n = \prod_{p \in A} p \leq N$ (the prime number decomposition of $n$
is square-free). Observe, that since every
$n \in [N]$ has a~unique prime number decomposition, there is a one to one correspondence between the set of all square-free numbers $n \in [N]$ and $\mathcal{P}_N^{sf}$.

Based on a~recent deep result of Harper from \cite{Harper}, for large integers $N$ with universal constants 
\begin{equation} \label{prime2}
\boldsymbol{\lambda}\Big(\mathcal{B}^N_{\mathcal{P}_N^{sf}}\Big)
\,\sim\,\frac{\sqrt{N}}{(\log \log N)^{\frac{1}{4}}}\,.
\end{equation}
For the proof  note that all projections 
$\chi_{\{k\}}\colon \{-1,+1\}^{\mathbb{N}} \to \mathbb{R},\, k\in \mathbb{N}$ form a sequence of  independent
 Rademacher random variables. Moreover, every square-free number $n \in [N]$ defines a random variable
 $f_n \colon \{-1,+1\}^{\mathbb{N}} \to \mathbb{R}$
given by
\[
f_n = \prod_{\substack{p|n\\ p \,\text{prime}}} \chi_{\{p\}}\,.
\]
 Then Harper's result mentioned above states that
\[
\mathbb{E}  \Big| \sum_{\substack{1 \leq n \leq N\\  \text{$n$ square-free}}} f_n\Big|
\sim  \frac{\sqrt{N}}{(\log \log N)^{\frac{1}{4}}} \,.
\]
But since the one to one correspondence between the square-free numbers $n \in [N]$
and the family of sets $A \in \mathcal{P}_N^{sf}$ identifies the random variables
$f_n$ and $\chi_A$, we see that
\[
\mathbb{E}  \Big| \sum_{\substack{1 \leq n \leq N\\  \text{$n$ square-free}}}  f_n\Big|
= \mathbb{E}  \Big| \sum _{A \in \mathcal{S}} \chi_ A\Big|\,.
\]
 Consequently, Theorem~\ref{bool-int}  finishes the argument for \eqref{prime2}.

\section{The limit case}
The spaces $\mathcal{B}_{=1}^N$ and $\ell_1^N(\mathbb{R})$ identify as Banach spaces
whenever we interpret the $N$-tuple $\sum_{k=1}^N x_k e_k$ as the function $\sum_{k=1}^N x_k \chi_{\{k\}}$
on the $N$-dimensional Boolean cube.
Then by the result of Gr\"unbaum mentioned in~\eqref{grunbuschR}  we know that
\[
\lim_{N \to \infty} \frac{\boldsymbol{\lambda}\big(\mathcal{B}_{=1}^N\big)}{\sqrt{N}}  =
\lim_{N \to \infty} \frac{\boldsymbol{\lambda}\big(\ell_1^N(\mathbb{R})\big)}{\sqrt{N}}
= \sqrt{\frac{2}{\pi }}\,.
 \]
  In the following we show a procedure that allows to extend this result to  $\mathcal{B}_{=d}^N$ and $\mathcal{B}_{\leq d}^N$,
  where the degree $d$ is arbitrary.
Our main result here is as follows.

\begin{theorem}\label{thm: limit bool homog}
For each $d \in \NN$,
\begin{equation}\label{limform}
\dis\lim_{N \to \infty} \frac{\boldsymbol{\lambda}(\mathcal{B}_{=d}^N)}{N^{d/2}} = \frac{1}{\sqrt{2 \pi}} \int_{-\infty}^{\infty} |P_d(t)|e^{-\frac{t^2}{2}} dt\,,
\end{equation}
where  $P_d(t) = \frac{t^d}{d!} - \dis\sum_{k = 1}^{\floor{d/2}} \frac{1}{k!2^k} P_{d-2k}(t) $
with $P_0(t) = 1$, $P_1(t) = t$. Moreover,
$$
\dis\lim_{N \to \infty} \frac{\boldsymbol{\lambda}(\mathcal{B}_{=d}^N)}{N^{d/2}} = \dis\lim_{N \to \infty} \frac{\boldsymbol{\lambda}(\mathcal{B}_{\le d}^N)}{N^{d/2}}\,.
$$
\end{theorem}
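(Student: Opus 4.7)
The plan is to combine Theorem~\ref{bool-int} with Newton's identities and the classical central limit theorem. By Theorem~\ref{bool-int},
$$
\frac{\boldsymbol{\lambda}(\mathcal{B}_{=d}^N)}{N^{d/2}}=\mathbb{E}\,|\tilde e_d|,\qquad \tilde e_d:=\frac{1}{N^{d/2}}\sum_{|S|=d}\chi_S,
$$
so $\tilde e_d$ is $N^{-d/2}$ times the $d$-th elementary symmetric polynomial of independent Rademacher variables $x_1,\dots,x_N$. Because $x_i^2=1$, the power sums $p_k=\sum_i x_i^k$ satisfy $p_k=S_N:=\sum_i x_i$ for odd $k$ and $p_k=N$ for even $k$. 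Substituting these into Newton's identity $d\,e_d=\sum_{k=1}^d(-1)^{k-1}e_{d-k}p_k$, normalising by $N^{d/2}$, and setting $T_N:=S_N/\sqrt{N}$, a short computation yields
$$
d\,\tilde e_d\,=\,T_N\,\tilde e_{d-1}-\tilde e_{d-2}+O(N^{-1/2}),
$$
where the error collects the contributions of the $p_k$ with $k\geq 3$, each carrying an extra factor of $N^{-1/2}$ or smaller.

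By induction on $d$ this forces $\tilde e_d$ to converge in distribution to $P_d(Z)$, where $Z\sim\mathcal{N}(0,1)$ (via the CLT applied to $T_N$) and the limit polynomials obey the clean recursion $d\,P_d(t)=t\,P_{d-1}(t)-P_{d-2}(t)$ with $P_0=1$, $P_1=t$. This is precisely the recursion for $He_d/d!$, where $He_d$ is the probabilists' Hermite polynomial, so $d!\,P_d=h_d$. A short manipulation with the generating function $\sum_d P_d(t)s^d=\exp(st-s^2/2)$ (or equivalently a direct induction on $d$) verifies that these $P_d$ coincide with the ones given by the closed-form recursion in the statement. To pass from convergence in distribution of $\tilde e_d$ to convergence of $\mathbb{E}|\tilde e_d|$, I would use uniform integrability: orthonormality of the Walsh characters gives
$$
\mathbb{E}\,\tilde e_d^{\,2}=\frac{1}{N^d}\binom{N}{d}\leq\frac{1}{d!},
$$
so the family $(\tilde e_d)_N$ is $L^2$-bounded in $N$, and combined with convergence in distribution this yields $\mathbb{E}|\tilde e_d|\to\mathbb{E}|P_d(Z)|$, i.e.\ the integral displayed in the theorem.

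For the second claim, Theorem~\ref{bool-int} gives
$$
\frac{\boldsymbol{\lambda}(\mathcal{B}_{\leq d}^N)}{N^{d/2}}=\mathbb{E}\Big|\tilde e_d+\sum_{k=0}^{d-1}N^{-(d-k)/2}\,\tilde e_k\Big|,
$$
and each correction term satisfies $\|N^{-(d-k)/2}\tilde e_k\|_{L^2}\leq N^{-(d-k)/2}/\sqrt{k!}\to 0$, so the bracket still converges in $L^1$ to $P_d(Z)$; hence the two normalised limits coincide. The main technical obstacle I expect is the passage from distributional to $L^1$ convergence: the $L^2$ bound above handles uniform integrability in a single line, but the bookkeeping of Newton's recursion has to be done carefully so that the error term is genuinely $o(1)$ in $L^2$ (and not merely in probability), and so that the induction on $d$ closes cleanly after the normalisation.
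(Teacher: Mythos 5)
Your proposal is correct, and it takes a genuinely different route from the paper's. The paper's first proof hinges on a combinatorial decomposition of multi-indices $\alpha\in\Lambda(d,N)$ into a tetrahedral part $\alpha_T$ and an even part $\alpha_E$, leading to the identity
\[
\sum_{\alpha\in\Lambda_T(d,N)}x^\alpha=\frac{1}{d!}\Bigl[\Bigl(\sum_{i}x_i\Bigr)^d-\sum_{k\ge1}C_{d,k,N}\sum_{\alpha_T\in\Lambda_T(d-2k,N)}x^{\alpha_T}\Bigr],
\]
and a careful estimate of the coefficients $C_{d,k,N}$ (Lemma~\ref{lem: lim coef homog bool}); the paper's second proof quotes Beckner's Hermite-polynomial identity as a black box. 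You instead invoke Newton's identities for elementary symmetric polynomials, use that on the cube the power sums collapse to $p_k=S_N$ (odd $k$) and $p_k=N$ (even $k$), and normalise to get $d\,\tilde e_d=T_N\tilde e_{d-1}-\tilde e_{d-2}+O(N^{-1})$. This is shorter, more self-contained, and produces the three-term Hermite recurrence $d\,P_d=tP_{d-1}-P_{d-2}$ directly rather than the closed form in the statement (the two being equivalent, as the paper also verifies around \eqref{eqPdandHermite}). The uniform-integrability step and the $\mathcal{B}_{\le d}^N$ comparison are handled exactly as in the paper. One refinement to make the induction airtight: since $T_N$ and $\tilde e_{d-1}$ are both functions of the same $x_i$'s, convergence in distribution of $\tilde e_{d-1}$ alone is not enough to control the product $T_N\tilde e_{d-1}$; you should carry the stronger inductive hypothesis $\tilde e_{k}-P_k(T_N)\overset{P}{\longrightarrow}0$ (this is exactly what the paper's Lemma~\ref{lem: boolean polynomial rewriting} records), from which Slutsky and tightness of $T_N$ give $T_N\tilde e_{d-1}-T_N P_{d-1}(T_N)\overset{P}{\longrightarrow}0$ and the induction closes. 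Your closing caveat shows you already sensed this; with that tightening, the argument is a clean and arguably more transparent alternative to the paper's combinatorial lemma.
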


\smallskip

The following observation is important, since it helps  to understand the preceding result  in a~larger context:
For each $d \in \mathbb{N}_0$, we have that 
\begin{equation} \label{eqPdandHermite}
P_d=\frac{h_d}{d!}\,,
\end{equation}
where $h_n$ for each $n \in \mathbb{N}_0$ denotes the $n$-th (probabilist's) Hermite polynomial  given by
\[
h_n(t) := (-1)^n e^{\frac{t^2}{2}} \frac{d^n}{dt^n} e^{-\frac{t^2}{2}}, \quad\, t \in \mathbb{R}\,.
\]
In order to prove this, note first that for $d \in \{0, 1\}$  this equality holds trivially. For arbitrary $d$'s  we use induction. Suppose that $P_k=\frac{h_k}{k!}$ for every $0 \leq k \leq d-1$, and let us show that $P_{d}=\frac{h_{d}}{{d}!}$.
By the so-called 'inverse explicit expression for Hermite polynomials' we know that for all $t\in \mathbb{R}$
\[
t^d = d! \dis\sum_{k = 0}^{\floor{d/2}} \frac{1}{k!\,2^k}\,\frac{h_{d-2k}(t)}{(d-2k)!} = d! \dis\sum_{k = 1}^{\floor{d/2}} \frac{1}{k!\,2^k}\,\frac{h_{d-2k}(t)}{(d-2k)!} + h_{d}(t) \,.
\]
Thus, by the inductive hypothesis,
\[
\frac{h_d(t)}{d!} = \frac{t^d}{d!} -  \dis\sum_{k = 1}^{\floor{d/2}} \frac{1}{k!\,2^k}\,\frac{h_{d-2k}(t)}{(d-2k)!} = \frac{t^d}{d!} - \dis\sum_{k = 1}^{\floor{d/2}} \frac{1}{k!2^k} P_{d-2k}(t)  =  P_d(t)\,.
\]
Note that  from~\eqref{peskowA} for
 $\mathcal{S}= \{S \subset [N]:\, \text{card}(S) = d\}$  we have
\[
\frac{1}{e^{\frac{d}{2}}}
\,\,\leq\,\,
\liminf_{N \to \infty} \frac{ \boldsymbol{\lambda}\big(\mathcal{B}_{\mathcal{S}}^N\big)}{\sqrt{\dim( \mathcal{B}_{\mathcal{S}}^N)}}
\,\,\leq\,\,
\limsup_{N \to \infty} \frac{ \boldsymbol{\lambda}\big(\mathcal{B}_{\mathcal{S}}^N\big)}{\sqrt{\dim( \mathcal{B}_{\mathcal{S}}^N)}}
\,\,\leq\,\,
1\,,
\]
and in  the case $\mathcal{S}= \{S\subset [N]: \, \text{card}(S) \leq d\}$ the constant $e^{-\frac{d}{2}}$ has to be changed by  
$(2.69076)^{-d}$.
The following result is a  considerable improvement.

\begin{corollary}   
For $\mathcal{S}= \{S\subset [N]:\, \text{card}(S) = d\}$ or $\mathcal{S}= \{S\subset [N]: \, \text{card}(S) \leq d\}$
\begin{equation*} 
\lim_{N \to \infty} \frac{ \boldsymbol{\lambda}\big(\mathcal{B}_{\mathcal{S}}^N\big)}{\sqrt{\dim( \mathcal{B}_{\mathcal{S}}^N)}}  
\,\,=\,\,
\frac{1}{\sqrt{2 \pi }} \int_{-\infty}^{\infty} \frac{|h_d(t)|}{\sqrt{d!}}e^{-\frac{t^2}{2}}\,dt
\,= \,
\frac{2^{7/4}}{\pi^{5/4}} \frac{1}{d^{1/4} }\,\bigg(1 + O\Big(\frac{1}{d^2}\Big)\bigg)\,.
\end{equation*}
\end{corollary}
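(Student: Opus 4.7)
The plan is to derive both equalities from Theorem~\ref{thm: limit bool homog} and the identity $P_d=h_d/d!$ in~\eqref{eqPdandHermite}, supplemented by a dimension calculation in $N$ and an $L^1$-asymptotic for Hermite polynomials in $d$.

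For the first equality, I note that $\dim(\mathcal{B}_{=d}^N)=\binom{N}{d}$ and $\dim(\mathcal{B}_{\leq d}^N)=\sum_{k=0}^{d}\binom{N}{k}$. For fixed $d$ and $N\to\infty$, both satisfy $\sqrt{\dim(\mathcal{B}_{\mathcal{S}}^N)}/N^{d/2}\to 1/\sqrt{d!}$, since $\binom{N}{d}\sim N^d/d!$ and the lower-order binomials $\binom{N}{k}$ with $k<d$ are negligible in comparison (using the estimates already recorded in \eqref{ukraineA}--\eqref{ukraineAA}). Dividing the limit in Theorem~\ref{thm: limit bool homog} by this ratio and substituting $P_d=h_d/d!$ yields the first claimed equality in both cases, the factor $\sqrt{d!}$ neatly absorbing $|P_d|=|h_d|/d!$ into $|h_d|/\sqrt{d!}$.

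For the second equality, I need the $d\to\infty$ asymptotic of the Gaussian $L^1$-norm
\[
J_d:=\frac{1}{\sqrt{2\pi}}\int_{-\infty}^{\infty}\frac{|h_d(t)|}{\sqrt{d!}}\,e^{-t^2/2}\,dt
\]
of the $L^2$-normalized probabilist's Hermite polynomial. My strategy is to apply Plancherel--Rotach asymptotics: on the oscillatory bulk $t\in[-2\sqrt{d},2\sqrt{d}]$ the function $h_d(t)e^{-t^2/4}$ admits a uniform expansion with amplitude of order $\sqrt{d!}\,d^{-1/4}$, while the Airy/tail regions outside this interval contribute negligibly to the integral. After the substitution $t=2\sqrt{d}\cos\phi$ and replacing $|\sin(\text{fast phase})|$ by its mean value $2/\pi$, the leading-order computation reduces to the Beta integral $\int_0^{\pi}(\sin\phi)^{1/2}d\phi=\sqrt{\pi}\,\Gamma(3/4)/\Gamma(5/4)$, and elementary gamma-function identities then collect all constants into the prefactor $2^{7/4}/\pi^{5/4}$.

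The main obstacle is sharpening the error term to the claimed $O(d^{-2})$ rather than the naive $O(d^{-1})$. I plan to push the Plancherel--Rotach expansion to next order and exploit the parity $h_d(-t)=(-1)^d h_d(t)$, which makes $|h_d|\,e^{-t^2/2}$ an even function and forces the first odd-order correction in the inverse-$d$ expansion to integrate to zero; the remaining even corrections begin at order $d^{-2}$, yielding the announced relative error. As an alternative route, one can combine the closed formula in the statement of Theorem~\ref{thm: limit bool homog} with the sharp Grünbaum/König--Schütt--Tomczak-Jaegermann type $L^1$-asymptotics analogous to~\eqref{koenigschuetttomczak}, which supply a complete expansion in inverse powers of $d$ directly.
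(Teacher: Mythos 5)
Your handling of the first equality coincides with the paper's: both of you divide the conclusion of Theorem~\ref{thm: limit bool homog} by $\lim_{N\to\infty}\sqrt{\dim(\mathcal{B}_{\mathcal{S}}^N)}/N^{d/2}=1/\sqrt{d!}$ (where for $\mathcal{S}=\{|S|\leq d\}$ the paper actually uses the sharper two-sided bound \eqref{Desigforo} rather than \eqref{ukraineA}--\eqref{ukraineAA}, but that is a cosmetic difference) and substitute $P_d=h_d/d!$ from \eqref{eqPdandHermite}. This part is fine.

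For the second equality, however, there is a genuine gap. The paper does not prove the asymptotic
\[
\frac{1}{\sqrt{2\pi}}\int_{-\infty}^{\infty}|h_d(t)|\,e^{-t^2/2}\,dt
=\frac{2^{7/4}}{\pi^{5/4}}\,\frac{\sqrt{d!}}{d^{1/4}}\Bigl(1+O\bigl(d^{-2}\bigr)\Bigr);
\]
it simply cites Larsson-Cohn \cite[Remark~2.6 and~3.2]{larsson2002p}, where this $L^1$ Hermite asymptotic (with the $O(d^{-2})$ relative error) is established by a careful Plancherel--Rotach analysis. Your proposal instead sketches a rederivation, and that sketch does not close: the leading-order Beta-integral computation produces the constant $2^{7/4}/\pi^{5/4}$, but the claimed $O(d^{-2})$ error is exactly the delicate point, and the parity argument you give does not yield it. The symmetry $h_d(-t)=(-1)^d h_d(t)$ makes $|h_d|e^{-t^2/2}$ even in $t$, but the Plancherel--Rotach correction terms are organized by powers of $1/d$, not by powers of $t$, so evenness in $t$ does not force the first $1/d$-order correction to vanish; one must actually track the phase and amplitude corrections (and the Airy/transition regions) to see the cancellation, which is the content of Larsson-Cohn's computation. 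Your proposed ``alternative route'' via \eqref{koenigschuetttomczak} also does not apply: that limit concerns $\boldsymbol{\lambda}(\ell_p^n)/\sqrt{n}$ in the dimension $n$, not an expansion in the degree $d$ of a Hermite $L^1$-norm, and it supplies no information about the $d^{-1/4}$ scale or its error term. So either cite Larsson-Cohn as the paper does, or be prepared to carry out the full second-order Plancherel--Rotach analysis.
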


Indeed, this follows from  Theorem~\ref{thm: limit bool homog}, equation \eqref{eqPdandHermite}
and  a result of Larsson-Cohn \cite[Remark 2.6 and 3.2]{larsson2002p}, which says that
\begin{equation*}
\frac{1}{\sqrt{2 \pi }} \int_{-\infty}^{\infty} |h_d(t)|e^{-\frac{t^2}{2}}\,dt
\,= \,
\frac{2^{7/4}}{\pi^{5/4}} \frac{\sqrt{d!}}{d^{1/4} }\,\bigg(1 + O\Big(\frac{1}{d^2}\Big)\bigg)\,.
\end{equation*}  
Now, if  $\mathcal{S}= \{S: \, \text{card}(S) = d\}$, then 
$\dim( \mathcal{B}_{\mathcal{S}}^N)= \sqrt{\binom{N}{d}}$, and if
$\mathcal{S}= \{S: \, \text{card}(S) \leq d\}$, then in the case $2d-1 < N$ (so in particular for large $N$), we have  (see again~\eqref{Desigforo})  
\begin{align*} 
{N \choose d} \leq \dim( \mathcal{B}_{\mathcal{S}}^N)
 = \sum_{k=0}^d \binom{N}{k} \le {N \choose d} \frac{N-(d-1)}{N-(2d-1)}\,.
\end{align*}
Since $\lim_{N\to \infty} \sqrt{{N \choose d}}{\big/}N^{d/2} = 1/\sqrt{d!}$\,, we in both considered cases have  
\[ 
\lim_{N\to \infty} \frac{\sqrt{\dim(\mathcal{B}_{\mathcal{S}}^N)}}{N^{d/2}} = \frac{1}{\sqrt{d!}}\,,
\]
which gives the required statement.

To show a few examples, with the use of a computational platform, we get the following limits:
\begin{align*}
    \dis\lim_{N \to \infty} \frac{\boldsymbol{\lambda}(\Bb_{=2}^N) }{\sqrt{\dim(\mathcal{B}_{=2}^N)}} & = \dis\lim_{N \to \infty} \frac{\boldsymbol{\lambda}(\Bb_{\le 2}^N) }{\sqrt{{N \choose 2}}} = \frac{1}{\sqrt{2 \pi}} \int_{-\infty}^{\infty} \left| \frac{t^2-1}{\sqrt{2}} \right| e^{-\frac{t^2}{2}} dt = {\frac{2^{7/4}}{\sqrt e \sqrt{2\pi}}}\approx \frac{0.814}{ 2^{1/4}} \\[1ex]
    \dis\lim_{N \to \infty} \frac{\boldsymbol{\lambda}(\Bb_{=3}^N) }{\sqrt{\dim(\mathcal{B}_{=3}^N)}} & = \dis\lim_{N \to \infty} \frac{\boldsymbol{\lambda}(\Bb_{\le 3}^N) }{{\sqrt{{N \choose 3}}}} = \frac{1}{\sqrt{2 \pi}} \int_{-\infty}^{\infty} \left| \frac{t^3-3t}{\sqrt 6} \right| e^{-\frac{t^2}{2}} dt =\frac{1}{3\sqrt{2 \pi} } \left(1 + \frac{4}{e^{3/2}}\right) \approx \frac{0.811}{3^{1/4}} \\[1ex]
        \dis\lim_{N \to \infty} \frac{\boldsymbol{\lambda}(\Bb_{=4}^N) }{\sqrt{\dim(\mathcal{B}_{=4}^N)}} & = \dis\lim_{N \to \infty} \frac{\boldsymbol{\lambda}(\Bb_{\le 4}^N) }{{\sqrt{{N \choose 4}}}} = \frac{1}{\sqrt{2 \pi}} \int_{-\infty}^{\infty} \left| \frac{t^4-6t^2+3}{\sqrt {24}} \right| e^{-\frac{t^2}{2}} dt \approx \frac{0.808}{4^{1/4}} \\[1ex]
        \dis\lim_{N \to \infty} \frac{\boldsymbol{\lambda}(\Bb_{=5}^N) }{\sqrt{\dim(\mathcal{B}_{=5}^N)}} & = \dis\lim_{N \to \infty} \frac{\boldsymbol{\lambda}(\Bb_{\le 5}^N) }{{\sqrt{{N \choose 5}}}} = \frac{1}{\sqrt{2 \pi}} \int_{-\infty}^{\infty} \left| \frac{t^5-10t^3+15t}{\sqrt {120}} \right| e^{-\frac{t^2}{2}} dt  \approx \frac{0.807}{5^{1/4}}   \\[1ex]
        \dis\lim_{N \to \infty} \frac{\boldsymbol{\lambda}(\Bb_{=6}^N) }{\sqrt{\dim(\mathcal{B}_{=6}^N)}} & = \dis\lim_{N \to \infty} \frac{\boldsymbol{\lambda}(\Bb_{\le 6}^N) }{{\sqrt{{N \choose 6}}}} = \frac{1}{\sqrt{2 \pi}} \int_{-\infty}^{\infty} \left| \frac{t^6-15t^4+45t^2-15}{\sqrt{ 720}} \right| e^{-\frac{t^2}{2}} dt \approx \frac{0.806}{6^{1/4}}. 
\end{align*}


\smallskip

 For the proof of Theorem~\ref{thm: limit bool homog} we  use a probabilistic point of view, treating  the coordinate functions $(\chi_{\{i\}})_{1 \le i \le N}$ on the Boolean cube  as independent Bernoulli random variables (taking the values $\pm 1$ with equal probability $1/2$);
  for the random variable $\chi_{\{i\}}$ we shortly write $x_i$.
    From this perspective, any Walsh function $\chi_S$  is itself a random variable, being a  product of coordinate functions. Consequently, any function on the Boolean cube may also be seen as a random variable.

 By Theorem~\ref{bool-int} the projection constant of  $\mathcal{B}_{=d}^N$ can be computed as the expectation
 \begin{equation*}
   \mathbb{E} \dis \bigg| \sum_{ |S| = d, S \subset [N]  } \chi_S \bigg|\,.
 \end{equation*}
 Moreover we know from the  central limit theorem that
 \begin{equation}\label{CLT}
   \frac{\sum_{i = 1}^N x_i}{\sqrt{N}} \,\,\longrightarrow \,\,Z
   \,,
 \end{equation}
  where $Z$ is a normal random variable with mean  $0$ and variance~$1$,
  and the convergence is in distribution.
      Based on this,
the main idea of our procedure is to rewrite  the random variable
$  \sum_{ |S| = d, S \subset [N]  } \chi_S $
in a suitable way into another
random variable involving $\sum_{i = 1}^N x_i/\sqrt{N} $, for which we manage to control its mean.

We use the notation $Y_n \overset{D}{\longrightarrow}Y$, whenever a sequence $(Y_n)$ converges in distribution to a random variable $Y$. Additionally to the notion of convergence in distribution, it will be necessary to consider convergence in probability.
We  write $Y_n \overset{P}{\longrightarrow}Y$  if the sequence $(Y_n)$ converges in probability to a~random variable $Y$.  Of course, convergence in probability  implies convergence in distribution  but, in general, the converse is not true. We recall that these two notions of convergence coincide, provided  the limit is a constant.

Moreover, we frequently  need a classical theorem of Slutsky. It states that, given two sequences  $(X_n)_{n }$ and $(Y_n)_{n}$ of random variables  such that $X_n \overset{D}{\longrightarrow} X$ and $Y_n \overset{P}{\longrightarrow} c$ (where $X$ is a random variable and $c \in \mathbb{R}$ a constant), then $X_n + Y_n \overset{D}{\longrightarrow} X + c$ and $X_n  Y_n \overset{D}{\longrightarrow} c X$.

Another result used at several places is  that convergence in distribution is inherited under continuous functions
in the sense that  $f(Y_n) \overset{D}{\longrightarrow} f(Y)$, whenever $Y_n \overset{D}{\longrightarrow} Y$ and $f: \mathbb{R} \to \mathbb{R}$ is continuous.

Finally, recall  that a sequence of random variables $(Y_n)_{n }$ is said to be uniformly integrable, whenever $$\lim_{a \to  \infty} \sup_{n \ge 1} \int_{|Y_n| \ge a } |Y_n| d P = 0\,.$$ A sufficient condition is that $\sup_{n} \mathbb{E}( |Y_n|^{1+\varepsilon}) \le C$ for some $\varepsilon, C>0$, since then
\begin{equation} \label{uni-int}
    \lim_{a \to  \infty} \sup_{n \ge 1} \int_{|Y_n| \ge a } |Y_n| d P \le \lim_{a \to  \infty} \frac{1}{a^\varepsilon} C\,.
\end{equation}
Uniform integrability is of particular importance to us  due to the fact that
 $Y$ is integrable and
\begin{align}\label{thm: unif int + conv in dist implies conv in mean}
\mathbb{E} (Y_n) \to \mathbb{E}(Y)\,,
\end{align}
provided $(Y_n)_{n}$ is a uniformly integrable and $Y_n \overset{D}{\longrightarrow} Y$ (see for example \cite[Theorem 3.5]{billingsley2013convergence}).

\subsection{The $2$-homogeneous case}
To keep our later  calculations
for the proof of
Theorem~\ref{thm: limit bool homog}
more transparent, we deal in detail first with the 2-homogeneous case.
\begin{theorem}\label{thm: cte proy boole 2 hom}
$$\dis\lim_{N \to \infty} \frac{\boldsymbol{\lambda}(\Bb_{=2}^N) }{N} = \sqrt{\frac{2}{\pi e}}$$
\end{theorem}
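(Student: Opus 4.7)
The plan is to combine Theorem~\ref{bool-int} with the central limit theorem applied to a suitable algebraic rewriting of $\sum_{|S|=2}\chi_S$.

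First, by Theorem~\ref{bool-int} and the identification of each $\chi_{\{i\}}$ with an independent Rademacher variable $x_i$,
\[
\boldsymbol{\lambda}(\mathcal{B}_{=2}^N) \,=\, \mathbb{E}\bigg|\sum_{1\le i<j\le N} x_i x_j\bigg|.
\]
The crucial identity here is the square expansion $\big(\sum_{i=1}^N x_i\big)^2 = \sum_i x_i^2 + 2\sum_{i<j}x_i x_j = N + 2\sum_{i<j}x_i x_j$, which yields
\[
\frac{1}{N}\,\boldsymbol{\lambda}(\mathcal{B}_{=2}^N)\,=\,\frac{1}{2}\,\mathbb{E}\bigg|\bigg(\frac{\sum_{i=1}^N x_i}{\sqrt{N}}\bigg)^{\!2}-1\bigg|.
\]
So the problem reduces to the limit of the right-hand side. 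By the central limit theorem $S_N/\sqrt{N}:=\sum_i x_i/\sqrt{N} \overset{D}{\longrightarrow} Z$ with $Z\sim \mathcal{N}(0,1)$, and the continuous mapping theorem gives $\big|(S_N/\sqrt N)^2-1\big|\overset{D}{\longrightarrow} |Z^2-1|$.

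To promote convergence in distribution to convergence in mean via~\eqref{thm: unif int + conv in dist implies conv in mean}, I would verify uniform integrability of $Y_N:=|(S_N/\sqrt N)^2-1|$ through the sufficient condition \eqref{uni-int}. A direct computation of $\mathbb{E}(S_N^4)$, using $\mathbb{E}(x_{i_1}x_{i_2}x_{i_3}x_{i_4})=0$ unless every index appears an even number of times, gives $\mathbb{E}(S_N^4)=3N^2-2N$; hence $\mathbb{E}(S_N^2/N)=1$ and $\mathbb{E}(Y_N^2)\le 2$ uniformly in $N$. This furnishes uniform integrability and therefore
\[
\lim_{N\to\infty}\frac{\boldsymbol{\lambda}(\mathcal{B}_{=2}^N)}{N}\,=\,\tfrac{1}{2}\,\mathbb{E}|Z^2-1|\,=\,\frac{1}{\sqrt{2\pi}}\int_{0}^{\infty}|t^2-1|\,e^{-t^2/2}\,dt.
\]

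Finally I would evaluate this Gaussian integral explicitly. Noticing that $\frac{d}{dt}(-t e^{-t^2/2})=(t^2-1)e^{-t^2/2}$, one splits $[0,\infty)$ at $t=1$ and obtains
\[
\int_{0}^{1}(1-t^2)e^{-t^2/2}\,dt\,=\,e^{-1/2},\qquad \int_{1}^{\infty}(t^2-1)e^{-t^2/2}\,dt\,=\,e^{-1/2},
\]
so the limit equals $\tfrac{2e^{-1/2}}{\sqrt{2\pi}}=\sqrt{2/(\pi e)}$, which is the claimed value. The main (minor) obstacle is the uniform integrability step; everything else is algebraic manipulation, CLT, and an elementary antiderivative. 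This skeleton is exactly the one that should generalise to arbitrary degree $d$ in Theorem~\ref{thm: limit bool homog} once $\sum_{|S|=d}\chi_S$ is rewritten as a polynomial in $S_N/\sqrt{N}$ whose limiting law is a Hermite polynomial of $Z$.
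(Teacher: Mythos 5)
Your proposal is correct and follows essentially the same path as the paper's proof: expand $\big(\sum_i x_i\big)^2$ to write $\sum_{i<j}x_ix_j=\tfrac12\big[(\sum_i x_i)^2-N\big]$, apply the CLT and continuous mapping to get distributional convergence, verify uniform integrability via a second-moment bound, and evaluate the Gaussian integral. The only cosmetic difference is in the uniform integrability step: the paper obtains $\mathbb{E}\big|\tfrac1N\sum_{i<j}x_ix_j\big|^2=\binom{N}{2}/N^2\le 1$ directly from orthogonality of the Walsh functions, while you compute $\mathbb{E}(S_N^4)=3N^2-2N$ by pairing indices; both are elementary routes to the same bound, and your integral evaluation matches.
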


\begin{proof}
In a first step we expand the square of the Boolean function $x \mapsto \sum_{i=1}^N x_i$ and rearrange the terms using $x_i^2 = 1$, to get
\[
\dis\sum_{1 \le i < j \le N}  x_i x_j = \dis\sum_{1 \le i < j \le N}  x_{\{i,j\}} = \frac{1}{2} \left[\bigg( \sum_{i=1}^N x_i \bigg)^2 - N \right]\,.
\]
By the central limit theorem the sequence of random variables $(Z_N)$ given by
$$
Z_N := \frac{1}{\sqrt{N}}\dis\sum_{i =1}^N x_i $$
converges in distribution to a normal random variable $Z$ with mean $0$ and variance $1$. Since the function $f(x) = \frac{|x^2-1|}{2}$ is continuous, we have
\[
\frac{1}{N} \bigg|
\sum_{1 \le i < j \le N}  x_i x_j \bigg| = \frac{|Z_N^2 -1 |}{2} \overset{D}{\longrightarrow} \frac{|Z^2 -1 |}{2}.
\]
Now note that by the orthogonality of the Fourier--Walsh basis get
\[
\mathbb{E} \bigg[    \bigg| \frac{1}{N} \sum_{1 \le i < j \le N}  x_i x_j  \bigg|^2 \bigg] = \frac{\binom{N}{2}}{N^2} \le 1\,,
\]
and hence  the uniform integrability of the random variable sequence $\bigg(\bigg|\sum_{1 \le i < j \le N}  x_i x_j \bigg| \bigg)_{N \ge 1}$ (see
the remark done in \eqref{uni-int}).
Then, thanks to Theorem~\ref{bool-int} and to what we
explained in \eqref{thm: unif int + conv in dist implies conv in mean}, we see that
\[
\lim_{N \to \infty} \frac{\boldsymbol{\lambda}(\Bb_{=2}^N) }{N} = \lim_{N \to \infty} \frac{1}{N} \; \mathbb{E} \bigg( \bigg| \sum_{1 \le i < j \le N}  x_i x_j \bigg| \bigg) = \lim_{N \to \infty} \mathbb E \bigg( \frac{|Z_N^2 -1 |}{2} \bigg) =  \mathbb E \bigg( \frac{|Z^2 -1 |}{2}\bigg)\,.
\]
Computing the latter integral, we finally arrive at
\[
\lim_{N \to \infty} \frac{\boldsymbol{\lambda}(\Bb_{=2}^N) }{N} = \mathbb E \bigg( \frac{|Z^2 -1 |}{2} \bigg) = \frac{1}{\sqrt{2 \pi}} \int_{-\infty}^{\infty} \frac{|t^2-1|}{2}e^{\frac{-t^2}{2}} dt = \sqrt{\frac{2}{\pi e}}\,. \qedhere
\]
\end{proof}

The general case of arbitrary  degrees $d \in \NN$ is  technically more involved.
In the previous proof for $d=2$, the key step is to rewrite $$\frac{1}{N} \dis\sum_{1 \le i < j \le N}  x_i x_j $$
in terms of a  polynomial in one variable.

For  arbitrary $d$ (as in Theorem~\ref{thm: limit bool homog}), we require an adequate decomposition of the random variable
\begin{equation}\label{AvariableA}
  Y_N(x) = \frac{1}{N^{d/2}}\dis  \sum_{ |S| = d } x^S = \frac{1}{N^{d/2}}\dis  \sum_{ \alpha \in \Lambda_T(d,N)} x^\alpha\,.
\end{equation}

In order to derive the expectation of these kernels, we offer   two proofs with independently interesting features. Both
approaches have two steps (see also the proof of Theorem~\ref{thm: cte proy boole 2 hom}): in a  first step  the kernels
$Y_N$ are reformulated  in such a way that in a second step  the central limit theorem may be  applied properly. In both
approaches the  second steps  are  basically identical, whereas the arguments for the first ones are substantially different.
The first approach (see Section~\ref{firstapr1}) doesn't need any knowledge on Hermite polynomials. It is  mainly based on
a~natural decomposition of multi indices $\alpha$ into their 'even part' $\alpha_E$ and their 'tetrahedral part' $\alpha_T$.
At the very end we  arrive at the limit formula from \eqref{limform} discovering  'posthum' that it may be written in terms
of Hermite polynomials. Knowing this fact, one in a second approach may use a (somewhat classical) formula of Beckner from
\cite{beckner1975inequalities}  to reach the same goal. Since we use this formula  as  a sort of black box, this second approach here appears to be
shorter.

\subsection{First approach - decomposing indices} \label{firstapr1}

For any index $\alpha~\in~\Lambda(d,N)$ there are a unique integer $0 \leq k \leq d/2$ and a~unique decomposition
\[
\alpha = \alpha_T+ \alpha_E
\]
of $\alpha$ into the sum of a~tetrahedral index $\alpha_T \in \Lambda_T(d-2k,N)$ and an even index $\alpha_E \in \Lambda_E(2k,N)$
(so all coordinates of $\alpha_E$ are even and $|\alpha_E|=2k$).
This in particular implies that
\[
\text{
$x^\alpha = x^{\alpha_T} \cdot \underbrace{x^{\alpha_E}}_{=1} = x^{\alpha_T} $ \quad
for every \quad  $x \in\{-1, +1\}^N$.}
\]
The way to find such a decomposition of a  given index $\alpha \in \Lambda(d,N)$ is rather simple: Given $1 \leq j \leq N$,
the $j$-th coordinate of the tetrahedral part $\alpha_T \in \Lambda(d,N)$ is equal to 1, whenever $\alpha_j$ is odd, and 0 else.
The even part is defined as $\alpha_E:= \alpha - \alpha_T \in \Lambda(d,N)$. Defining
\[
k := \frac{|\alpha_E|}{2}\,,
\]
we indeed see that $\alpha_T \in \Lambda_T(d-2k,N)$ and $\alpha_E \in \Lambda_E(2k,N)$.
Moreover, since  all coordinates of $\alpha_E$ are even, there exists a unique $\beta \in \Lambda(d,N)$ such that $\alpha_E = 2 \beta$,
that is, there is a canonical way to identify $\Lambda_E(2k,N)$ and $\Lambda(k,N)$.

All together, this leads to the following identification of index sets:
\begin{equation} \label{trick}
\Lambda(d,N)	\leftrightsquigarrow \bigcup_{k=0}^{\lfloor d/2 \rfloor} \Lambda_T(d-2k,N) \times \Lambda_E(2k,N) \leftrightsquigarrow \bigcup_{k=0}^{\lfloor d/2 \rfloor} \Lambda_T(d-2k,N) \times \Lambda(k,N)\,,
\end{equation}
where the second identification comes from the fact that there is a canonical correspondence between  $\Lambda_E(2k,N)$ and $\Lambda(k,N)$.

We say that two indices $\alpha \in \Lambda(d_1,N)$ and $\beta \in \Lambda(d_2,N)$ do not share variables whenever  they have disjoint support.

\begin{lemma}\label{rem: separate variables}
Let  $\alpha \in \Lambda(d,N) $ and  $k \leq d/2$. Assume that the tetrahedral part $\alpha_T~\in~\Lambda_T(d-2k,N)$ and the  even part $\alpha_E \in \Lambda(2k,N)$ of $\alpha$ do not share variables, and that $ \alpha_E = 2 \beta $ with $\beta \in \Lambda_T(k,N)$. Then
\[
|[\alpha]| =  \frac{d!}{2^k}\,.
\]
\end{lemma}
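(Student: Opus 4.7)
The plan is to reduce the claim to the identity $|[\alpha]| = d!/\alpha!$ already recorded in \eqref{scholz}, and then to compute $\alpha!$ coordinate by coordinate using the disjoint-support hypothesis.

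First I would observe that, since $\alpha_T$ and $\alpha_E$ do not share variables, for each coordinate $1 \leq j \leq N$ exactly one of three mutually exclusive situations occurs: either $(\alpha_T)_j = 1$ and $(\alpha_E)_j = 0$, in which case $\alpha_j = 1$ and $\alpha_j! = 1$; or $(\alpha_T)_j = 0$ and $(\alpha_E)_j = 0$, in which case $\alpha_j = 0$ and $\alpha_j! = 1$; or $(\alpha_T)_j = 0$ and $(\alpha_E)_j = 2\beta_j$ with $\beta_j = 1$ (since $\beta$ is tetrahedral), in which case $\alpha_j = 2$ and $\alpha_j! = 2$. The hypothesis $\beta \in \Lambda_T(k,N)$ tells us that the last situation occurs for exactly $|\beta| = k$ coordinates.

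Multiplying these contributions over $j = 1, \ldots, N$ yields
\[
\alpha! = \prod_{j=1}^{N} \alpha_j! = 2^{k}\,,
\]
and then \eqref{scholz} gives
\[
|[\alpha]| = \frac{d!}{\alpha!} = \frac{d!}{2^{k}}\,,
\]
as required.

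There is no real obstacle here: once one unpacks the definitions of $\alpha_T$, $\alpha_E$, and $\beta$ and uses the disjoint-support assumption, the factor $2^{k}$ simply reads off the $k$ doubled coordinates contributed by $\alpha_E = 2\beta$. The role of the hypothesis is essential, however: without the disjointness of the supports one could have $\alpha_j = 1 + 2 = 3$ at some positions, producing a factor of $6$ instead of $2$, and the clean formula $\alpha! = 2^k$ would fail.
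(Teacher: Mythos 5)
Your proof is correct and takes essentially the same approach as the paper: both reduce to the formula $|[\alpha]| = d!/\alpha!$ from \eqref{scholz} and then use the disjoint-support hypothesis to compute $\alpha! = \alpha_T!\,\alpha_E! = (2\beta)! = 2^k$; you simply spell out the coordinate-by-coordinate case analysis that the paper states in one line.
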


\begin{proof}
  We deduce  from~\eqref{scholz} that
  \begin{equation*}
    |[\alpha]| = \frac{d!}{\alpha!} = \frac{d!}{(\alpha_T+ \alpha_E)!}=\frac{d!}{\alpha_T! \alpha_E!} =\frac{d!}{(2\beta)!} = \frac{d!}{2^k}. \qedhere
  \end{equation*}
\end{proof}

Let us begin analyzing the idea to rewrite the random variable from~\eqref{AvariableA} for arbitrary  degrees $d \ge 2$.
Taking $ \sum_{i = 1}^N x_i $ for $x \in\{-1, +1\}^N$ to the power $d$ and writing
$ x_\bi = x_{i_1}  \ldots x_{i_1} $ for  $\bi \in \mathcal{M}(d,N)$, we get
\[
\left( \sum_{i = 1}^N x_i \right)^d =\sum_{\bi \in \mathcal{M}(d,N)} x_{\bi}
=
\sum_{\bj \in \mathcal{J}(d,N)} \sum_{\bi \in [\bj]} x_\bi
=
\sum_{\bj \in \mathcal{J}(d,N)} |[\bj]|x_{\bj}
= \sum_{\alpha \in \Lambda(d,N)} |[\alpha]| x^{\alpha}\,.
\]
Decomposing each $\alpha \in \Lambda(d,N)$,
according to (the first identification in) \eqref{trick}, and  using the fact that $x_i^2 = 1$,
we have
\begin{equation*}
\left( \sum_{i = 1}^N x_i \right)^d = \sum_{k = 0}^{\floor{d/2}} \left(  \sum_{ \alpha_E \in \Lambda_E(2k,N)} \; \; \sum_{ \alpha_T \in \Lambda_T(d - 2k,N)} |[\alpha_T + \alpha_E]| x^{\alpha_T} \right)\,.
\end{equation*}
Consequently, by \eqref{scholz}
\begin{align*}
  \left( \sum_{i = 1}^N x_i \right)^d
  =
   \sum_{ \alpha_T \in \Lambda_T(d,N)} d! x^{\alpha_T}
  +
  \sum_{k = 1}^{\floor{d/2}} \left(  \sum_{ \alpha_E \in \Lambda_E(2k,N)} \; \; \sum_{ \alpha_T \in \Lambda_T(d - 2k,N)} |[\alpha_T + \alpha_E]| x^{\alpha_T} \right)\,,
\end{align*}
so that rearranging terms
leads to
\begin{equation}\label{eq: sum power d (2)}
\sum_{ \alpha_T \in \Lambda_T(d ,N)} x^{\alpha_T} = \frac{1}{d!} \left[ \left( \sum_{i = 1}^N x_i \right)^d - \sum_{k = 1}^{\floor{d/2}} \left(  \sum_{ \alpha_T \in \Lambda_T(d - 2k,N)}  \; \;
\sum_{ \alpha_E \in \Lambda_E(2k,N)} \; \;  |[\alpha_T + \alpha_E]|x^{\alpha_T} \right) \right].
\end{equation}

To illustrate this, note that for $d \in \{2,3\}$  we get
\begin{itemize}
\item $
\dis\sum_{1 \le i < j \le N}  x_i x_j = \frac{1}{2} \left[ \left( \sum_{i=1}^N x_i \right)^2 - N \right]$\,,
\item $
\dis\sum_{1 \le i < j < k \le N}  x_i x_j x_k = \frac{1}{6} \left[ \left( \sum_{i=1}^N x_i \right)^3 - (3N-2)\left( \sum_{i=1}^N x_i \right) \right]$\,.
\end{itemize}

The following two technical lemma analyze \eqref{eq: sum power d (2)} in more detail.

\begin{lemma}\label{lem: lim coef homog bool}
Given $d,N \in \NN$, we for each $x \in\{-1, +1\}^N$ have
\[
\sum_{ \alpha \in \Lambda_T(d ,N)} x^{\alpha} = \frac{1}{d!} \left[ \left( \sum_{i = 1}^N x_i \right)^d - \sum_{k = 1}^{\floor{d/2}} C_{d,k,N} \sum_{ \alpha_T \in \Lambda_T(d - 2k,N)} x^{\alpha_T}  \right]\,,
\]
where for $1 \le k \le \floor{d/2}$
\[
\text{$C_{d,k,N} = \binom{N-d+2k}{k} \frac{d!}{2^k} + D_{d,k,N} $ \,\,\,\,and\,\,\,\, $0 \leq D_{d,k,N} \le N^{k-1} 2 d d!$}\,.
\]
 In particular,
\begin{equation}\label{formu}
  \lim_{N \to \infty} \frac{C_{d,k,N}}{N^k} = \frac{d!}{k! \;2^k}.
\end{equation}
\end{lemma}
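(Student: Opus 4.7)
The plan is to start from equation~\eqref{eq: sum power d (2)} and simplify, for each fixed $k$ with $1 \leq k \leq \lfloor d/2 \rfloor$, the inner double sum $\sum_{\alpha_T \in \Lambda_T(d-2k,N)} \sum_{\alpha_E \in \Lambda_E(2k,N)} |[\alpha_T + \alpha_E]|\, x^{\alpha_T}$. The crucial preliminary observation is that for any fixed $\alpha_T \in \Lambda_T(d-2k, N)$, the inner sum $\sum_{\alpha_E \in \Lambda_E(2k, N)} |[\alpha_T + \alpha_E]|$ is invariant under permutations of $[N]$, hence depends only on $d, k, N$ and not on the specific choice of $\alpha_T$. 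This common value is precisely $C_{d,k,N}$, and pulling it out of the outer sum yields the claimed factorization.

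The next step is to compute $C_{d,k,N}$ by splitting the index set $\{\alpha_E \in \Lambda_E(2k, N)\}$, equivalently (via $\alpha_E = 2\beta$) the set $\{\beta \in \Lambda(k, N)\}$, into two disjoint pieces. In the \emph{main case} I take the $\beta$ that are tetrahedral and whose support is disjoint from $\text{supp}(\alpha_T)$; for each such $\beta$ Lemma~\ref{rem: separate variables} yields $|[\alpha_T + \alpha_E]| = d!/2^k$, while the number of such $\beta$ is $\binom{N - (d-2k)}{k} = \binom{N-d+2k}{k}$, namely the number of $k$-element subsets of $[N]\setminus \text{supp}(\alpha_T)$. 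This contributes exactly $\binom{N-d+2k}{k}\, d!/2^k$ to $C_{d,k,N}$, the advertised main term. The \emph{exceptional case} collects all remaining $\beta$, i.e.\ those that are non-tetrahedral or whose support meets $\text{supp}(\alpha_T)$, and contributes the quantity $D_{d,k,N}$, which is manifestly nonnegative.

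To bound $D_{d,k,N}$, I would use the trivial estimate $|[\alpha_T + \alpha_E]| = d!/\alpha! \leq d!$, so $D_{d,k,N}$ is at most $d!$ times the number of exceptional $\beta$. That count equals $|\Lambda(k,N)| - \binom{N-d+2k}{k} = \binom{N+k-1}{k} - \binom{N-d+2k}{k}$. Both summands are polynomials in $N$ of degree $k$ with identical leading coefficient $1/k!$, so the difference is a polynomial in $N$ of degree at most $k-1$; a direct expansion of the two products shows that the coefficient of $N^{k-1}$ in the difference equals $k(d-1-k)/k!$, which for $1 \leq k \leq d/2$ is nonnegative and bounded above by $d$. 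Absorbing the lower-order corrections by a safety factor of two yields $D_{d,k,N} \leq 2 d \cdot d! \cdot N^{k-1}$. The asymptotic identity~\eqref{formu} is an immediate consequence: dividing $C_{d,k,N}$ by $N^k$, the dominant contribution $\binom{N-d+2k}{k}/N^k$ tends to $1/k!$ while $D_{d,k,N}/N^k \to 0$, giving $\lim_N C_{d,k,N}/N^k = d!/(k! \cdot 2^k)$.

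The main obstacle I expect is the polynomial bookkeeping for the binomial difference: since the main term and the total count share the leading monomial $N^k/k!$, one must track the second-highest coefficient precisely in order to control $D_{d,k,N}$ by an expression of the clean form $2d\cdot d!\cdot N^{k-1}$. Once this elementary but delicate estimate is in place, the rest of the argument is a straightforward combination of the symmetry observation, Lemma~\ref{rem: separate variables}, and the trivial bound $|[\alpha]| \leq d!$.
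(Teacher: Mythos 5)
Your overall strategy matches the paper's: fix $\alpha_T$, observe by symmetry that the inner sum over $\alpha_E$ is independent of the choice of $\alpha_T$, extract the main term $\binom{N-d+2k}{k}\,d!/2^k$ from the ``good'' $\alpha_E$ (tetrahedral $\beta$ with support disjoint from $\operatorname{supp}\alpha_T$) via Lemma~\ref{rem: separate variables}, and bound the remainder $D_{d,k,N}$ by $d!$ times a count of exceptional indices. Where you diverge is in how the count is estimated. The paper splits the exceptional indices into two pieces --- those $\alpha_E$ sharing a variable with $\alpha_T$ (the set $\Lambda_E(\alpha_T)^c$) and those with disjoint support but non-tetrahedral $\beta$ (the set $\Lambda_E(2k,N-d+2k)^{<k}$) --- and bounds each cardinality by a simple argument (pinning one forced variable, then counting freely), giving $(d-2k)N^{k-1}$ and $N^{k-1}$ respectively, hence $D_{d,k,N}\le(d-2k+1)d!\,N^{k-1}\le 2d\,d!\,N^{k-1}$ immediately. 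You instead lump all exceptional $\beta$ together and compute the total count as $\binom{N+k-1}{k}-\binom{N-d+2k}{k}$, then try to bound this binomial difference as a polynomial in $N$.

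That last step is where a genuine gap appears. Your leading-coefficient computation is correct (the $N^{k-1}$ coefficient of the difference is $k(d-1-k)/k!=(d-1-k)/(k-1)!\le d$), but the claim that ``absorbing the lower-order corrections by a safety factor of two'' yields $D_{d,k,N}\le 2d\,d!\,N^{k-1}$ is asserted, not proved: a degree-$(k-1)$ polynomial with leading coefficient at most $d$ need not be bounded by $2dN^{k-1}$ for all relevant $N$ without controlling the lower-order coefficients. The estimate is in fact true and the gap is fillable, for instance by telescoping with Pascal's rule:
\[
\binom{N+k-1}{k}-\binom{N-d+2k}{k}=\sum_{j=N-d+2k}^{N+k-2}\binom{j}{k-1}\le(d-k-1)\binom{N+k-2}{k-1}\le(d-k-1)\frac{(N+k-2)^{k-1}}{(k-1)!}\,,
\]
and for $N\ge d\ge 2k$ one has $N+k-2<2N$, so this is at most $(d-k-1)\frac{2^{k-1}}{(k-1)!}N^{k-1}\le 2dN^{k-1}$ using $2^{k-1}/(k-1)!\le 2$. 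Without some such argument, however, the ``factor-of-two'' step does not stand on its own; the paper's route of bounding the two exceptional sub-cases by direct cardinality estimates sidesteps the polynomial bookkeeping entirely and is cleaner at this point.
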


\begin{proof}
We fix some  $1 \le k \le \floor{d/2}$, and note that (in view of equation \eqref{eq: sum power d (2)}) we need to study
\[
\sum_{ \alpha_T \in \Lambda_T(d - 2k,N)}
 \; \;
\sum_{ \alpha_E \in \Lambda_E(2k,N)}|[\alpha_T + \alpha_E]| x^{\alpha_T}\,,
\]
in order to be able to first define and second control the factor $C_{d,k,N}$.
We fix some $\alpha_T \in \Lambda_T(d-2k,N)$, and start to decompose
\[
\sum_{ \alpha_E \in \Lambda_E(2k,N)}|[\alpha_T + \alpha_E]| x^{\alpha_T}\,.
\]
Let us denote the set of even indices which do not share variables with $\alpha_T$, by $\Lambda_E(\alpha_T) \subset \Lambda_E(2k,N)$,
and use $\Lambda_E(\alpha_T)^c \subset \Lambda_E(2k,N)$ for its complement in $\Lambda_E(2k,N)$. Then
\[
\sum_{ \alpha_E \in \Lambda_E(2k,N)}|[\alpha_T + \alpha_E]| x^{\alpha_T}
=
\bigg[
\underbrace{\sum_{ \alpha_E \in \Lambda_E(\alpha_T)^c}|[\alpha_T + \alpha_E]|}_{= :A} + \underbrace{\sum_{ \alpha_E \in \Lambda_E(\alpha_T)}|[\alpha_T + \alpha_E]| }_{= :B}\bigg]  x^{\alpha_T}\,,
\]
and we handle both sums separately.

Before we start to  estimate, note that $A + B$ does not depend on $\alpha_T$, that is, for each $\alpha_T$ the sum
\[
\sum_{ \alpha_E \in \Lambda_E(2k,N)}|[\alpha_T + \alpha_E]|
\]
is the same.
Indeed, given two different tetrahedral multi indices  $\alpha_T, \alpha_T' \in \Lambda_T(d-2k,N)$, the natural bijection between index sets that maps $\alpha_T$ to $\alpha_T'$, given by a suitable permutation of coordinates, also lets the sums invariant. This allows us to define $$C_{d,k,N} := A + B\,.
$$
\noindent Estimating $A:$ In order to estimate the cardinality of $\Lambda_E(\alpha_T)^c$,
observe  that any multi index in $\Lambda_E(\alpha_T)^c$ needs to share at least one of the $d-2k$ possible variables of $\alpha_T$, and therefore
\begin{equation} \label{bound-a}
  |\Lambda_E(\alpha_T)^c| \le (d-2k) |\Lambda_E(2k-2,N)| = (d-2k) |\Lambda(k-1,N)| \le (d-2k) N^{k-1}\,.
\end{equation}
Clearly, $\alpha_T + \alpha_E \in \Lambda(d,N)$ for any $\alpha_E \in \Lambda_E(\alpha_T)^c$, and hence
by~\eqref{scholz}
\[
 A= \sum_{ \alpha_E \in \Lambda_E(\alpha_T)^c}  |[\alpha_T + \alpha_E]|
= \sum_{ \alpha_E \in \Lambda_E(\alpha_T)^c} \frac{d!}{(\alpha_T + \alpha_E)!} \leq (d-2k) N^{k-1} d!\,.
\]

\noindent  Estimating $B:$
We have
\begin{align*}
B=\sum_{ \alpha_E \in \Lambda_E(\alpha_T)}  |[\alpha_T +\alpha_E]|
 =  \sum_{ \alpha_E \in \Lambda_E(2k,N-d+2k)}  |[\alpha_T +\alpha_E]|.
\end{align*}
We then may  decompose the index set $\Lambda_E(2k,N-d+2k)$ into the set of those indices which use $k$ variables, denoted by $\Lambda_E(2k,N-d+2k)^k$, and the set that contains all even indices with less than $k$ variables, denoted by $\Lambda_E(2k,N-d+2k)^{<k}$,
so
\begin{align*}
B
 = \underbrace{\sum_{ \alpha_E \in \Lambda_E(2k,N-d+2k)^{< k}}  |[\alpha_T +\alpha_E]|}_{= :B^{< k}} \,+\, \underbrace{\sum_{ \alpha_E \in \Lambda_E(2k,N-d+2k)^{k}}  |[\alpha_T +\alpha_E]|}_{= :B^{=k}}\, 
 .
\end{align*}
 Observe that given a multi index in $\Lambda_E(2k,N-d+2k)^{<k}$, it is mandatory that some variable appears to at least the $4$th power (since all the indices in the set $\Lambda_E(2k,N-d+2k)^{<k}$ are even). Going through all the possible $N-d +2k$ variables, we get the bound
\begin{align*}
|\Lambda_E(2k,N-d+2k)^{<k}| & \le (N-d+2k) |\Lambda_E(2k-4,N-d+2k)| \\
& = (N-d+2k) |\Lambda(k-2,N-d+2k)| \le (N-d+2k)^{k-1}\,,
\end{align*}
and  then (as above with \eqref{scholz})
\[
B^{< k} = \sum_{ \alpha_E \in \Lambda_E(2k,N-d+2k)^{<k}} |[\alpha_T + \alpha_E]| \le \left| \Lambda_E(2k,N-d+2k)^{<k} \right| \; d! \le N^{k-1} d!\,.
\]
On the other hand, note that for each $ \alpha \in \Lambda_E(2k,N-d+2k)^{k}$  there is $\beta \in \Lambda_T(k,N-d+2k)$ such that $\alpha = 2 \beta$. Thus, this defines a~one to one mapping between $\Lambda_E(2k,N-d+2k)^{k}$ and $\Lambda_T(k,N-d+2k)$, so we get
$$|\Lambda_E(2k,N-d+2k)^{k}| = |\Lambda_T(k,N-d+2k)| = \binom{N-d+2k}{k}.$$
 Then by   Lemma~\ref{rem: separate variables}
\begin{align*}
B^{= k} = \sum_{ \alpha_E \in \Lambda_E(2k,N-d+2k)^{k}}  |[\alpha_T+\alpha_E] |
=  \binom{N-d+2k}{k} \frac{d!}{2^k}\,.
\end{align*}

\noindent  Combining step:
We define
$D := A + B^{< k} $
(note that $D = D_{d,k,N}$ in fact depends on $d,k$ and $N$).
Then $ D\leq  N^{k-1}  2d d!$, and  all in all we obtain

\begin{align*}
 C_{d,k,N} = \sum_{ \alpha_E \in \Lambda_E(2k,N)}   |[\alpha_T + \alpha_E]|
    =
    A + B
       =
    B^{= k} + D_{d,k,N}\leq   \binom{N-d+2k}{k}\frac{d!}{2^k}+ N^{k-1}  2d d!\,.
\end{align*}
Finally, for a fixed $k$, a standard calculation gives~\eqref{formu}.
\end{proof}

The following lemma goes one  step further -   namely, rewriting the  random variable from~\eqref{AvariableA}
in a~way that later allows us to calculate the limit of its mean.
Notice that for $d = 0$ this random variable equals the constant function of value $1$.

\begin{lemma}\label{lem: boolean polynomial rewriting}
Given $d \in \NN_0$ and $N \in \NN$, there is  $\varphi_{d,N} \in C(\RR)$ such that for all
$x \in\{-1, +1\}^N$
\begin{equation}\label{equ}
  \frac{1}{N^{d/2}}\dis  \sum_{ \alpha \in \Lambda_T(d,N)} x^\alpha = P_d \left( \frac{\sum_{i=1}^N x_i}{\sqrt{N}} \right) + \varphi_{d,N} \left(\frac{\sum_{i=1}^N x_i}{\sqrt{N}} \right)\,,
\end{equation}
where  $P_d$ is as in Theorem \ref{thm: limit bool homog}
and
$\varphi_{0,N} = \varphi_{1,N} =0$.
Moreover, we have that
\begin{equation} \label{limitA}
    \varphi_{d,N} \left(\frac{\sum_{i=1}^N x_i}{\sqrt{N}} \right)
    {\overset{P}{\longrightarrow}}  0 \,,  \quad\, \text{as} \,\, N\to \infty\,,
\end{equation}
and
\begin{equation} \label{limit2}
    \frac{1}{N^{d/2}}\dis  \sum_{ \alpha \in \Lambda_T(d,N)} x^\alpha  
    {\overset{D}{\longrightarrow}} P_d(Z) \,,  \quad\, \text{as} \,\, N\to \infty\,,
\end{equation}
where $Z$ is a normal distribution with mean $0$ and variance $1$.
\end{lemma}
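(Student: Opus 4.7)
I would argue by induction on $d$. The base cases $d = 0$ and $d = 1$ are immediate since the left-hand side of \eqref{equ} equals $1 = P_0(Z_N)$ respectively $Z_N = P_1(Z_N)$, so that $\varphi_{0,N} = \varphi_{1,N} = 0$ works. For the inductive step, I would divide the identity of Lemma~\ref{lem: lim coef homog bool} by $N^{d/2}$ and obtain
\begin{equation*}
\frac{1}{N^{d/2}} \sum_{\alpha \in \Lambda_T(d,N)} x^\alpha
= \frac{Z_N^d}{d!} - \sum_{k=1}^{\lfloor d/2 \rfloor} \frac{C_{d,k,N}}{N^k \, d!} \cdot \frac{1}{N^{(d-2k)/2}} \sum_{\alpha_T \in \Lambda_T(d-2k,N)} x^{\alpha_T},
\end{equation*}
where $Z_N := \tfrac{1}{\sqrt{N}} \sum_{i=1}^N x_i$, and then substitute the inductive hypothesis into each inner sum.

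The key algebraic point is the comparison between $C_{d,k,N}/(N^k d!)$ and the coefficient $1/(k!\, 2^k)$ appearing in the recursion for $P_d$. By Lemma~\ref{lem: lim coef homog bool} together with the bound $0\le D_{d,k,N} \le N^{k-1} 2 d d!$, I may write
\begin{equation*}
\frac{C_{d,k,N}}{N^k \, d!} = \frac{1}{k!\,2^k} + \varepsilon_{d,k,N}, \qquad \varepsilon_{d,k,N} \xrightarrow[N\to\infty]{} 0\,,
\end{equation*}
since $\binom{N-d+2k}{k}/N^k \to 1/k!$ and $D_{d,k,N}/(N^k d!) = O(1/N)$. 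Substituting and matching the resulting expression against the defining relation $P_d(t) = t^d/d! - \sum_{k=1}^{\lfloor d/2 \rfloor} P_{d-2k}(t)/(k!\,2^k)$ forces the definition
\begin{equation*}
\varphi_{d,N}(t) := -\sum_{k=1}^{\lfloor d/2 \rfloor} \varepsilon_{d,k,N}\, P_{d-2k}(t) \,-\, \sum_{k=1}^{\lfloor d/2 \rfloor} \frac{C_{d,k,N}}{N^k\, d!}\, \varphi_{d-2k,N}(t)\,,
\end{equation*}
which is a polynomial (in particular, continuous) in $t$ and by construction satisfies \eqref{equ}.

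It then remains to verify the two limits. For \eqref{limitA} I would invoke Slutsky's theorem: by the CLT and the continuous mapping theorem one has $P_{d-2k}(Z_N) \overset{D}{\to} P_{d-2k}(Z)$, so multiplication by the deterministic scalar $\varepsilon_{d,k,N} \to 0$ pushes the first sum to $0$ in distribution, hence in probability since the limit is constant; the second sum is handled by the inductive hypothesis $\varphi_{d-2k,N}(Z_N) \overset{P}{\to} 0$ multiplied by the bounded factor $C_{d,k,N}/(N^k d!)$. Summing these contributions yields $\varphi_{d,N}(Z_N) \overset{P}{\to} 0$, and then \eqref{limit2} follows immediately from the CLT, continuous mapping applied to $P_d$, and a final application of Slutsky to absorb $\varphi_{d,N}(Z_N)$. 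I expect the only mildly delicate point to be bookkeeping — keeping track of the different modes of convergence and arranging the algebraic recursion to match the one defining $P_d$; the inductive machinery itself carries no surprises.
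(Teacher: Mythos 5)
Your proof is correct and follows essentially the same path as the paper: induction on $d$, dividing the identity of Lemma~\ref{lem: lim coef homog bool} by $N^{d/2}$, inserting the inductive hypothesis, matching coefficients against the recursion defining $P_d$, and then closing with the CLT, continuous mapping, and Slutsky. One small remark worth noting: your explicit recursion
\[
\varphi_{d,N}(t)=-\sum_{k=1}^{\lfloor d/2\rfloor}\Big(\tfrac{C_{d,k,N}}{d!\,N^k}-\tfrac{1}{k!\,2^k}\Big)P_{d-2k}(t)\;-\;\sum_{k=1}^{\lfloor d/2\rfloor}\tfrac{C_{d,k,N}}{d!\,N^k}\,\varphi_{d-2k,N}(t)
\]
is the one actually forced by \eqref{equ}; the paper's printed definition
$\varphi_{d,N}(t)=\sum_k\big(\tfrac{C_{d,k,N}}{d!N^k}-\tfrac{1}{k!2^k}\big)\big(P_{d-2k}+\varphi_{d-2k,N}\big)$
does not quite make \eqref{equ} an identity (it carries the coefficient $\tfrac{C_{d,k,N}}{d!N^k}-\tfrac{1}{k!2^k}$ rather than $-\tfrac{C_{d,k,N}}{d!N^k}$ in front of $\varphi_{d-2k,N}$, and the opposite sign in front of $P_{d-2k}$). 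This is a harmless slip in the paper — the coefficients involved all tend to $0$ or stay bounded, so the convergence argument goes through unchanged — but your version is the algebraically exact one.
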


\begin{proof}

The proof will be by induction on $d$. Recall from Theorem~\ref{thm: limit bool homog} that $P_0 =1$ and $P_1=t$.
Then for  $d=0$ there is nothing to prove, and for $d=1$ the proof by~\eqref{CLT} is obvious as well.

Let us fix some $d \ge 2$, and  assume that the result is true for all degrees $\leq d-1$.
The aim is to prove the result for  $d$. Dividing the equality from  Lemma~\ref{lem: lim coef homog bool}
by $N^{d/2} = N^{k}N^{(d-2k)/2}$ and using the  inductive hypothesis, we have
\begin{align*}
\frac{1}{N^{d/2}} \sum_{ \alpha \in \Lambda_T(d ,N)} x^{\alpha} & = \frac{1}{d!} \left( \frac{\sum_{i = 1}^N x_i}{\sqrt{N}} \right)^d - \sum_{k = 1}^{\floor{d/2}}  \frac{C_{d,k,N} }{d! N^k} \left(\frac{1}{N^{(d-2k)/2}} \sum_{ \alpha_T \in \Lambda_T(d - 2k,N)} x^{\alpha_T} \right)\\[1ex]&
=  \frac{1}{d!} \left( \frac{\sum_{i = 1}^N x_i}{\sqrt{N}} \right)^d  - \sum_{k = 1}^{\floor{d/2}}  \frac{C_{d,k,N} }{d! N^k} \left(P_{d-2k}\left( \frac{\sum_{i = 1}^N x_i}{\sqrt{N}} \right) + \varphi_{d-2k,N}\left( \frac{\sum_{i = 1}^N x_i}{\sqrt{N}} \right) \right).
\end{align*}
Defining for $t \in \mathbb{R}$
\[
\varphi_{d,N}(t) := \sum_{k = 1}^{\floor{d/2}}  \left(\frac{C_{d,k,n} }{d! N^k} - \frac{1}{k! 2^k} \right)\left(P_{d-2k}(t) + \varphi_{d-2k,N}\left( t \right) \right)
\]
and recalling the definition of $P_d$,
we see  that \eqref{equ}  holds.
It remains to show the two limit
formulas from~\eqref{limitA} and~\eqref{limit2}.
By the inductive hypothesis for each $1 \leq  k \leq d/2$
\begin{equation*}
 P_{d-2k}\left(\frac{\sum_{i=1}^N x_i}{\sqrt{N}} \right)
{\overset{D}{\longrightarrow}} P_{d-2k}(Z)
\quad
\text{and}
\quad
    \varphi_{d-2k,N} \left(\frac{\sum_{i=1}^N x_i}{\sqrt{N}} \right)
        {\overset{P}{\longrightarrow}}  0 \,,  \quad\, \text{as \,\, $N\to \infty$\,,}
\end{equation*}
Moreover, by
 Lemma \ref{lem: lim coef homog bool}, \eqref{formu} we have that
  $
 \dis\lim_{N \to \infty} \frac{C_{d,k,n} }{d! N^k} - \frac{1}{k! 2^k} = 0\,,
 $
so that \eqref{limitA} follows by Slutsky's theorem.
 Since convergence in distribution
 is preserved under continuous
 functions,
we conclude that
\[ P_{d}\left(\frac{\sum_{i=1}^N x_i}{\sqrt{N}} \right)
{\overset{D}{\longrightarrow}} P_{d}(Z)\,, \quad\, \text{as \,\, $N\to \infty$\,,}
\]
and then we obtain~\eqref{limit2} from another application of Slutsky's theorem, using~\eqref{equ} and~\eqref{limitA}.
\end{proof}

Finally, we come to the proof of the main contribution of this section, Theorem~\ref{thm: limit bool homog},
which extends  Theorem~\ref{thm: cte proy boole 2 hom} to  all possible degrees.

\begin{proof}[Proof of  Theorem~\ref{thm: limit bool homog}]

We for $N \in \mathbb{N}$ define the random variable
$ Y_N(x)$ as in~\eqref{AvariableA}\,.
Applying Lemma~\ref{lem: boolean polynomial rewriting}, the central limit theorem as in~\eqref{CLT}, the fact that convergence in distribution is preserved under  continuous functions, and Slutsky's theorem, we get
\[
Y_N(x)  = P_d \left( \frac{\sum_{i=1}^N x_i}{\sqrt{N}} \right) + \varphi_{d,N} \left(\frac{\sum_{i=1}^N x_i}{\sqrt{N}} \right)
{\overset{D}{\longrightarrow}} P_d(Z) \,, \quad\, \text{as \,\, $N\to \infty$}\,.
\]

Now orthogonality of the Fourier--Walsh basis assures that for all $N$
\[
\mathbb{E}|Y_N|^2 = \frac{|\Lambda_T(d,N)|}{N^d} \le 1\,,
\]
which gives the uniform integrability of all $ Y_N $ (see the remark from
\eqref{uni-int}). Using  \cite[Theorem 3.5]{billingsley2013convergence}, this implies
\[
\lim_{N \to \infty}   \mathbb E |Y_N| = \mathbb E [|P_d(Z)|] = \frac{1}{\sqrt{2 \pi}} \int_{-\infty}^{\infty} |P_d(t)|e^{-\frac{t^2}{2}} dt\,,
\]
which by  Theorem~\ref{bool-int}  finishes the proof of \eqref{limform}.
The second claim follows with similar arguments. Observe first that by another application of Theorem~\ref{bool-int} we have
\[
\boldsymbol{\lambda}(\mathcal{B}_{\le d}^N)  = \mathbb E \left[ \left| \dis\sum_{  \alpha \in \Lambda_{T}(\le d,N) } x^\alpha \right| \right].
\]
Also by Lemma \ref{lem: boolean polynomial rewriting},
\eqref{limit2}, for every $0 \le k \le d$
\[
\frac{1}{N^{k/2}} \dis\sum_{\alpha \in \Lambda_{T}(k,N) } x^\alpha
{\overset{D}{\longrightarrow}} P_k(Z) \,, \quad\, \text{as \,\, $N\to \infty$\,,}
\]
where $Z$ is as above. Hence we as before  use  Slutsky's theorem and the fact that
 a sequence of random variables converges  in probability
whenever it converges in distribution to a constant, to see that for every $0 \le k < d $
\[
\frac{1}{N^{d/2}} \dis\sum_{  \alpha \in \Lambda_{T}(k,N) } x^\alpha
{\overset{P}{\longrightarrow}} 0 \,, \quad\, \text{as \,\, $N\to \infty$\,.}
\]
Now one more  application of  Slutsky's theorem shows that
\[
\lim_{N \to \infty} \frac{1}{N^{d/2}} \dis\sum_{ \alpha \in \Lambda_{T}(\le d,N) } x^\alpha
=
\lim_{N \to \infty} \frac{1}{N^{d/2}} \dis\sum_{k=1 }^d\dis\sum_{  \alpha \in \Lambda_{T}(k,N) } x^\alpha
=\lim_{N \to \infty} \frac{1}{N^{d/2}} \dis\sum_{  \alpha \in \Lambda_{T}( d,N) } x^\alpha
= P_d(Z),
\]
where the limit is taken in the sense of  distribution. Again by orthogonality
\[
 \mathbb{E} \left[    \left| \frac{1}{N^{d/2}} \sum_{ \alpha \in \Lambda_T(\leq d,N)} x^\alpha \right|^2 \right] = \frac{|\Lambda_T(\leq d,N)|}{N^d} < \infty \,,
 \]
implying that the random variables  $\frac{1}{N^{d/2}} \dis\sum_{  \alpha \in \Lambda_{T}(\leq d,N) } x^\alpha,\,\, N \in \mathbb{N}$
are uniformly integrable, and this is enough to conclude  that
\[
\dis\lim_{N \to \infty} \frac{\boldsymbol{\lambda}(\mathcal{B}_{\le d}^N)}{N^{d/2}} = \EE \left(|P_d(Z)| \right)
\]
(see again \eqref{thm: unif int + conv in dist implies conv in mean}). Together with the first claim this finishes the proof.
\end{proof}

\subsection{Second approach - Beckner's formula}  \label{firstapr2}
The following identity of  Beckner from \cite[Equation (5)]{beckner1975inequalities}
 re\-phrases $\sum_{ |S| = d} x_S$
directly in terms of Hermite polynomials, and may hence serve as a substitute of
Lemma~\ref{lem: lim coef homog bool}.

\begin{lemma} \label{thm: Beckner}
For each $d,N \in \NN$ and $x \in  \{-1,+1\}^N$
\[
\frac{1}{N^\frac{d}{2}}\sum_{ |S| = d} x_S = \frac{1}{d!} \left( h_d\left( \frac{x_1 + \ldots + x_N}{\sqrt{N}} \right) + \frac{1}{N} \sum_{k = 1}^{\floor{d/2}} a_{d,k,N} h_{d-2k}\left(\frac{x_1 + \ldots + x_N}{\sqrt{N}}\right) \right),
\]
where the coefficients $a_{d,k,N}$ are bounded in  $N$ and $h_\ell$ is the $\ell$-th Hermite polynomial.
\end{lemma}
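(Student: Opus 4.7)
The plan is to derive this identity as a direct consequence of the authors' own Lemma~\ref{lem: boolean polynomial rewriting} combined with the identification $P_d = h_d/d!$ from equation~\eqref{eqPdandHermite}, bypassing any need to invoke Beckner's paper as a black box. Writing $S_N := x_1 + \cdots + x_N$, Lemma~\ref{lem: boolean polynomial rewriting} gives
\[
\frac{1}{N^{d/2}}\sum_{\alpha \in \Lambda_T(d,N)} x^\alpha = P_d\!\left(\frac{S_N}{\sqrt{N}}\right) + \varphi_{d,N}\!\left(\frac{S_N}{\sqrt{N}}\right),
\]
and since $P_d = h_d/d!$, the first term already matches the leading $\frac{1}{d!}h_d(S_N/\sqrt{N})$ of the claimed identity. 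So the task reduces to showing that the error polynomial $\varphi_{d,N}$ can be rewritten in the form
\[
\varphi_{d,N}(t) = \frac{1}{d!\,N}\sum_{k=1}^{\lfloor d/2\rfloor} a_{d,k,N}\, h_{d-2k}(t),
\]
with coefficients $a_{d,k,N}$ uniformly bounded in $N$.

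The first step is a quantitative sharpening of the convergence in equation~\eqref{formu}. From the decomposition $C_{d,k,N} = \binom{N-d+2k}{k}\frac{d!}{2^k} + D_{d,k,N}$ with $0 \le D_{d,k,N} \le N^{k-1}\,2d\,d!$ supplied by Lemma~\ref{lem: lim coef homog bool}, together with the elementary expansion $\binom{N-d+2k}{k}/N^k = 1/k! + O(1/N)$, I would show that
\[
b_{d,k,N} \;:=\; \frac{C_{d,k,N}}{d!\,N^k} - \frac{1}{k!\,2^k} \;=\; \frac{c_{d,k,N}}{N}
\]
for some sequence $c_{d,k,N}$ bounded in $N$. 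The second step is to unfold the recursion
\[
\varphi_{d,N}(t) = \sum_{k=1}^{\lfloor d/2\rfloor} b_{d,k,N}\bigl(P_{d-2k}(t) + \varphi_{d-2k,N}(t)\bigr)
\]
from the proof of Lemma~\ref{lem: boolean polynomial rewriting} by induction on $d$: assuming inductively that each $\varphi_{d-2k,N}$ already equals $\frac{1}{N}$ times a linear combination of lower Hermite polynomials with coefficients bounded in $N$, substituting this in and using $P_{d-2k} = h_{d-2k}/(d-2k)!$ expresses $\varphi_{d,N}$ as the sum of two pieces: a $\frac{1}{N}$-piece (coming from $b_{d,k,N}\cdot P_{d-2k}$) and a $\frac{1}{N^2}$-piece (from $b_{d,k,N}\cdot \varphi_{d-2k,N}$), both of which can be regrouped as $\frac{1}{N}$ times a Hermite expansion with coefficients bounded uniformly in $N$.

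The main obstacle is bookkeeping the recursive coefficient structure: the nested recursion for $\varphi_{d,N}$ produces, at each level of unfolding, mixed products of $b_{d,k,N}$ and $b_{d-2k,j,N}$, and one must check that despite these interactions the final coefficients in front of each fixed $h_{d-2m}(t)$ remain bounded as $N\to\infty$. This is handled cleanly by strong induction on $d$, where the inductive hypothesis is formulated uniformly in $N$ rather than just pointwise. Once this bounded-coefficient structure of $\varphi_{d,N}$ is established, combining it with the main term $P_d = h_d/d!$ yields the desired identity with the explicit factor $\frac{1}{d!}$ in front of $h_d$ and the $\frac{1}{N}$ pulled out, as asserted.
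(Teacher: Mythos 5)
The paper does not actually prove this lemma; it cites Beckner's identity from \cite{beckner1975inequalities} and explicitly treats it as a ``hidden toolkit'' that makes the second approach shorter than the first. Your derivation therefore takes a genuinely different route: you obtain the Beckner identity as a corollary of the paper's own combinatorial machinery, namely Lemma~\ref{lem: lim coef homog bool}, the recursion defining $\varphi_{d,N}$ inside the proof of Lemma~\ref{lem: boolean polynomial rewriting}, and the identity $P_d = h_d/d!$ from~\eqref{eqPdandHermite}. Both of your steps are sound. For the quantitative sharpening of~\eqref{formu}, Lemma~\ref{lem: lim coef homog bool} gives $0 \le D_{d,k,N}/(d!\,N^k) \le 2d/N$, and since $\binom{N-d+2k}{k}$ is a product of $k$ integers each lying between $N-d+1$ and $N$, one has
$\tfrac{1}{k!}\bigl(1-\tfrac{d-1}{N}\bigr)^k \le \binom{N-d+2k}{k}/N^k \le \tfrac{1}{k!}$,
so $b_{d,k,N} := C_{d,k,N}/(d!\,N^k) - 1/(k!\,2^k)$ satisfies $|b_{d,k,N}| \le c_{d,k}/N$ with $c_{d,k}$ independent of $N$. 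In the induction, since $\varphi_{d-2k,N}$ has degree at most $d-2k-2$ of the same parity as $d$, the term $b_{d,k,N}\,\varphi_{d-2k,N}$ contributes at order $1/N^2$ with $N$-bounded Hermite coefficients, so after extracting one factor of $1/N$ and regrouping by degree the coefficients in front of each $h_{d-2\ell}$ remain bounded in $N$; multiplying through by $d!$ gives the stated $a_{d,\ell,N}$. What your route buys is a self-contained proof from the paper's own lemmas, with transparent control on the coefficients; what it gives up is the logical independence of the two approaches, since the ``second approach'' now becomes a reformulation of the first rather than a shortcut around it --- which is precisely the trade-off the authors allude to when they call Beckner's formula a hidden toolkit.
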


With this the proof of  Theorem~\ref{thm: limit bool homog} follows
similarly  as above replacing the polynomials $P_d$ by $h_d/d!$.
Indeed, applying Lemma~\ref{thm: Beckner}, the fact that convergence in distribution is preserved under  continuous functions
together with~\eqref{CLT} gives that for all $\ell \in \mathbb{N}_0$ we 
again  get
\[
h_{\ell} \left( \frac{\sum_{i=1}^N x_i}{\sqrt{N}} \right) \overset{D}{\longrightarrow} h_{\ell}(Z) \,, \quad\, \text{as \,\, $N\to \infty$}\,.
\]
Also, as the constants $a_{d,k,N}$ are bounded in $N$, we have $\lim_{ N \to \infty} \frac{a_{d,k,N}}{N} = 0$ for every $1 \le k \le \floor{d/2} $, so that by Slutky's theorem ($Y_N$ again as in~\eqref{AvariableA})
\[
Y_N(x)  =  \frac{1}{d!} \left( h_d\left( \frac{x_1 + \ldots + x_N}{\sqrt{N}} \right) + \frac{1}{N} \sum_{k = 1}^{\floor{d/2}} a_{d,k} h_{d-2k}\left(\frac{x_1 + \ldots + x_N}{\sqrt{N}}\right) \right)
\,\,{\overset{D}{\longrightarrow}} \,\,\frac{1}{d!}h_d(Z) \,, \quad\, \text{as \,\, $N\to \infty$.}
\]
 The rest of the proof
proceeds in a~similar way as before.

While it may seem that this approach is more concise, it actually conceals the use of a deep specific result as if it were a 'hidden toolkit'. Furthermore, we would like to underscore that the first approach, founded on the contemporary combinatorial technique known as 'monomial decomposition' (see e.g., 
\cite{galicer2021monomial,mansilla2019thesis}), reinforces this fresh perspective. We firmly believe that this approach holds substantial potential for wider applications.

\section{Sidon  vs projection constant}
\label{Unconditional basis vs projection constant}

Given a set  $\mathcal{S} \subset \big\{ S \subset [N]\big\}$, we now  establish an intimate link of the Sidon constant of $\mathcal{B}^N_{\mathcal{S}}$ with its projection constant. To do so, we define
\[
\mathcal{S}^\flat = \big\{S \setminus \{i\} \colon S \in \mathcal{S}, \, i \in S  \big\}\,.
\]
Moreover, the  following constant
\begin{equation}\label{kappa}
    \kappa:=\bigg(\prod_{k=1}^\infty\
\sinc{\frac{\pi}{\mathfrak{p}_k}}\bigg)^{-1}=2.209\ldots\,,
\end{equation}
is going to appear, where  $(\mathfrak{p}_k)_{k\geq 1}$ is
the  sequence of prime numbers and $\sinc x:=(\sin x)/x$.

The main result is as follows.

\begin{theorem} \label{BGL3}
 Let $2 \leq d \leq N$ . Then
  \begin{equation}\label{est-one}
    \boldsymbol{\sid}\big(\mathcal{B}^N_{=d}\big) \,\,\leq\,\,
     C(d)  \,
          \boldsymbol{\lambda}\big(\mathcal{B}^N_{=d-1}\big)\,,
  \end{equation}
    where $C(d)\leq e^{d} (2d) \kappa^d 2^{d-1}$. Additionally,
    \begin{equation}\label{est-two}
 \boldsymbol{\sid}\big(\mathcal{B}^N_{\leq d}\big) \,\,\leq\,\,
    \widetilde{C}(d) \, \max_{1 \leq  k \leq d}  \boldsymbol{\lambda}\big(\mathcal{B}^N_{= k-1}\big)\,,
      \end{equation}
  where $\widetilde{C}(d)\leq  (2.69076)^{2d} (d+1) (1+\sqrt{2})^d (2d) \kappa^d 2^{d}$.
  \end{theorem}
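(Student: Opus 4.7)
The plan is to chain the two reductions announced in the introduction: Theorem~\ref{BGL1} to pass from the Sidon constant to the Gordon--Lewis constant of the ``one level shaved'' space (supported on $\mathcal{S}^\flat$), and then Theorem~\ref{gl_versus_proj} to pass from that Gordon--Lewis constant back to the corresponding projection constant. The reduced family $\mathcal{S}^\flat$ appears naturally because, in order to turn a $d$-homogeneous tetrahedral polynomial into a vector-valued object whose unconditionality can be controlled through Gordon--Lewis, one treats it as (a symmetrization of) a $(d-1)$-linear form in $d$ tetrahedral variables; precisely this derivative-type reduction sends $\mathcal{B}^N_{=d}$ to $\mathcal{B}^N_{=d-1}$.

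For the homogeneous estimate \eqref{est-one}, I would first apply Theorem~\ref{BGL1} to obtain an inequality of the form $\boldsymbol{\sid}\bigl(\mathcal{B}^N_{=d}\bigr) \le A(d)\,\boldsymbol{g\!l}\bigl(\mathcal{B}^N_{=d-1}\bigr)$, where $A(d)$ collects three sources of loss: (i) the polarization/symmetrization step, responsible for the factor $\kappa^d\, 2^{d-1}$, with $\kappa$ as in \eqref{kappa}; (ii) the homogeneous hypercontractivity inequality \eqref{homogeneousweissler} used to exchange the $L_\infty$ and $L_2$ norms on $\{-1,+1\}^N$, which contributes an overall factor of order $e^d$ when applied at both ends of the argument; and (iii) a combinatorial factor of order $d$ coming from summing over the $d$ ``slots'' of a $d$-subset when one passes to $\mathcal{S}^\flat$ via the ideal property \eqref{niceprop} of the Gordon--Lewis constant together with the identification \eqref{identities}. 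Next, Theorem~\ref{gl_versus_proj} converts the Gordon--Lewis constant of $\mathcal{B}^N_{=d-1}$ into its projection constant, so that in total $\boldsymbol{\sid}\bigl(\mathcal{B}^N_{=d}\bigr) \le C(d)\, \boldsymbol{\lambda}\bigl(\mathcal{B}^N_{=d-1}\bigr)$ with $C(d) \le e^d\,(2d)\,\kappa^d\,2^{d-1}$.

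For \eqref{est-two} I would reduce the non-homogeneous case to the homogeneous one via the Klimek inequality \eqref{klimek}. Given $f \in \mathcal{B}^N_{\le d}$, write $f=\sum_{k=0}^d f_k$ with $f_k \in \mathcal{B}^N_{=k}$, so that $\sum_{S}|\widehat f(S)| = \sum_{k=0}^d\sum_{|S|=k}|\widehat{f_k}(S)|$. Apply \eqref{est-one} to each $f_k$ to bound $\sum_{|S|=k}|\widehat{f_k}(S)| \le C(k)\,\boldsymbol{\lambda}\bigl(\mathcal{B}^N_{=k-1}\bigr)\|f_k\|_\infty$, and then control $\|f_k\|_\infty$ by $(1+\sqrt 2)^d\|f\|_\infty$. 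Summing the $d+1$ pieces produces the factors $(d+1)\,(1+\sqrt 2)^d$. The two appearances of the constant $2.69076$ reflect the use of \eqref{weissler} at two places (in the Sidon--to--Gordon--Lewis step and in the Gordon--Lewis--to--projection step for non-homogeneous $f$), where the sharper \eqref{homogeneousweissler} is no longer available and must be replaced by the Eskenazis--Ivanisvili bound.

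The main obstacle will be the constant bookkeeping inside Theorem~\ref{BGL1}: tracking the polarization of tetrahedral polynomials via the identification \eqref{identities} between functions on $\{-1,+1\}^N$ and tetrahedral polynomials on $\ell_\infty^N$, and then effecting the degree reduction $\mathcal{S}\rightsquigarrow\mathcal{S}^\flat$ through the ideal property \eqref{niceprop} of $\boldsymbol{g\!l}$. Once Theorem~\ref{BGL1} and Theorem~\ref{gl_versus_proj} are available as black boxes, the final assembly is just the multiplication of the explicit constants $e^d$, $2d$, $\kappa^d$ and $2^{d-1}$ in the homogeneous case, and the additional $(2.69076)^{2d}(d+1)(1+\sqrt 2)^d\, 2$ factors in the non-homogeneous case.
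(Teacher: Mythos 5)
Your high-level plan — chain Theorem~\ref{BGL1} with Theorem~\ref{gl_versus_proj} — is exactly the paper's route (the paper states that Theorem~\ref{BGL2} follows from combining those two results, and Theorem~\ref{BGL3} is the special case $\mathcal{S} = \{|S|=d\}$, resp.\ $\{|S|\leq d\}$). However, you have the roles of the two theorems reversed. Theorem~\ref{BGL1} compares $\boldsymbol{\sid}(\mathcal{B}^N_{\mathcal{S}})$ with $\boldsymbol{g\!l}(\mathcal{B}^N_{\mathcal{S}})$ for the \emph{same} support set $\mathcal{S}$, contributing only the hypercontractive factor $e^d$ (via Lemma~\ref{abssum} and Lemma~\ref{tool2}); no degree is shaved there. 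The shaving $\mathcal{S}\rightsquigarrow\mathcal{S}^\flat$, along with the polarization factor $2d$ (the bound $\|U_d\|\leq 2d$) and the coefficient-annihilation factor $\kappa^d 2^{d-1}$ (Lemma~\ref{OrOuSe}), all come from Theorem~\ref{gl_versus_proj}. If you applied the theorems as you describe — BGL1 already producing $\boldsymbol{g\!l}(\mathcal{B}^N_{=d-1})$ and then gl\_versus\_proj passing from $\boldsymbol{g\!l}(\mathcal{B}^N_{=d-1})$ to $\boldsymbol{\lambda}(\mathcal{B}^N_{=d-1})$ with no further loss — neither step matches what is actually proved, and a literal application of gl\_versus\_proj to $\mathcal{B}^N_{=d-1}$ would shave a second level down to $\mathcal{B}^N_{=d-2}$. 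The net constant and degree happen to come out right because you still chain both theorems once, but the intermediate bookkeeping is incorrect.

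For \eqref{est-two}, your description is internally inconsistent. You propose reducing to the homogeneous case by writing $f = \sum_k f_k$, applying Klimek's bound $\|f_k\|_\infty \leq (1+\sqrt 2)^d\|f\|_\infty$, and then invoking the homogeneous estimate \eqref{est-one} on each $f_k$. That is a perfectly legitimate alternative route (it is not the paper's — the paper applies the non-homogeneous versions of Theorems~\ref{BGL1} and \ref{gl_versus_proj} directly, which internally use Klimek and the diagram \eqref{greatpic}), and modulo the trivial terms $k\in\{0,1\}$ it would give $\widetilde{C}(d) \lesssim (d+1)(1+\sqrt2)^d\, e^d\,(2d)\,\kappa^d\,2^{d-1}$. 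But then the factor $(2.69076)^{2d}$ would not appear at all: that factor comes from replacing \eqref{homogeneousweissler} by the non-homogeneous \eqref{weissler} inside BGL1, which your Klimek reduction specifically avoids. You cannot simultaneously reduce to homogeneous pieces (getting $e^d$) and invoke \eqref{weissler} twice (getting $(2.69076)^{2d}$); pick one. In fact, the termwise-Klimek approach you sketch gives a slightly sharper $\widetilde C(d)$ than the one stated in Theorem~\ref{BGL3}, which is fine since the statement only claims an upper bound — but the justification should be made consistent before the constants are written down.
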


Note that the estimates of $\boldsymbol{\sid}\big(\mathcal{B}^N_{= d}\big)$ and $\boldsymbol{\sid}\big(\mathcal{B}^N_{\leq d}\big)$ are anyway trivial if $0\leq d \leq 1$, so there is no need to compare them with projection constants.
In fact, the previous the statement is an obvious consequence of the following more general result.

\begin{theorem} \label{BGL2}
Let $2 \leq d \leq N$ and $\mathcal{S} \subset \big\{ S \subset [N] \colon |S| = d \big\}$. Then
\[
\boldsymbol{\sid}\big(\mathcal{B}^N_{\mathcal{S}}\big) \,\,\leq\,\,
C(d) \,\,\|\mathbf{Q}: \mathcal{B}^N_{=d} \to \mathcal{B}^N_{\mathcal{S}}\|
\,\, \boldsymbol{\lambda}\big(\mathcal{B}^N_{\mathcal{S}^\flat}\big)\,,
\]
where  the constant  $C(d)$ is   as in~\eqref{est-one} and $\mathbf{Q}$ is the projection annihilating Fourier coefficients with indices $S$ not in~$\mathcal{S}$.

More generally, if $\mathcal{S} \subset \big\{ S \subset [N] \colon |S| \leq d \big\}$, then
\[
\boldsymbol{\sid}\big(\mathcal{B}^N_{\mathcal{S}}\big) \,\,\leq\,\,
\widetilde{C}(d) \,\,\max_{1 \leq  k \leq d}\|\mathbf{Q}_k: \mathcal{B}^N_{=k} \to \mathcal{B}^N_{\mathcal{S}_{=k}}\|
\max_{1 \leq  k \leq d}  \boldsymbol{\lambda}(\mathcal{B}^N_{(\mathcal{S}_{=k})^\flat})\,,
\]
where $\mathcal{S}_{=k}=\{S \in \mathcal{S} :\, |S|=k \}$, the constant  $\widetilde{C}(d)$ is as in~\eqref{est-two} and $\mathbf{Q}_k$ is the projection annihilating Fourier coefficients with indices $S$ not in~$\mathcal{S}_{=k}$.
\end{theorem}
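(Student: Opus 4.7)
The plan is to prove the bound via a two-step chain: first bound the Sidon constant of $\mathcal{B}^N_\mathcal{S}$ by its Gordon--Lewis constant up to combinatorial and hypercontractive factors, and then bound the Gordon--Lewis constant by the projection constant of the reduced space $\mathcal{B}^N_{\mathcal{S}^\flat}$. Since $\boldsymbol{\sid}(\mathcal{B}^N_\mathcal{S}) = \boldsymbol{\chi}((\chi_S)_{S\in\mathcal{S}})$ by the identification recalled in Section~2, the first step amounts to reversing the classical Gordon--Lewis inequality~\eqref{gl-inequality} on this particular space. I would effect this reversal via a Rudin-type sign averaging, combined with a Boolean/Steinhaus desymmetrization transfer between Walsh functions and tetrahedral polynomials on $\mathbb{T}^N$ (producing the factor $\kappa^d$ of~\eqref{kappa}) together with the homogeneous hypercontractive estimate~\eqref{homogeneousweissler} (producing the factor $e^d$); the combinatorial factor $2^{d-1}$ then accounts for polarization costs incurred in this transfer.

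For Step~2 in the homogeneous case $\mathcal{S}\subset \{S\colon |S|=d\}$, I would construct an explicit factorization of $\id_{\mathcal{B}^N_\mathcal{S}}$ through $\mathcal{B}^N_{\mathcal{S}^\flat}$ using the tetrahedral Euler-type identity $d\,x^S = \sum_{i\in S} x_i\cdot x^{S\setminus\{i\}}$. Averaging over the $d$ choices of $i\in S$ and composing with $\mathbf{Q}$ produces operators
\[
v\colon \mathcal{B}^N_\mathcal{S}\to \mathcal{B}^N_{\mathcal{S}^\flat}, \qquad u\colon \mathcal{B}^N_{\mathcal{S}^\flat}\to \mathcal{B}^N_\mathcal{S},
\]
realizing $\id_{\mathcal{B}^N_\mathcal{S}} = u\circ v$ with $\|u\|\,\|v\|\leq (2d)\,\|\mathbf{Q}\|$; the ideal property~\eqref{niceprop} combined with the general bound $\boldsymbol{g\!l}(X)\leq \boldsymbol{\lambda}(X)$ (which follows from $\gamma_1\leq \gamma_\infty$ in finite dimensions together with~\eqref{gammainfty}) then yields
\[
\boldsymbol{g\!l}(\mathcal{B}^N_\mathcal{S}) \;\leq\; (2d)\,\|\mathbf{Q}\|\,\boldsymbol{\lambda}(\mathcal{B}^N_{\mathcal{S}^\flat})\,.
\]
Chaining Step~1 and Step~2 yields the first claim with the advertised $C(d)$. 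For the non-homogeneous version, I would decompose any $f\in \mathcal{B}^N_\mathcal{S}$ into its homogeneous parts $f=\sum_{k=0}^d f_k$, apply Klimek's estimate~\eqref{klimek} to obtain $\|f_k\|_\infty\leq (1+\sqrt{2})^d\|f\|_\infty$, invoke the homogeneous statement for each space $\mathcal{B}^N_{\mathcal{S}_{=k}}$ separately, and sum the resulting Sidon bounds over $k$. The $d+1$ summation terms, together with a separate application of the non-homogeneous hypercontractive estimate~\eqref{weissler} needed when Step~1 is carried out uniformly across degrees, account for the remaining factors $(d+1)$, $(1+\sqrt 2)^d$ and $(2.69076)^{2d}$ appearing in $\widetilde C(d)$.

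The hard part is Step~1: in general $\boldsymbol{g\!l}$ does not control any unconditional basis constant, so the reverse Gordon--Lewis estimate for the Walsh system is the substantive Banach-space input. Obtaining a bound of order $e^d\kappa^d 2^{d-1}$ there requires a careful coordination of Rudin-type averaging with both the Steinhaus--Boolean transfer and the hypercontractivity of low-degree Boolean functions. The factorization in Step~2 and the Klimek decomposition in the non-homogeneous case are, by comparison, relatively mechanical once the first step is available.
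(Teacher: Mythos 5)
Your two-step skeleton (Sidon $\le$ Gordon--Lewis, then Gordon--Lewis $\le$ projection constant of the reduced index set) matches the paper, but both steps are carried out incorrectly, and Step~2 as proposed cannot be salvaged.

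The fatal flaw is the factorization in Step~2. You claim to factor $\id_{\mathcal{B}^N_{\mathcal{S}}}=u\circ v$ through the space $\mathcal{B}^N_{\mathcal{S}^\flat}$ itself. Such a factorization requires $v$ to be injective, hence $\dim \mathcal{B}^N_{\mathcal{S}^\flat} = |\mathcal{S}^\flat| \ge |\mathcal{S}| = \dim \mathcal{B}^N_{\mathcal{S}}$, and this fails in the principal examples: for $\mathcal{S}=\{S\subset[N]:|S|=d\}$ with $d<N/2$ one has $|\mathcal{S}^\flat|=\binom{N}{d-1}<\binom{N}{d}=|\mathcal{S}|$. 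The Euler identity $d\,x^S=\sum_{i\in S}x_i\,x^{S\setminus\{i\}}$ is indeed the engine behind the argument, but the term $x^{S\setminus\{i\}}$ depends on $i$, so it naturally hands you, not a single element of $\mathcal{B}^N_{\mathcal{S}^\flat}$, but a \emph{family} indexed by $i\in[N]$, i.e.\ an operator from $\ell_\infty^N$ into $\mathcal{B}^N_{\mathcal{S}^\flat}$. The paper (Theorem~\ref{gl_versus_proj}) factors $\id_{\mathcal{B}^N_{\mathcal{S}}}$ through the much larger space $\mathcal{L}\big(\ell^N_\infty,\mathcal{B}^N_{\mathcal{S}^\flat}\big)$ via the symmetric multilinear form $\check P_f$ (Lemma~\ref{sym}, $\|U_d\|\le 2d$), and then invokes the substantive Lemma~\ref{supo} that $\boldsymbol{g\!l}\big(\mathcal{L}(X,Y)\big)\le\boldsymbol{\lambda}(Y)$ whenever $X$ is a finite-dimensional Banach lattice. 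This is a genuinely different and stronger input than the elementary observation $\boldsymbol{g\!l}(Y)\le\boldsymbol{\lambda}(Y)$ that you appeal to; the latter by itself cannot produce a bound on $\boldsymbol{g\!l}\big(\mathcal{B}^N_{\mathcal{S}}\big)$ in terms of $\boldsymbol{\lambda}\big(\mathcal{B}^N_{\mathcal{S}^\flat}\big)$ for the dimension reason just given.

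A secondary issue is the misallocation of the constants. In the paper, Step~1 (Theorem~\ref{BGL1}) costs only $e^d$ in the homogeneous case (respectively $(2.69076)^{2d}$), and is proved via the $\pi_1$-estimate for diagonal operators (Lemma~\ref{tool2}) combined with the hypercontractivity bound (Lemma~\ref{abssum}); no Rudin averaging or Steinhaus transfer is used there. The factor $\kappa^d 2^{d-1}$ enters only in Step~2 as the norm of the projection $\mathbf R\colon\mathcal{P}_d(\ell_\infty^N)\to\mathcal{B}^N_{\mathcal{S}}$ (Lemma~\ref{OrOuSe}), needed because the factorization passes through spaces of full (non-tetrahedral) polynomials, and the factor $2d$ is the polarization cost $\|U_d\|\le 2d$. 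Your non-homogeneous sketch via Klimek's estimate is roughly in the right spirit but would again have to inherit the corrected Step~2; the paper handles this with the larger direct-sum diagram~\eqref{greatpic}, also routed through $\mathcal{L}$-spaces.
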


The proof of Theorem~\ref{BGL2} is given in Section~\ref{Gordon--Lewis vs projection constant}. In the coming two sections
we prepare it collecting a few  independently interesting facts.

\subsection{Annihilating coefficients}
\label{Annihilating coefficients}

 In what follows we need a lemma, which is a discrete variant of a~result
 for polynomials on the $N$-dimensional torus due  to  Ortega-Cerd\`a, Ouna\"{\i}es and Seip in \cite{ortega2009sidon}. We include a proof for the sake of completeness.

\begin{lemma} \label{OrOuSe}
Let $2 \leq d \leq N$ .
Then
\[
\big\|\mathbf{Q}\colon \mathcal{P}_d (\ell^N_\infty) \to \mathcal{B}^N_{=d}\big\|
\leq \kappa^d 2^{d-1}  \,,
\]
where $\mathbf{Q}$ is the projection annihilating coefficients with non-tetrahedral indices. In particular,
\[
\boldsymbol{\lambda}\big(\mathcal{B}^N_{=d}\big) \leq \kappa^d 2^{d-1} \boldsymbol{\lambda}\big(\mathcal{P}_d (\ell^N_\infty)\big)\,.
\]
\end{lemma}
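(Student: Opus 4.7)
The plan is to lift $P$ to the complex polytorus $\mathbb{T}^N$, apply the Ortega-Cerd\`a--Ouna\"{\i}es--Seip projection estimate there, and then restrict back to the Boolean cube. Given $P(x)=\sum_{|\alpha|=d}c_\alpha x^\alpha \in \mathcal{P}_d(\ell_\infty^N)$, I would extend it to a $d$-homogeneous holomorphic polynomial on $\mathbb{C}^N$ and invoke a classical real-to-complex Chebyshev-type inequality for $d$-homogeneous polynomials with real coefficients, namely
\[
\|P\|_{\mathbb{T}^N}=\|P\|_{\mathbb{D}^N}\leq 2^{d-1}\,\|P\|_{[-1,1]^N}.
\]
Here the first equality is the maximum modulus principle, while the factor $2^{d-1}$ (already sharp for $P(x,y)=x^2-y^2$ in degree $d=2$) can be obtained by a one-variable Chebyshev argument tensorized across coordinates.

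Next I would apply the theorem of Ortega-Cerd\`a--Ouna\"{\i}es--Seip \cite{ortega2009sidon}: for every $d$-homogeneous polynomial $P$ on $\mathbb{T}^N$, the projection $\mathbf{Q}$ annihilating the non-tetrahedral coefficients satisfies $\|\mathbf{Q}P\|_{\mathbb{T}^N}\leq\kappa^d\,\|P\|_{\mathbb{T}^N}$, with $\kappa$ as in \eqref{kappa}. Since $\mathbf{Q}P$ is a real tetrahedral (hence multilinear in each coordinate) $d$-homogeneous polynomial, its supremum on $[-1,1]^N$ is attained at a vertex of the cube, so
\[
\|\mathbf{Q}P\|_{\mathcal{B}^N_{=d}}=\|\mathbf{Q}P\|_{\{-1,+1\}^N}=\|\mathbf{Q}P\|_{[-1,1]^N}\leq\|\mathbf{Q}P\|_{\mathbb{T}^N}\leq\kappa^d\|P\|_{\mathbb{T}^N}\leq\kappa^d\,2^{d-1}\|P\|_{[-1,1]^N},
\]
which is the claimed operator norm bound for $\mathbf{Q}:\mathcal{P}_d(\ell_\infty^N)\to\mathcal{B}^N_{=d}$.

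For the ``in particular'' statement about projection constants, the canonical inclusion $\iota:\mathcal{B}^N_{=d}\hookrightarrow\mathcal{P}_d(\ell_\infty^N)$ sending a Boolean function to its tetrahedral polynomial representative is an isometry (by the same multilinearity observation) and trivially $\mathbf{Q}\circ\iota=\mathrm{id}_{\mathcal{B}^N_{=d}}$. The ideal property of the $\gamma_\infty$-norm (cf.\ \eqref{gammainfty}) applied to the factorization $\mathrm{id}_{\mathcal{B}^N_{=d}}=\mathbf{Q}\circ\mathrm{id}_{\mathcal{P}_d(\ell_\infty^N)}\circ\iota$ then yields
\[
\boldsymbol{\lambda}(\mathcal{B}^N_{=d})\leq\|\iota\|\cdot\|\mathbf{Q}\|\cdot\boldsymbol{\lambda}(\mathcal{P}_d(\ell_\infty^N))\leq\kappa^d\,2^{d-1}\,\boldsymbol{\lambda}(\mathcal{P}_d(\ell_\infty^N)).
\]
The main obstacle is pinning down the sharp constant $2^{d-1}$ in the real-to-complex step; while folkloric for $d$-homogeneous real polynomials, a careful one-variable Chebyshev argument followed by a coordinate-by-coordinate reduction is needed to extract it without introducing spurious multiplicative factors.
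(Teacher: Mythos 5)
Your proposal is correct and essentially follows the paper's route: the factor $\kappa^d$ comes from the Ortega-Cerd\`a--Ouna\"{\i}es--Seip averaging mechanism annihilating non-tetrahedral indices, and the factor $2^{d-1}$ is exactly Visser's real-to-complex inequality \cite{visser1946generalization} for $d$-homogeneous polynomials, which the paper cites by name while you treat it as a folklore Chebyshev estimate. The only presentational difference is that you invoke the OOS projection bound on $\mathbb{T}^N$ as a black box and then descend $\mathbb{T}^N\to\overline{\mathbb{D}}^N\to[-1,1]^N\to\{-1,+1\}^N$, whereas the paper reproduces the averaging construction (the function $r_d$ on $[0,1]^{\pi(d)}$ with $\int r_d=1$, $\int r_d^k=0$ for $2\le k\le d$, $|r_d|\le\kappa$) and directly bounds $\mathbf{Q}P(x)=\int P_{\mathbb{C}}(x_1 r_d(t^1),\ldots,x_N r_d(t^N))\,d\mu$ for real $x\in B_{\ell_\infty^N}$; both yield the same constant $\kappa^d 2^{d-1}$. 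Your factorization argument for the ``in particular'' statement via $\mathrm{id}_{\mathcal{B}^N_{=d}}=\mathbf{Q}\circ\mathrm{id}_{\mathcal{P}_d(\ell_\infty^N)}\circ\iota$ and the ideal property of $\gamma_\infty$ is also what the paper leaves implicit. One small caution: the phrase ``one-variable Chebyshev argument tensorized across coordinates'' undersells Visser's theorem; a literal coordinate-by-coordinate application of the one-variable bound would give a factor $2^{\sum_i d_i-\#\{i:d_i\ge 1\}}$ depending on the partial degrees and does not directly produce the uniform $2^{d-1}$ for general $d$-homogeneous polynomials, so one really does need to cite (or carefully reprove) Visser's result here rather than rely on tensorization alone.
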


\begin{proof}
As usual, we write $\pi(x)$ for the counting function of the prime numbers.  Now, given
$$t=(t_1,\ldots,t_{\pi(d)})  \in  \mathcal{R} :=[0,1]^{\pi(d)}\,,$$
  define
\[
r_d(t)=c_d \exp \left(2\pi i\Bigl(\frac {t_1}2 + \frac {t_2}3+\cdots+ \frac
{t_{\pi(d)}}{\mathfrak{p}_{\pi(d)}}\Bigr)\right),
\]
where
\[
c_d=\prod_{k=1}^{\pi(d)} \left(\frac {\mathfrak{p}_k}{2\pi i}
\Bigl(e^{\frac {2\pi i}{\mathfrak p_k}} -1\Bigr)\right)^{-1}.
\]
Note that the function $r_d\colon \mathcal{R}\to \mathbb{C}$ has the following properties:
\begin{enumerate}
\item[(i)] $\int_{\mathcal{R}} r_d(t)\,d\mu(t)=1$,
\item[(ii)] $\int_{\mathcal{R}} r_d^k(t)\, d\mu(t)=0$\ \ for each \, $2 \leq k \leq d$,
\item[(iii)] $|r_d(t)|\le \kappa$\ \ for all $t \in \mathcal{R}$;
\end{enumerate}
here $d\mu$ denotes the  Lebesgue measure on $\mathcal{R}$.
Indeed,  (i) and (ii)
are trivial and follow by the definition of the function, and (iii) holds because
$|r_d(t)| = |c_d|$ and
\[
|c_d|^{-2}= \prod_{k=1}^{\pi(d)} \frac{\mathfrak{p}_k^2}{(2\pi)^2}
\Bigl|e^{\frac{2\pi i}{\mathfrak{p}_k}}-1\Bigr|^2
 = \prod_{k=1}^{\pi(d)} \sinc^2\frac{\pi}{\mathfrak{p}_k}.
\]
Take now some  $ P \in \mathcal{P}_d (\ell^N_\infty)$ and a representation
$Px = \sum_{|\alpha| = d} c_\alpha x^\alpha, \, x \in \mathbb{R}^N$. Then  by the properties (i) and (ii) we have
\[
\mathbf{Q}P(x)=\int_{{\mathcal{R}}^N} P_{\mathbb{C}}(x_1r_d(t^1),\ldots, x_n r_d(t^N))\,
d\mu(t^1)\cdots d\mu(t^n)\,,\,\,\, x \in \ell^N_\infty.
\]
where $P_\mathbb{C}(z) = \sum_{|\alpha| = d} c_\alpha z^\alpha, \, z \in \mathbb{C}^N$.
By (iii) we deduce that
$$|P_\mathbb{C}(x_1 r_d(t^1),\ldots, x_N
r_d(t^N))| \le \kappa^d \|P_\mathbb{C}\|_{\mathcal{P}_d (\ell^N_\infty(\mathbb{C}))}$$
for every $x~\in~B_{\ell^N_\infty}$, and therefore
\[
\Vert \mathbf{Q}(P) \Vert_{\mathcal{B}^N_{=d}} \leq \kappa^d \,\Vert P_\mathbb{C} \Vert_{\mathcal{P}_d (\ell^N_\infty(\mathbb{C}))}.
\]
But by a result of Visser \cite{visser1946generalization} we know that
\[
\Vert P_\mathbb{C} \Vert_{\mathcal{P}_d (\ell^N_\infty(\mathbb{C}))} \leq
2^{d-1} \Vert P \Vert_{\mathcal{P}_d (\ell^N_\infty)}\,.
\]
All together this  proves the first statement; the second one is then an immediate consequence.
\end{proof}

\subsection{Sidon  vs Gordon--Lewis constant} \label{GL}
The following fact is the first of two major steps towards the proof of Theorem~\ref{BGL2}.

\begin{theorem} \label{BGL1}
Let $2 \leq d \leq N$ and $\mathcal{S} \subset \big\{ S \subset [N] \colon |S| = d \big\}$. Then
 \begin{equation}
\boldsymbol{g\!l}\big(\mathcal{B}^N_{\mathcal{S}}\big)\,\,
  \leq\,\,
   \boldsymbol{\chi}\big(\mathcal{B}^N_{\mathcal{S}}\big)
    \,\,\leq\,\,
   \boldsymbol{\sid}\big(\mathcal{B}^N_{\mathcal{S}}\big) \,\,\leq\,\,
  e^{d}\boldsymbol{g\!l}\big(\mathcal{B}^N_{\mathcal{S}}\big)\,.
\end{equation}
 Additionally, if $\mathcal{S} \subset \big\{ S \subset [N] \colon |S| \leq d \big\}$ then 
  \begin{equation}
 \boldsymbol{g\!l}\big(\mathcal{B}^N_{\mathcal{S}}\big)\,\,
  \leq\,\,
   \boldsymbol{\chi}\big(\mathcal{B}^N_{\mathcal{S}}\big)
    \,\,\leq\,\,
   \boldsymbol{\sid}\big(\mathcal{B}^N_{\mathcal{S}}\big) \,\,\leq\,\,
  (2.69076)^{2d}\boldsymbol{g\!l}\big(\mathcal{B}^N_{\mathcal{S}}\big)\,.
  \end{equation}

\end{theorem}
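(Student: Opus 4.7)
The two leftmost inequalities are immediate. The Gordon--Lewis inequality \eqref{gl-inequality}, applied to $X = \mathcal{B}^N_{\mathcal{S}}$, gives $\boldsymbol{g\!l}(X) \le \boldsymbol{\chi}(X)$; since $\boldsymbol{\chi}(X)$ is by definition the infimum of $\boldsymbol{\chi}((e_i))$ over all unconditional bases of $X$, and since the preliminaries identify $\boldsymbol{\sid}(\mathcal{B}^N_{\mathcal{S}}) = \boldsymbol{\chi}((\chi_S)_{S\in\mathcal{S}})$ with one specific such constant, we also obtain $\boldsymbol{\chi}(X) \le \boldsymbol{\sid}(\mathcal{B}^N_{\mathcal{S}})$.

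The serious content is the rightmost inequality $\boldsymbol{\sid}(\mathcal{B}^N_{\mathcal{S}}) \le e^d\,\boldsymbol{g\!l}(\mathcal{B}^N_{\mathcal{S}})$ (respectively $(2.69076)^{2d}\,\boldsymbol{g\!l}(\mathcal{B}^N_{\mathcal{S}})$ in the non-homogeneous case). The plan is to use the hypercontractive estimate \eqref{homogeneousweissler} (respectively \eqref{weissler}) twice, together with a Gordon--Lewis factorization of the Fourier operator. Write $X=\mathcal{B}^N_{\mathcal{S}}$ and introduce $F\colon X\to \ell_2^{|\mathcal{S}|}$, $\chi_S\mapsto e_S$. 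By Parseval $\|F\|\le 1$, and identifying $\{-1,+1\}^N$ with point-evaluations $\delta_x\in B_{X^*}$ turns the Haar measure $\mu_N$ into a probability measure on $B_{X^*}$ satisfying
\[
\|Ff\|_{\ell_2}=\|f\|_{L^2}\le e^{d/2}\|f\|_{L^1}=e^{d/2}\int_{\{-1,+1\}^N}|\delta_x(f)|\,d\mu_N(x).
\]
By Pietsch's domination theorem, $\pi_1(F)\le e^{d/2}$, and the defining property of $\boldsymbol{g\!l}(X)$ then gives $\gamma_1(F)\le e^{d/2}\,\boldsymbol{g\!l}(X)$: there exist $v\colon X\to L_1(\nu)$ and $w\colon L_1(\nu)\to \ell_2^{|\mathcal{S}|}$ with $F=w\circ v$ and $\|v\|\,\|w\|\le e^{d/2}\,\boldsymbol{g\!l}(X)$.

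To convert this factorization into a bound on $\boldsymbol{\chi}((\chi_S);X)=\sup_\varepsilon \|M_\varepsilon\colon X\to X\|$, where $M_\varepsilon\chi_S=\varepsilon_S\chi_S$, decompose $M_\varepsilon=F^{-1}\circ D_\varepsilon\circ F$ with $\|D_\varepsilon\|_{\ell_2\to\ell_2}=1$. A first observation is that the composition $D_\varepsilon\circ w\colon L_1(\nu)\to\ell_2^{|\mathcal{S}|}$ has the same norm as $w$: writing $wg=(\phi_S(g))_S$ with $\phi_S\in L_\infty(\nu)$ one computes $\|(D_\varepsilon w)g\|_{\ell_2}^2=\sum_S\phi_S(g)^2=\|wg\|_{\ell_2}^2$. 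The remaining task is then to bound the synthesis map $F^{-1}\circ D_\varepsilon\circ w\colon L_1(\nu)\to X$ by a universal constant times $\|w\|$. This is where a second, dual application of hypercontractivity produces an additional factor $e^{d/2}$, yielding the total $e^{d}\,\boldsymbol{g\!l}(X)$.

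The main obstacle is precisely this last step: establishing a dual hypercontractive bound on the synthesis side. One natural route is to dualize the whole argument and apply Pietsch domination to $(F^{-1})^*\colon X^*\to \ell_2$ using the functional analogue of \eqref{homogeneousweissler} (which controls Walsh-coefficient sequences of functionals on $X$ represented in the quotient $L_1(\mu_N)/X^\perp$), then combine both estimates through the ideal property \eqref{niceprop} of $\boldsymbol{g\!l}$. The non-homogeneous case $|S|\le d$ follows the same blueprint with \eqref{weissler} in place of \eqref{homogeneousweissler} at both applications, replacing each factor $e^{d/2}$ by $(2.69076)^{d}$ and giving the overall $(2.69076)^{2d}\,\boldsymbol{g\!l}(\mathcal{B}^N_{\mathcal{S}})$ bound.
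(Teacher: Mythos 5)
The first two inequalities are handled correctly and in the same way as the paper, and your opening moves for the third inequality also match: identify $F = I$, $\chi_S \mapsto e_S$, and use the hypercontractive inequalities to obtain $\pi_1(F) \le e^{d/2}$ (resp.\ $(2.69076)^d$), exactly as in Lemma~\ref{abssum}. From that point on, however, your approach diverges from the paper's and runs into a genuine gap.

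You factor $F = w\circ v$ through $L_1(\nu)$ using the Gordon--Lewis property, write $M_\varepsilon = (F^{-1} D_\varepsilon w)\circ v$, and then need to bound the synthesis map $F^{-1} D_\varepsilon w\colon L_1(\nu)\to X$ by a universal multiple of $\|w\|$. You acknowledge this is the obstacle and propose to close it by a ``dual Pietsch domination'' of the form $\pi_1\bigl((F^{-1})^*\colon X^*\to\ell_2\bigr)\le e^{d/2}$. This intermediate statement is false in general. Indeed, $(F^{-1})^*\psi_S = e_S$, so taking $\varphi_j=\psi_{S_j}$ over all $S_j\in\mathcal{S}$ with $m=|\mathcal{S}|$ gives
\[
\sum_{j=1}^m \bigl\|(F^{-1})^*\varphi_j\bigr\|_{\ell_2} = m,
\qquad
\sup_{f\in B_X}\sum_{j=1}^m |\varphi_j(f)| = \boldsymbol{\sid}\bigl(\mathcal{B}^N_{\mathcal{S}}\bigr) \lesssim \sqrt{m},
\]
so the purported bound would force $m\le e^{d/2}\cdot C\sqrt{m}$, which fails as soon as $|\mathcal{S}|\gg e^{d}$. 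In other words, the very constant you are trying to bound appears on the right-hand side of the dual $\pi_1$ estimate, making the argument circular/impossible for large supports. A single application of hypercontractivity on the dual side cannot survive.

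The paper sidesteps this obstacle in a structurally different way. Instead of manipulating the gl-factorization of $F$ directly, it invokes the Pisier--Sch\"utt criterion (Lemma~\ref{tool2}): if the diagonal operators $\tilde D_\lambda\colon X\to\ell_2$ and $\tilde D_\mu^*\colon X^*\to\ell_2$ satisfy $\pi_1(\tilde D_\lambda)\le K_1\|\sum\lambda_A\psi_A\|_{X^*}$ and $\pi_1(\tilde D_\mu^*)\le K_2\|\sum\mu_A\chi_A\|_{X}$, then $\boldsymbol{\chi}((\chi_A);X)\le K_1K_2\,\boldsymbol{g\!l}(X)$. To verify the two $\pi_1$ hypotheses the paper factors each diagonal through the Fourier map $I\colon X\to\ell_2$ (so that $\pi_1\le e^{d/2}\cdot\|\text{diagonal on }X\text{ or }X^*\to X\|$) and then, crucially, estimates the operator norms $\|D_\lambda\colon X\to X\|$ and $\|D_\mu^*\colon X^*\to X\|$ by the natural dual quantities \eqref{tonelli}--\eqref{hobson} via the translation $D_x\colon y\mapsto xy$ and the covariance $\chi_A\circ D_x = x^A\chi_A$. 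This translation-invariance trick is the structural ingredient your proposal is missing: it is what allows the hypercontractive $\pi_1$ bound to be applied \emph{once on each of the two diagonal operators} (for total $e^{d/2}\cdot e^{d/2}=e^d$), rather than once on $F$ and a second, nonexistent time on its ``dual synthesis.'' Without something replacing Lemma~\ref{tool2} and the covariance identity, the last step of your argument does not go through.
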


Observe  that the first  estimate in Theorem~\ref{BGL1} is immediate  from the Gordon--Lewis inequality as formulated in  \eqref{gl-inequality}, and the second one is trivial.

The proof of the third estimate is  more involved and needs preparation. The first  lemma needed  is taken from \cite[Proposition 21.14]{defant2019libro} (its roots have to  be traced back to  the works \cite{pisier1978some} and \cite{schutt1978projection}).

\begin{lemma} \label{tool2}
Let $X_n$  be an $n$-dimensional Banach space with a~basis  $(x_k)_{k=1}^n$, and  denote the coefficient functionals of this  basis by $(x_k^\ast)$. Suppose that there exist constants $K_1,K_2\ge 1 $ such that for every choice of $\lambda, \mu \in \mathbb{C}^n$ the two diagonal
operators
\begin{align*}
&
 D_\lambda\colon  X_n \rightarrow \ell_2^n\,, \quad \, x_k \mapsto \lambda_k e_k
  \\
  &
 D_\mu^{*} \colon  X_n^{*} \rightarrow \ell_2^n \,, \quad \,x^{*}_k \mapsto \mu_k e_k
 \end{align*}
satisfy
$
\pi_1(D_\lambda)  \le K_1 \Big\| \sum_{k=1}^n \lambda_k x_k^{*}  \Big\|_{X_n^{*}}
$
and
$
\pi_1(D_\mu^{*})  \le K_2 \Big\| \sum_{k=1}^n \mu_k x_k  \Big\|_{X_n}\,.
$
Then
\[
 \boldsymbol{\chi}\big((x_k),X_n\big) \le K_1  K_2  \,\boldsymbol{g\!l}(X_n)\,.
\]
\end{lemma}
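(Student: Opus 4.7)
The plan is to prove $\|T_\varepsilon\colon X_n \to X_n\|\le K_1K_2\,\boldsymbol{g\!l}(X_n)$ for every sign pattern $\varepsilon\in\{-1,+1\}^n$, where $T_\varepsilon$ is the signed-diagonal operator $x_k\mapsto\varepsilon_k x_k$; by the very definition of $\boldsymbol{\chi}((x_k),X_n)$ this gives the claim. By the Hahn--Banach theorem, $\|T_\varepsilon\|$ is realised as the supremum of $|\langle y^*,T_\varepsilon x\rangle|=\bigl|\sum_k\varepsilon_k b_k c_k\bigr|$ over $x=\sum c_k x_k$ with $\|x\|_{X_n}\le 1$ and $y^*=\sum b_k x_k^{*}$ with $\|y^*\|_{X_n^{*}}\le 1$, so the task reduces to bounding this bilinear expression.

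Apply the first hypothesis with $\lambda=b$ to obtain $\pi_1(D_b\colon X_n\to\ell_2^n)\le K_1\|y^*\|\le K_1$, and invoke the Gordon--Lewis property of $X_n$ to upgrade this to a factorization $D_b=\beta\circ\alpha$ with $\alpha\in\mathcal{L}(X_n,L_1(\nu))$ and $\beta\in\mathcal{L}(L_1(\nu),\ell_2^n)$ satisfying $\|\alpha\|\,\|\beta\|\le K_1\,\boldsymbol{g\!l}(X_n)$. Apply the second hypothesis with $\mu=c$ to get $\pi_1(D_c^{*}\colon X_n^{*}\to\ell_2^n)\le K_2\|x\|\le K_2$; this estimate transfers to the predual $(D_c^{*})^{*}\colon\ell_2^n\to X_n$. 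The bilinear expression is then, up to the sign-twist, the trace of the diagonal operator $D_{\varepsilon b}\circ(D_c^{*})^{*}\colon\ell_2^n\to\ell_2^n$; since $D_{\varepsilon b}$ differs from $D_b$ only by composition with the diagonal isometry $D_\varepsilon$ on $\ell_2^n$, all of $\pi_1$, $\gamma_1$ and the $L_1$-factorization constants are preserved. Using cyclicity of the trace, the quantity equals $\operatorname{tr}\bigl(\alpha\circ(D_c^{*})^{*}\circ\beta\colon L_1(\nu)\to L_1(\nu)\bigr)$, and a trace--duality estimate bounding $|\operatorname{tr}(\cdot)|$ by $\pi_1(D_c^{*})\cdot\|\alpha\|\cdot\|\beta\|$ yields $\bigl|\sum_k\varepsilon_k b_k c_k\bigr|\le K_1K_2\,\boldsymbol{g\!l}(X_n)$, which is exactly what we need.

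The hardest step will be the trace inequality on $L_1(\nu)$. I would exploit Grothendieck's theorem, which guarantees that the bounded operator $\beta\colon L_1(\nu)\to\ell_2^n$ is automatically $2$-summing and hence Hilbert--Schmidt when viewed into $\ell_2^n$; combined with $\pi_1(D_c^{*})\le K_2$ and Pietsch's factorization theorem applied to $D_c^{*}$ (routing it through a suitable $C(K)$-space), this should show that $\alpha\circ(D_c^{*})^{*}\circ\beta$ is nuclear with nuclear norm at most $K_1K_2\,\boldsymbol{g\!l}(X_n)$. The delicate point is to match the trace bound with $\pi_1(D_c^{*})$ itself—rather than a weaker $\pi_2$-norm that would introduce a spurious Grothendieck constant, or a Gordon--Lewis factor for the dual space $X_n^{*}$—so that the factor $\boldsymbol{g\!l}(X_n)$ appears exactly once, as asserted.
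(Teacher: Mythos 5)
The paper does not prove this lemma; it cites \cite[Proposition 21.14]{defant2019libro} (with roots in \cite{pisier1978some,schutt1978projection}), so there is no internal proof to compare against. Your architecture is nonetheless the standard one and is correct in outline: reduce $\boldsymbol{\chi}((x_k),X_n)$ to bounding $\bigl|\sum_k \varepsilon_k b_k c_k\bigr|$ over $\|\sum b_kx_k^*\|\le 1$, $\|\sum c_kx_k\|\le 1$; recognise this quantity as $\operatorname{tr}\bigl(D_{\varepsilon b}(D_c^*)^*\colon\ell_2^n\to\ell_2^n\bigr)$; apply the Gordon--Lewis property only to $D_b$ (so that $\boldsymbol{g\!l}(X_n)$, and not $\boldsymbol{g\!l}(X_n^*)$, enters); and combine a cyclicity-of-trace argument with $\pi_1(D_c^*)\le K_2$. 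The inequality you aim for, $\bigl|\operatorname{tr}(\alpha(D_c^*)^*\beta)\bigr|\le\pi_1(D_c^*)\,\|\alpha\|\,\|\beta\|$, is exactly the right target.

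The genuine gap is in the final step, and you flag it yourself without resolving it. The route you sketch---Grothendieck's theorem to make $\beta\colon L_1(\nu)\to\ell_2^n$ $2$-summing, then a Pietsch factorization of $D_c^*$---would, if it closes at all, bring in the Grothendieck constant $K_G$, which the statement does not permit; the sketch does not explain how to avoid that. The correct closing move requires neither Grothendieck nor Pietsch. Pass to adjoints: in the finite-dimensional setting $\operatorname{tr}\bigl(D_{\varepsilon b}(D_c^*)^*\bigr)=\operatorname{tr}\bigl(D_{\varepsilon b}^*D_c^*\colon X_n^*\to X_n^*\bigr)$, and the trace duality between the ideals of $1$-summing and $\infty$-factorable operators gives $\bigl|\operatorname{tr}(D_{\varepsilon b}^*D_c^*)\bigr|\le\gamma_\infty(D_{\varepsilon b}^*)\,\pi_1(D_c^*)$. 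Since $\gamma_\infty(D_{\varepsilon b}^*)=\gamma_1(D_{\varepsilon b})=\gamma_1(D_b)\le\boldsymbol{g\!l}(X_n)\,\pi_1(D_b)\le K_1\,\boldsymbol{g\!l}(X_n)$, this yields $K_1K_2\,\boldsymbol{g\!l}(X_n)$ with no extraneous factor. (Equivalently, in your $\ell_1^m$ picture: for the adjoint $T^*=\beta^*D_c^*\alpha^*\colon\ell_\infty^m\to\ell_\infty^m$, the nuclear norm equals $\pi_1(T^*)$ because the domain is a $C(K)$-space and the range is finite-dimensional, and then the ideal property of $\pi_1$ gives $\pi_1(T^*)\le\|\beta\|\,\pi_1(D_c^*)\,\|\alpha\|$.)
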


The second lemma is an almost immediate consequence of the so-called hypercontractivity of functions on the Boolean cube.

\begin{lemma} \label{abssum}
Let $2 \leq d \leq N$ and $\mathcal{S} \subset \big\{ S \subset [N] \colon |S| = d \big\}$. Then 
  \begin{equation}\label{onesummingB}
   \pi_1 \big(I \colon  \mathcal{B}^N_\mathcal{S} \to  \ell_2(\mathcal{S}) \big)  \leq e^{\frac{d}{2}}\,,
\end{equation}
where $I(f) = (\hat{f}(S))_{S \in \mathcal{S}}$ for $f \in \mathcal{B}^N_\mathcal{S} $. More generally, if $\mathcal{S} \subset \big\{ S \subset [N] \colon |S| \leq d \big\}$, then 
  \begin{equation}\label{onesummingA}
   \pi_1 \big(I \colon  \mathcal{B}^N_\mathcal{S} \to  \ell_2(\mathcal{S}) \big)  \leq (2.69076)^d\,.
\end{equation}

\end{lemma}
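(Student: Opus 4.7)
The plan is to factor the canonical map $I \colon \mathcal{B}^N_\mathcal{S} \to \ell_2(\mathcal{S})$ through the formal inclusion into $L_1(\{-1,+1\}^N)$, so that hypercontractivity bounds the second factor in norm while the first factor turns out to be $1$-summing essentially for free. Specifically, I would write $I = \tilde{I} \circ j$, where
\[
j \colon \mathcal{B}^N_\mathcal{S} \hookrightarrow L_1(\{-1,+1\}^N)
\]
is the identity as a map of vector spaces (the codomain carrying the $L_1$-norm), and $\tilde{I}$ sends $f \mapsto (\hat f(S))_{S \in \mathcal{S}}$. The ideal property of $\pi_1$ then yields $\pi_1(I) \le \|\tilde{I}\| \cdot \pi_1(j)$, and I plan to show that each factor absorbs exactly one of the two ingredients behind the desired estimates.

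The norm of $\tilde{I}$ is immediate from Parseval and the hypercontractive inequalities already recorded in the paper: since $\|\tilde{I} f\|_{\ell_2} = \|f\|_{L_2}$, estimate \eqref{homogeneousweissler} gives $\|\tilde I\| \le e^{d/2}$ in the homogeneous case $\mathcal{S} \subset \{|S| = d\}$, while estimate \eqref{weissler} gives $\|\tilde I\| \le (2.69076)^d$ when $\mathcal{S} \subset \{|S| \le d\}$. This is precisely where the two distinct constants in \eqref{onesummingB} and \eqref{onesummingA} come from, and no combinatorial work is needed at this step.

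To check that $\pi_1(j) \le 1$, I would argue directly from the definition of the $1$-summing norm. For arbitrary $f_1, \ldots, f_n \in \mathcal{B}^N_\mathcal{S}$, Fubini gives
\[
\sum_{k=1}^n \|j f_k\|_{L_1} = \mathbb{E}\Big[\sum_{k=1}^n |f_k|\Big] \le \sup_{x \in \{-1,+1\}^N} \sum_{k=1}^n |f_k(x)|\,,
\]
and since each point-evaluation $\delta_x$ restricts to a norm-one functional on $\mathcal{B}^N_\mathcal{S}$, the right-hand side is bounded by the weak $\ell_1$-norm $\sup_{\|\phi\|\le 1} \sum_k |\phi(f_k)|$, exactly what the definition requires. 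There is no genuine obstacle in the argument; the whole proof is a clean two-factor composition, with all the substantive analytic content deferred to the hypercontractive inequalities \eqref{homogeneousweissler} and \eqref{weissler} which are cited as black boxes.
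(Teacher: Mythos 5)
Your proof is correct and is in substance the same argument the paper gives; the paper simply unrolls the composition rather than naming the factorization $I=\tilde I\circ j$ explicitly, chaining Parseval, hypercontractivity, Fubini and the pointwise bound in one display. One small point worth being explicit about: for the ideal estimate $\pi_1(I)\le\|\tilde I\|\,\pi_1(j)$ to yield the stated bound, $\tilde I$ must be taken as a map from the \emph{range} of $j$ (i.e.\ $\mathcal{B}^N_{\mathcal{S}}$ equipped with the $L_1$-norm) to $\ell_2(\mathcal{S})$, not from all of $L_1(\{-1,+1\}^N)$; on the full $L_1$ the operator $f\mapsto(\hat f(S))_{S\in\mathcal{S}}$ has norm $\sup_{\|a\|_{\ell_2}\le1}\bigl\|\sum_S a_S\chi_S\bigr\|_\infty$, which already for $\mathcal{S}$ a set of singletons grows like $\sqrt{|\mathcal{S}|}$. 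Your identity $\|\tilde I f\|_{\ell_2}=\|f\|_{L_2}$ implicitly assumes $f\in\mathcal{B}^N_{\mathcal{S}}$, so you are in fact using the restricted operator, and the argument goes through.
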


\begin{proof}
Suppose that $\mathcal{S} \subset \big\{ S \subset [N] \colon |S| = d \big\}$ and take finitely many $f_1, \dots, f_M \in \mathcal{B}^N_\mathcal{S} $. 
Then by~\eqref{homogeneousweissler} we have
\begin{align*}
 \sum_{k=1}^{M}  \|I f_k\|_{\ell_2(\mathcal{S})}
 &
 =
  \sum_{k=1}^{M}  \|f_k\|_{L_2(\{-1,+1\}^N)}
  \\&
  \leq
  e^{\frac{d}{2}} \sum_{k=1}^{M}  \|f_k\|_{L_1(\{-1,+1\}^N)} = e^{\frac{d}{2}} \sum_{k=1}^{M} \mathbb{E} [|f_k|]  \\&
  =  e^{\frac{d}{2}} \mathbb{E} [\sum_{k=1}^{M}|f_k|] \leq
  e^{\frac{d}{2}}  \sup_{x \in \{-1,+1\}^N} \sum_{k=1}^{M}  |f_k(x)|
       \leq
  e^{\frac{d}{2}}
    \sup_{\varphi \in B_{(\mathcal{B}^N_\mathcal{S})^\ast}}  \sum_{k=1}^{M}  |\varphi(f_k)|\,,
\end{align*}
which gives~\eqref{onesummingB}. With the same argument and the use of~\eqref{weissler} instead of~\eqref{homogeneousweissler} we obtain~\eqref{onesummingA}.
\end{proof}

\begin{proof}[Proof of Theorem~\ref{BGL1}]
As explained above, we may concentrate on the third  estimate. For  $A \in \mathcal{S}$ we write
$
\psi_A \colon  \mathcal{B}^N_\mathcal{S} \rightarrow \mathbb{C}
$
for the coefficient functionals of the canonical basis $(\chi_A)_{A \in \mathcal{S}}$ in $\mathcal{B}^N_\mathcal{S}$.  They
 form the orthogonal  basis of the  dual $(\mathcal{B}^N_\mathcal{S})^\ast$ in  the sense that
\[
\langle \chi_A, \psi_B \rangle_{\mathcal{B}^N_\mathcal{S},(\mathcal{B}^N_\mathcal{S})^\ast} = \delta_{A,B}\,.
\]
Given  two real sequences  $\lambda= (\lambda_A)_{A \in \mathcal{S}}$ and $(\mu_{A})_{A \in \mathcal{S}}$,  we consider the two  diagonal operators
\begin{align*}
&
D_\lambda:   \mathcal{B}^N_\mathcal{S} \longrightarrow  \mathcal{B}^N_\mathcal{S}\,,
\quad \, D_\lambda(\chi_A) =  \lambda_A \chi_A
\\&
D_\mu^{*} \colon (\mathcal{B}^N_\mathcal{S})^\ast   \longrightarrow \mathcal{B}^N_\mathcal{S} \,,
\quad \, D_\mu^{*}(\psi_A) =  \mu_A \chi_A\,,
\end{align*}
and show that
\begin{gather}
\|D_\lambda \| \leq \Big\| \sum_{A \in \mathcal{S}} \lambda_A \psi_A \Big\|_{(\mathcal{B}^N_\mathcal{S})^\ast} \label{tonelli} \\
\|D_\mu^{*} \| \leq \Big\| \sum_{A \in \mathcal{S}} \mu_A \chi_A \Big\|_{\mathcal{B}^N_\mathcal{S} }  \,. \label{hobson}
\end{gather}
If we combine these two estimates with  Lemma~\ref{abssum}, together with the ideal property of the $\pi_1$-norm  and also Lemma~\ref{tool2}, then the conclusion follows.
Let us prove~\eqref{tonelli}. Define for
$x \in \{-1, +1\}^N $ the diagonal map
\[
D_x\colon \{-1,+1\}^N \rightarrow \{-1, +1\}^N,\,\,\, y \mapsto xy\,,
\]
and note that
$
\chi_A\circ D_x = x^A \chi_A
$
for every $A \in \mathcal{S}$.
Then for  $x \in \{-1,+1\}^N$ and $f=\sum_{A \in
\mathcal{S}}  \widehat{f}(A)  \chi_A \in \mathcal{B}^N_\mathcal{S}$ we get
\begin{align*}
\Big| \Big[D_\lambda \big( \sum_{\mathcal{S}}  \widehat{f}(A)  \chi_A \big)\Big] (x) \Big|
& = \Big| \sum_{\mathcal{S}} \lambda_A  \widehat{f}(A)  \chi_A(x)\Big| = \Big| \sum_{\mathcal{S}} \lambda_A  \widehat{f}(A)  x^A\Big| \\
& = \Big|  \langle \sum_{\mathcal{S}}  \widehat{f}(A) x^A \chi_A,  \sum_{\mathcal{S}} \lambda_A \psi_A\rangle _{\mathcal{B}^N_\mathcal{S},(\mathcal{B}^N_\mathcal{S})^\ast} \Big|\\
& = \Big|  \langle  \sum_\mathcal{S}  \widehat{f}(A) \chi_A \circ D_x ,  \sum_{\mathcal{S}} \lambda_A \psi_A\rangle _{\mathcal{B}^N_\mathcal{S},(\mathcal{B}^N_\mathcal{S})^\ast} \Big|
\\&
\leq
\Big\| \sum_{\mathcal{S}} \widehat{f}(A) \chi_A \circ D_x     \Big\|_{\mathcal{B}^N_\mathcal{S}}  \Big\| \sum_{\mathcal{S}} \lambda_A \psi_A   \Big\|_{(\mathcal{B}^N_\mathcal{S})^{*}}
\\&
=
\Big\| \sum_{\mathcal{S}} \widehat{f}(A) \chi_A    \Big\|_{\mathcal{B}^N_\mathcal{S}}  \Big\| \sum_{\mathcal{S}} \lambda_A \psi_A   \Big\|_{(\mathcal{B}^N_\mathcal{S})^{*}}\,.
\end{align*}
Obviously, this leads to the estimate from~\eqref{tonelli}, and to see~\eqref{hobson} we simply repeat the argument.
This completes the proof.
\end{proof}

\subsection{Gordon--Lewis vs projection constant} \label{Gordon--Lewis vs projection constant}
With the following theorem we establish the  second  step for the proof of Theorem~\ref{BGL2}.
If one combines it with Theorem~\ref{BGL1}, the proof of Theorem~\ref{BGL2} is immediate.

\begin{theorem} \label{gl_versus_proj}
Let $2 \leq d \leq N$ and $\mathcal{S} \subset \big\{ S \subset [N] \colon |S| = d \big\}$. Then
\[
\boldsymbol{g\!l}\big(\mathcal{B}^N_{\mathcal{S}}\big) \,\,\leq\,\,
c(d) \,\,\big\|\mathbf{Q}\colon \mathcal{B}^N_{=d} \to \mathcal{B}^N_{\mathcal{S}}\big\|
\,\, \boldsymbol{\lambda}\big(\mathcal{B}^N_{\mathcal{S}^\flat}\big)\,,
\]
where  $c(d) \leq (2d) \,  \kappa^d 2^{d-1}$ and $\mathbf{Q}$ denotes the projection annihilating Fourier coefficients with indices $S$ not in~$\mathcal{S}$.
More generally, if $\mathcal{S} \subset \big\{ S \subset [N] \colon |S| \leq d \big\}$, then
\[
\boldsymbol{g\!l}\big(\mathcal{B}^N_{\mathcal{S}}\big) \,\,\leq\,\,
\widetilde{c}(d) \,\,\max_{1 \leq  k \leq m}\big\|\mathbf{Q}_k\colon \mathcal{B}^N_{=k} \to \mathcal{B}^N_{\mathcal{S}_{=k}}\big\|
\max_{1 \leq  k \leq d}  \boldsymbol{\lambda}\big(\mathcal{B}^N_{(\mathcal{S}_{=k})^\flat}\big)\,,
\]
where $\widetilde{c}(d) \leq   (d+1) (1+\sqrt{2})^d (2d) \kappa^d 2^{d}$ and $\mathbf{Q}_k$ denotes the projection annihilating Fourier
coefficients with indices $S$ not in~$\mathcal{S}_{=k}=\{S \in \mathcal{S} : \,|S|=k \}$.
\end{theorem}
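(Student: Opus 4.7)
The plan is to use the ideal property of the Gordon--Lewis constant from equation~\eqref{niceprop}: if $\mathrm{id}_X=uv$ with $v\colon X \to Z$ and $u\colon Z\to X$, then $\boldsymbol{g\!l}(X)\leq \|u\|\|v\|\boldsymbol{g\!l}(Z)$. Combined with the general bound $\boldsymbol{g\!l}(Y)\leq \boldsymbol{\lambda}(Y)$ valid for every Banach space $Y$ (since $Y$ embeds isometrically into some $C(K)$, where $\boldsymbol{g\!l}=1$, and is complemented there by a projection of norm $\boldsymbol{\lambda}(Y)$), it suffices to produce an intermediate space $Z$ built from $\mathcal{B}^N_{\mathcal{S}^\flat}$ with $\boldsymbol{g\!l}(Z)$ bounded by a universal multiple of $\boldsymbol{\lambda}(\mathcal{B}^N_{\mathcal{S}^\flat})$, together with a factorization of $\mathrm{id}_{\mathcal{B}^N_{\mathcal{S}}}$ through $Z$ whose factorization norm is at most $c(d)\|\mathbf{Q}\|$ in the homogeneous case.

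First I will treat the homogeneous case $\mathcal{S}\subset\{|S|=d\}$. The algebraic backbone is Euler's identity
\[
P_f(x) \,=\, \frac{1}{d}\sum_{i=1}^N x_i\,\partial_i P_f(x)
\]
for the tetrahedral polynomial $P_f$ associated to $f\in \mathcal{B}^N_{\mathcal{S}}$, noting that each $\partial_i P_f$ is tetrahedral of degree $d-1$ with Fourier--Walsh support inside $\mathcal{S}^\flat$, hence $\partial_i P_f\in \mathcal{B}^N_{\mathcal{S}^\flat}$, and, thanks to multi-affineness, $\|\partial_i P_f\|_\infty\leq \|f\|_\infty$. This suggests the candidate map $v(f)=(\partial_i P_f)_{i=1}^N$ and a candidate reconstruction $u((g_i))=\mathbf{Q}\circ\mathbf{Q}_{\mathrm{tet}}\bigl(\frac{1}{d}\sum_i x_i g_i\bigr)$, where $\mathbf{Q}_{\mathrm{tet}}\colon \mathcal{P}_d(\ell_\infty^N)\to \mathcal{B}^N_{=d}$ is the tetrahedralizing projection from Lemma~\ref{OrOuSe} of norm $\leq \kappa^d 2^{d-1}$ and $\mathbf{Q}$ is the Fourier projection onto $\mathcal{B}^N_{\mathcal{S}}$. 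The composition $uv$ is the identity on $\mathcal{B}^N_{\mathcal{S}}$ by Euler's identity (the two projections act trivially on elements already in $\mathcal{B}^N_{\mathcal{S}}$), and the factor $\kappa^d 2^{d-1}$ is absorbed by $\|u\|$ via Lemma~\ref{OrOuSe} (which internally also uses Visser's inequality).

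The main obstacle I foresee is choosing the intermediate norm on the $N$-tuples $(g_i)$ so that the product $\|u\|\|v\|$ picks up only a polynomial factor in $d$ (namely the advertised $2d$) rather than a spurious factor of $N$, while simultaneously keeping $\boldsymbol{g\!l}(Z)$ controlled by $\boldsymbol{\lambda}(\mathcal{B}^N_{\mathcal{S}^\flat})$. A naive $\ell_\infty^N$-direct sum yields $\|v\|\leq 1$ but lets $\|u\|$ grow like $N/d$, while $\ell_1^N$ produces the opposite imbalance; hence the correct $Z$ must be more subtle---likely a symmetric tensor or polarization-based construction that captures the Euler sum $\frac{1}{d}\sum x_i g_i$ in a way that exploits the tetrahedral structure of the $g_i$, so that Lemma~\ref{OrOuSe} can cancel the apparent $N$-dependence. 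Executing this balancing act, and verifying that the resulting $Z$ has Gordon--Lewis constant comparable to $\boldsymbol{g\!l}(\mathcal{B}^N_{\mathcal{S}^\flat})$ (for instance because $\boldsymbol{g\!l}$ is well-behaved under $\ell_\infty$-type sums), is the main technical content of the argument.

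For the general case $\mathcal{S}\subset\{|S|\leq d\}$, I will split any $f\in\mathcal{B}^N_{\mathcal{S}}$ into its $k$-homogeneous parts $f_k\in\mathcal{B}^N_{\mathcal{S}_{=k}}$ and apply Klimek's inequality~\eqref{klimek} to get $\|f_k\|_\infty\leq(1+\sqrt{2})^d\|f\|_\infty$ for each $k=0,\ldots,d$. Applying the homogeneous bound (with the appropriate projections $\mathbf{Q}_k$) to each $\mathcal{B}^N_{\mathcal{S}_{=k}}$ and summing the contributions introduces the extra factors $(d+1)(1+\sqrt{2})^d$ and a further factor of $2$ from combining the estimates, yielding the stated bound $\widetilde{c}(d)\leq (d+1)(1+\sqrt{2})^d(2d)\kappa^d 2^d$ and completing the argument.
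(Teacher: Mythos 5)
Your algebraic backbone is right---Euler's identity $P_f(x)=\frac{1}{d}\sum_i x_i\,\partial_i P_f(x)$ is precisely the content of the paper's map $U_d(f)(x)(u)=\check P_f(u,\ldots,u,x)$, the bound $\|\partial_i P_f\|_\infty\le\|f\|_\infty$ is correct because $P_f$ is multi-affine, and Lemma~\ref{OrOuSe} with the ideal property \eqref{niceprop} is exactly what is used. But the step you explicitly defer to ``the main technical content of the argument'' is indeed the whole crux, and as written your proposal does not supply it. You correctly diagnose that a naive $\ell_\infty^N$ or $\ell_1^N$ sum on the tuple $(\partial_i P_f)_i$ makes $\|u\|\|v\|$ pick up a factor of order $N$, and you speculate that a ``more subtle'' $Z$ is needed, but you do not identify it. The paper's resolution is to take $Z=\mathcal L(\ell_\infty^N,\mathcal B^N_{\mathcal S^\flat})$, i.e.\ to treat $(\frac{1}{d}\partial_i P_f)_i$ not as an $N$-tuple with a direct-sum norm but as the operator $x\mapsto \frac{1}{d}\sum_i x_i\,\partial_i P_f$ with its operator norm. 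This is what keeps the factorization norm of order $d$: by the polarization estimate for functions on the Boolean cube (\cite[Proposition~4]{defant2019fourier}) one gets $\|U_d\|\le 2d$, with no $N$-dependence.

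The second missing ingredient is the lemma that controls the Gordon--Lewis constant of this operator space: the paper invokes $\boldsymbol{g\!l}\bigl(\mathcal L(X,Y)\bigr)\le\boldsymbol\lambda(Y)$ for every finite-dimensional Banach lattice $X$ (from \cite[Lemma~22.2]{defant2019libro}). Your substitute $\boldsymbol{g\!l}(Y)\le\boldsymbol\lambda(Y)$ is true, and your derivation of it via an isometric embedding into $L_\infty(\mu)$ is fine, but it applies to $Y=\mathcal B^N_{\mathcal S^\flat}$, not to the space $Z$ you actually need to factor through; the stronger lemma about $\mathcal L(X,Y)$ is what transfers $\boldsymbol\lambda(\mathcal B^N_{\mathcal S^\flat})$ into a bound on $\boldsymbol{g\!l}(Z)$. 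Without naming $Z=\mathcal L(\ell_\infty^N,\mathcal B^N_{\mathcal S^\flat})$ and without this lemma, the ``balancing act'' you describe cannot be closed, so the proposal as it stands is incomplete. The sketch for $\mathcal S\subset\{|S|\le d\}$ (splitting into homogeneous parts via Klimek, reassembling via $\ell_\infty$-sums, paying $(d+1)(1+\sqrt2)^d$ and a factor $2$ from $\boldsymbol{g\!l}(X\oplus_\infty Y)\le 2\max\{\boldsymbol{g\!l}(X),\boldsymbol{g\!l}(Y)\}$) matches the paper's structure but inherits the same gap, since it reduces to the homogeneous case.
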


Again we need preparation for the proof, and start with two elementary observations. We start with a result  taken from \cite[Lemma 22.2]{defant2019libro}.

\begin{lemma}
For every finite dimensional Banach  lattice  $X$, and every finite dimensional Banach space $Y$ one has
  \begin{equation*}\label{supo}
\boldsymbol{g\!l}\big( \mathcal{L}(X,Y)\big) \leq \boldsymbol{\lambda}(Y) \,.
    \end{equation*}
\end{lemma}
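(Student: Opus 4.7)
The plan is to reduce to the case in which the codomain is a finite $\ell_\infty^N$, and then exploit the fact that every finite dimensional Banach lattice carries a $1$-unconditional basis consisting of its atoms.

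First, fix $\varepsilon > 0$. Using \eqref{gammainfty} (and, if necessary, a standard finite dimensional reduction of the measure space $L_\infty(\mu)$ appearing in the definition of $\gamma_\infty(\mathrm{id}_Y)$), choose an isometric embedding $j\colon Y \hookrightarrow \ell_\infty^N$ together with a projection $P\colon \ell_\infty^N \to j(Y)$ of norm $\|P\| \leq \boldsymbol{\lambda}(Y) + \varepsilon$. Post-composition with $j$ and with $P$ yields operators
\[
V\colon \mathcal{L}(X,Y) \to \mathcal{L}(X, \ell_\infty^N),\quad T \mapsto j \circ T,
\]
\[
U\colon \mathcal{L}(X, \ell_\infty^N) \to \mathcal{L}(X,Y),\quad S \mapsto P \circ S,
\]
with $\|V\|=1$, $\|U\| \leq \|P\|$, and $U V = \id_{\mathcal{L}(X,Y)}$. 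Then by the ideal property \eqref{niceprop} of the Gordon--Lewis constant,
\[
\boldsymbol{g\!l}(\mathcal{L}(X,Y)) \,\leq\, \|U\|\,\|V\|\,\boldsymbol{g\!l}(\mathcal{L}(X,\ell_\infty^N)) \,\leq\, \bigl(\boldsymbol{\lambda}(Y)+\varepsilon\bigr)\,\boldsymbol{g\!l}(\mathcal{L}(X,\ell_\infty^N)).
\]

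It therefore remains to prove $\boldsymbol{g\!l}(\mathcal{L}(X,\ell_\infty^N)) \leq 1$. The canonical identification $T \mapsto (\pi_i \circ T)_{i=1}^N$, where $\pi_i$ denotes the $i$-th coordinate functional on $\ell_\infty^N$, yields the isometric isomorphism $\mathcal{L}(X,\ell_\infty^N) \cong \ell_\infty^N(X^*)$. Since the dual of a finite dimensional Banach lattice is again a finite dimensional Banach lattice, and $\ell_\infty$-sums preserve the lattice structure, $\ell_\infty^N(X^*)$ is itself a finite dimensional Banach lattice. Every such lattice possesses a $1$-unconditional basis (its atoms), so the Gordon--Lewis inequality \eqref{gl-inequality} gives
\[
\boldsymbol{g\!l}(\ell_\infty^N(X^*)) \,\leq\, \boldsymbol{\chi}(\ell_\infty^N(X^*)) \,\leq\, 1.
\]
Combining everything and letting $\varepsilon \to 0$ finishes the proof.

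The only point demanding some technical care is the very first step: one has to verify that the projection constant $\boldsymbol{\lambda}(Y)$ is (up to an arbitrarily small $\varepsilon$) realized by a projection from some \emph{finite} $\ell_\infty^N$ onto an isometric copy of $Y$. This is a standard consequence of \eqref{gammainfty} together with the finite dimensionality of $Y$, since $\gamma_\infty(\id_Y)$ can be approximated by factorizations $Y \to \ell_\infty^N \to Y$.
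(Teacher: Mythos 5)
Your proof is correct, and it follows what is essentially the canonical argument for this fact. Note that the paper does not actually prove this lemma: it simply cites \cite[Lemma 22.2]{defant2019libro}, so there is no in-paper proof to compare against. Your argument has exactly the structure one would expect: (i) factor $\id_Y$ through $\ell_\infty^N$ using the identity $\boldsymbol{\lambda}(Y) = \gamma_\infty(\id_Y)$ (and the paper's observation that for finite dimensional spaces the $\gamma_\infty$ infimum may be taken over factorizations through $\ell_\infty^n$), (ii) push this factorization through $\mathcal{L}(X,\cdot)$ to obtain $\id_{\mathcal{L}(X,Y)} = U V$ with $\|U\|\|V\| \leq \boldsymbol{\lambda}(Y)+\varepsilon$, apply the ideal property \eqref{niceprop}, and (iii) reduce to showing $\boldsymbol{g\!l}(\mathcal{L}(X,\ell_\infty^N)) \leq 1$ via the isometry $\mathcal{L}(X,\ell_\infty^N) \cong \ell_\infty^N(X^\ast)$ and the fact that a finite dimensional Banach lattice has a $1$-unconditional basis, so \eqref{gl-inequality} applies. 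Two minor technicalities worth being fully explicit about, though neither is a gap: in step (ii), after post-composing with $P$ the image lands in $\mathcal{L}(X, j(Y))$, so one should compose once more with the isometry $j^{-1}$ to land back in $\mathcal{L}(X,Y)$ (this does not change the norm bound); and the claim that a finite dimensional Banach lattice has a $1$-unconditional basis rests on the classical fact that every finite dimensional Archimedean Riesz space is lattice isomorphic to $\mathbb{R}^n$ with coordinatewise order, after which the lattice-norm condition is literally $1$-unconditionality of the coordinate basis. With those points spelled out, the argument is complete and matches the standard proof in the cited monograph.
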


The second tool is an elementary piece of multilinear algebra.

\begin{lemma}\label{sym}
For $f \in\mathcal{B}_{=d}^N$ let $P_f$ be the  associated
$d$-homogeneous polynomial on $\mathbb{R}^N$ given by
\[
P_f(x)  = \sum_{1\leq j_1 < \dots < j_d \leq N} \hat{f}(\{j_1, \dots, j_d\}) x_{j_1} \ldots x_{j_d}\,.
\]
Then the unique $d$-linear symmetrization $\check{P}_f \colon (\mathbb{R}^N)^d \to \mathbb{R}$ of $P_f$ is given by
\[
\check{P}_f(u^1, \dots, u^d) =  \sum_{\substack{\bi \in  [N]^d \\   \text{$i_k\neq i_\ell$ for $k \neq \ell$}}}
\frac{\hat{f}(\{i_1, \dots, i_d\})}{d!} u^1_{i_1}\dots u^d_{i_d}\,.
\]
\end{lemma}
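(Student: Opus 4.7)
The plan is to define the candidate $d$-linear form
\[
L(u^1, \ldots, u^d) := \sum_{\substack{\bi \in [N]^d \\ i_k \neq i_\ell \text{ for } k \neq \ell}} \frac{\hat{f}(\{i_1,\ldots,i_d\})}{d!}\, u^1_{i_1}\cdots u^d_{i_d}
\]
and then to verify the three properties that characterize the symmetrization: (i) $L$ is $d$-linear; (ii) $L$ is symmetric in its $d$ arguments; (iii) $L(x,\ldots,x) = P_f(x)$ for every $x \in \mathbb{R}^N$. Uniqueness of the symmetrization $\check{P}_f$ then forces $L = \check{P}_f$.

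For (i), $d$-linearity is immediate from the explicit formula since the $k$-th variable $u^k$ appears only through the single factor $u^k_{i_k}$ in each summand. For (ii), given a permutation $\sigma$ of $[d]$, reindex the sum by setting $j_k = i_{\sigma(k)}$; the set $\{i_1,\ldots,i_d\}$ equals $\{j_1,\ldots,j_d\}$, and since $\hat{f}$ depends only on this set, the coefficient is unchanged while the product $u^{\sigma(1)}_{i_1}\cdots u^{\sigma(d)}_{i_d}$ becomes $u^1_{j_1}\cdots u^d_{j_d}$; the sum ranges again over all tuples of pairwise distinct indices, giving $L$ back.

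For (iii), restrict to the diagonal $u^1 = \cdots = u^d = x$ and group the summation over distinct tuples $\bi$ by their underlying set: for each $\{j_1 < \cdots < j_d\}$ there are exactly $d!$ tuples $\bi$ with $\{i_1,\ldots,i_d\} = \{j_1,\ldots,j_d\}$, all contributing the same monomial $x_{j_1}\cdots x_{j_d}$. Hence the factor $1/d!$ cancels, yielding
\[
L(x,\ldots,x) = \sum_{1 \leq j_1 < \cdots < j_d \leq N} \hat{f}(\{j_1,\ldots,j_d\})\, x_{j_1}\cdots x_{j_d} = P_f(x).
\]
Since the polarization of a $d$-homogeneous polynomial is unique as a symmetric $d$-linear form, we conclude $L = \check{P}_f$.

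There is no real obstacle here: the statement is a bookkeeping identity, and the only point requiring any care is the combinatorial factor in step (iii), where the $1/d!$ must be matched against the $d!$ orderings of a given $d$-subset of $[N]$. This is exactly the combinatorial accounting already used to pass between $\mathcal{M}(d,N)$ and $\mathcal{J}(d,N)$ in the preliminaries (cf.\ \eqref{scholz} with $\alpha$ tetrahedral), so the proof is essentially a one-line verification.
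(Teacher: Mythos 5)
Your proof is correct and rests on the same underlying idea as the paper's: uniqueness of the symmetric $d$-linear form together with matching coefficients on the diagonal. The paper simply runs the argument in the opposite direction---it starts from the symmetric coefficient matrix $(a_{\bi})$ of $\check{P}_f$, expands $\check{P}_f(x,\dots,x)=P_f(x)$, and reads off $a_{\bj}=\hat f(\{j_1,\dots,j_d\})/d!$ using~\eqref{scholz}---whereas you define the candidate form and verify symmetry, multilinearity, and the diagonal restriction; both are the same bookkeeping.
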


\begin{proof}
Let $(a_\bi)_{\bi \in \mathcal{M}(d,N)}$ be the symmetric matrix defining $\check{P}_f$. Then for all $x \in \mathbb{R}^N$
we have
\begin{align*}
\sum_{1 \leq j_1 < \dots < j_d \leq N} \hat{f}(\{j_1, \dots, j_d\}) x_\bj
=
\check{P}_f(x, \ldots, x)
=
\sum_{\bi \in \mathcal{M}(d,N)} a_\bi x_\bi
=
\sum_{\bj \in \mathcal{J}(d,N)} \sum_{\bi \in [\bj]}a_\bj x_\bj
=
\sum_{\bj \in \mathcal{J}(d,N)} |[\bj]| a_\bj x_\bj\,,
  \end{align*}
and  hence $\hat{f}(\{j_1, \dots, j_d\}) = d!  a_\bj$ whenever  $ 1 \leq j_1 < \dots < j_d \leq N$, and $= 0$ else.
    \end{proof}

\begin{proof}[Proof of Theorem~\ref{gl_versus_proj}]
To see the first statement, we  consider the following commutative diagram:
\begin{equation}\label{diagram}
  \xymatrix
{
 \mathcal{B}^N_{\mathcal{S}} \ar[rr]^{id} \ar[d]^{\text{$U_d$}} &  & \mathcal{B}^N_{\mathcal{S}}
 \\
\mathcal{L}\big(\ell^N_\infty,\mathcal{B}^N_{\mathcal{S}^\flat}\big)
\ar[r]_{I_d\;\;\;\;\;\;\;} & \mathcal{L}\big(\ell^N_\infty,\mathcal{P}_{d-1}( \ell^N_\infty)\big) \ar[r]_{\;\;\;\;\;\;\;\;\;\;\;\;\;\;V_d} &  \mathcal{P}_{d}(\ell^N_\infty), \ar[u]^{\mathbf{R}}
 \\
}
\end{equation}
where $\mathbf{R}$ is the canonical projection annihilating coefficients
with multi indices not generated by sets in $\mathcal{S}$, $I_d$ is the canonical isometric embedding and
\begin{align*}
&
\big(U_d(f)x\big)(u) := \check{P}_f( u, \ldots, u
,x)
\,\,\, \text{ for  } \,\,\, f \in  \mathcal{B}^N_{\mathcal{S}} \,\,\, \text{ and  } \,\,\, x \in \ell^N_\infty,u \in \{-1,+1\}^N \,,
\\[1ex]&
V_d(T)(y) := (Ty)(y) \,\,\, \text{ for  } \,\,\,  T \in \mathcal{L}\big( \ell^N_\infty,\mathcal{B}^N_{\mathcal{S}^\flat}\big)
\,\,\, \text{ and  } \,\,\, y \in  \ell^N_\infty.
\end{align*}
We show that $U_d$, as an operator from $\mathcal{B}^N_{\mathcal{S}}$ into $\mathcal{L}\big(\ell^N_\infty,\mathcal{B}^N_{\mathcal{S}^\flat}\big)$,
is well-defined. Indeed, by Lemma~\ref{sym} for $x \in \ell^N_\infty$ and $u \in \{-1,+1\}^N$
 \begin{align*}
 \check{P}_f( u, \ldots, u,x)
 &
 =
  \sum_{\substack{\bi \in  [N]^d \\   \text{$i_k\neq i_\ell$ for $k \neq \ell$}}}
\frac{\hat{f}(\{i_1, \dots, i_d\})}{d!} u_{i_1}\dots u_{i_{d-1}} x_{i_d}
\\&
  =
      \sum_{\substack{\bi \in  [N]^{d-1} \\   \text{$i_k\neq i_\ell$ for $k \neq \ell$}}}
      \bigg(\sum_{\substack{1 \leq  \ell \leq N\\ i_k \neq \ell}}
\frac{\hat{f}(\{i_1, \dots, i_d\})}{d!} x_{\ell}\bigg)   u_\bi
\\&
  =
  \sum_{1 \leq j_1 < \dots < j_d \leq N}
  (d-1)!    \bigg(\sum_{\substack{1 \leq  \ell \leq N\\ j_k \neq \ell}}
\frac{\hat{f}(\{j_1, \dots, j_d\})}{d!} x_{\ell}\bigg)   u_\bj
\\&
  =
  \sum_{S \in \mathcal{S}^\flat}
  (d-1)!  \,  \bigg(\sum_{\substack{1 \leq  \ell \leq N\\ j_k \neq \ell}}
\frac{\hat{f}(S)}{d!} x_{\ell}\bigg) \,\,  u^S\,.
 \end{align*}
By the  polarization formula from \cite[Proposition~4]{defant2019fourier} we have $\|U_d\| \leq 2d$, and moreover trivially  $\|V_d\| \leq~1$. Hence by the ideal property~\eqref{niceprop} of the Gordon--Lewis constant
\[
\boldsymbol{g\!l}\big(\mathcal{B}^N_{\mathcal{S}}\big) \leq \,\,2d \,\, \big\|\mathbf{R}\colon \mathcal{P}_{d}(\ell^N_\infty) \to \mathcal{B}^N_{\mathcal{S}}\big\|\, \,\boldsymbol{g\!l}\big(\mathcal{L}\big(\ell_\infty^N,\mathcal{B}^N_{\mathcal{S}^\flat}\big)\big)
\leq  \,\, 2d \,\, \|\mathbf{R}\colon \mathcal{P}_{d}(\ell^N_\infty) \to \mathcal{B}^N_{\mathcal{S}}\big\|\, \, \boldsymbol{\lambda}\big(\mathcal{B}^N_{\mathcal{S}^\flat}\big)\,,
\]
where for the last estimate we use  Lemma~\ref{supo}. Finally, since by Lemma~\ref{OrOuSe}
\begin{equation*}
\big\|\mathbf{R}\colon \mathcal{P}_{d}(\ell^N_\infty) \to \mathcal{B}^N_{\mathcal{S}}\big\|  \leq  \kappa^d 2^{d-1} \big\|\mathbf{Q}\colon \mathcal{B}^N_{=d} \to \mathcal{B}^N_{\mathcal{S}}\big\|\,,
\end{equation*}
the proof of  the first claim is complete.

For the second claim we assume that $\mathcal{S} \subset \big\{ S \subset [N] \colon |S| \leq d \big\}$, and consider the following commutative diagram
\begin{equation} \label{greatpic}
    \xymatrix
{
  \mathcal{B}^N_{\mathcal{S}}
  \ar[d]^{\text{$\mathbf{O}\oplus\bigoplus\mathbf{P}_{k}$}}
  \ar[r]^{\text{$ \id_{ \mathcal{B}^N_{\mathcal{S}}}$}}
 & \mathcal{B}^N_{\mathcal{S}}
 \\
  \mathbb{C} \oplus_\infty \bigoplus_\infty \mathcal{B}^N_{\mathcal{S}_{=k}}
  \ar[d]^{\text{$ \id_\mathbb{C}\oplus
  \bigoplus U_k$}}
  \ar[r]^{\text{$\id_\mathbb{C}\oplus\bigoplus \id_{ \mathcal{B}^N_{\mathcal{S}_{=k}}}$}}
 &
 \mathbb{C} \oplus_1\bigoplus_1 \mathcal{B}^N_{\mathcal{S}_{=k}} \ar[u]^{\sum }  &
 \mathbb{C} \oplus_1\bigoplus_1 \mathcal{P}_{k}(\ell^N_\infty) \ar[l]_{\text{$\id_\mathbb{C}\oplus\bigoplus\mathbf{R}_k$}}
 \\
 \mathbb{C} \oplus_\infty\bigoplus_\infty  \mathcal{L}\big(\ell^N_\infty,\mathcal{B}^N_{(\mathcal{S}_{=k})^\flat}\big)
\ar[r]^{\id_\mathbb{C}\oplus\bigoplus I_k}
&
 \,\,\,\mathbb{C} \oplus_\infty\bigoplus_\infty \mathcal{L}\big(\ell^N_\infty,\mathcal{P}_{k-1}(\ell^N_\infty)\big) \ar[r]^{\Phi}
 & \mathbb{C} \oplus_1\bigoplus_1 \mathcal{L}\big(\ell^N_\infty,\mathcal{P}_{k-1}(\ell^N_\infty)\big)
 \ar[u]^{\text{$\id_\mathbb{C}\oplus\bigoplus V_k$}}\,.
}
\end{equation}
Let us explain our notation in this diagram:
Note first that $\mathbf{P}_k$ and $\mathbf{R}_k$ stand for the canonical projections annihilating coefficients.
By Klimek's result already used in~\eqref{klimek}, for each $k$, we have
\[
\| \mathbf{P}_{k}\colon \mathcal{B}^N_{\mathcal{S}} \to \mathcal{B}^N_{\mathcal{S}=k} \| \leq (1+\sqrt{2})^d\,,
\]
and  from  Lemma~\ref{OrOuSe}, we conclude that
\begin{equation*}
 \big\|\mathbf{R}_k \colon \mathcal{P}_{k}(\ell^N_\infty) \to \mathcal{B}^N_{\mathcal{S}_{=k}}\big\|  \leq  \kappa^d 2^{d-1} \big\|\mathbf{Q}_k \colon \mathcal{B}^N_{=k} \to \mathcal{B}^N_{\mathcal{S}_{=k}}\big\|\,.
\end{equation*}
Note that $U_k$, $V_k$ and $I_k$ for each $1 \leq k \leq d$ are the operators from \eqref{diagram}. If $f = a_0 +\sum_{k=1}^d f_k$ is a~decomposition of $f \in \mathcal{B}^N_{\mathcal{S}}$, then
$\mathbf{O}(f) = a_0$, and hence  $\big(\mathbf{O}\oplus\bigoplus\mathbf{P}_{k}\big)(f) = \big(a_0, (f_k)_{k=1}^d\big)$. Additionally, $\sum$ is the mapping which assigns to every $\big(a_0, (f_k)_{k=1}^m\big)$ the polynomial $f = a_0 +\sum_{k=1}^d f_k$,  and $\Phi$ stands for the identity map. The notation for the rest of the maps is self-explaining.



Applying the ideal property~\eqref{niceprop}, we all  together arrive at
\[
\boldsymbol{g\!l}\big(\mathcal{B}^N_{\mathcal{S}}\big)
 \le 2d (d+1)(1+\sqrt{2})^d \,\max_{1 \leq  k \leq d}\big\|\mathbf{R}_k \colon \mathcal{P}_k(\ell_\infty^N) \to \mathcal{B}^N_{\mathcal{S}=k}\big\|\,\,
\boldsymbol{g\!l}\big(\mathbb{C} \oplus_{\infty} \bigoplus_\infty   \mathcal{L}\big(\ell^N_\infty,\mathcal{B}^N_{(\mathcal{S}_{=k})^\flat}\big)\big)\,.
\]
It is easy to check that for any Banach spaces $X$ and $Y$ one has $\boldsymbol{g\!l}\big(X \oplus_{\infty} Y\big) \leq
2\,\max\{\boldsymbol{g\!l}(X),\,\boldsymbol{g\!l}(Y)\}$. Thus, to complete the proof, it suffices to show that
\begin{equation*}\label{claimA}
\boldsymbol{g\!l}\big( \bigoplus_\infty   \mathcal{L}\big(\ell^N_\infty,\mathcal{B}^N_{(\mathcal{S}_{=k})^\flat}\big)\big)
 \leq
 \max_{1 \leq  k \leq m} \boldsymbol{\lambda}\big( \mathcal{B}^N_{(\mathcal{S}_{=k})^\flat}\big)\,.
\end{equation*}
Indeed, using standard properties of $\varepsilon$- and $\pi$-tensor products (see e.g., \cite{defant1992tensor}), we have
\begin{align*}
\bigoplus_\infty  \mathcal{L}\big(\ell^N_\infty,\mathcal{B}^N_{(\mathcal{S}_{=k})^\flat}\big) &
\hookrightarrow \bigoplus_\infty  \mathcal{L}\big(\ell^N_\infty,\bigoplus_\infty \mathcal{B}^N_{(\mathcal{S}_{=k})^\flat}\big) \\
& = \ell_\infty^d \otimes_\varepsilon \big[ \ell^N_1 \otimes_\varepsilon \bigoplus_\infty \mathcal{B}^N_{(\mathcal{S}_{=k})^\flat}\big] \\
& = \big[\ell_\infty^d \otimes_\varepsilon \ell^N_1 \big] \otimes_\varepsilon \bigoplus_\infty \mathcal{B}^N_{(\mathcal{S}_{=k})^\flat} \\
& = \big(\ell_1^d \otimes_\pi \ell^N_\infty\big)^\ast   \otimes_\varepsilon \bigoplus_\infty \mathcal{B}^N_{(\mathcal{S}_{=k})^\flat} =
\mathcal{L}\big(  \ell_1^d(\ell^N_\infty), \bigoplus_\infty \mathcal{B}^N_{(\mathcal{S}_{=k})^\flat}\big)\,,
\end{align*}
where the first space in fact is $1$-complemented in the second one, and all other identifications are isometries. Then we deduce from Lemma~\ref{supo} that
\[
\boldsymbol{g\!l}\big( \bigoplus_\infty  \mathcal{L}\big(\ell^N_\infty,\mathcal{B}^N_{(\mathcal{S}_{=k})^\flat}\big)\big)
 \leq
\boldsymbol{g\!l}\big(\mathcal{L}\big(  \ell_1^d(\ell^N_\infty), \bigoplus_\infty \mathcal{B}^N_{(\mathcal{S}_{=k})^\flat}\big)\big)
 \leq
 \boldsymbol{\lambda} \big(\bigoplus_\infty \mathcal{B}^N_{(\mathcal{S}_{=k})^\flat} \big)\,.
\]
Since
\[
\boldsymbol{\lambda} \big(\bigoplus_\infty \mathcal{B}^N_{(\mathcal{S}_{=k})^\flat} \big)
= \gamma_\infty \big(\id_{\bigoplus_\infty \mathcal{B}^N_{(\mathcal{S}_{=k})^\flat}} \big)
\leq \max_{1 \leq  k \leq m} \gamma_\infty \big(\id_{\mathcal{B}^N_{(\mathcal{S}_{=k})^\flat}} \big)
= \max_{1 \leq  k \leq m} \boldsymbol{\lambda} \big(\mathcal{B}^N_{(\mathcal{S}_{=k})^\flat} \big)\,,
\]
the proof is complete.
\end{proof}

\subsection{Sidon constants and the Bohnenblust-Hille inequality}
In Theorem~\ref{BGL3} we prove that the projection constant of $\mathcal{B}^N_{\mathcal{S} }$ and its Sidon constant
(see again \eqref{sidon}) are closely related. In the following result we  describe the asymptotic behaviour of $\sid (\mathcal{B}^N_{\mathcal{S} })$ in the case that $\mathcal{S}$ is 'big'.

\begin{proposition} \label{sid}
Given integers $1 \leq d \leq N$, let $\mathcal{S} \subset [N]$ be such that $(N/d)^{d/2}~\leq~|\mathcal{S}|$ and $|S| \leq d$
for all $S \in \mathcal{S}$. Then there are constants
$C_1$, $C_2 \ge 1$ (independent of $N,d,\mathcal{S}$) such that
\[
C_1 \, \, \frac{1}{\sqrt{N}}|\mathcal{S}|^{\frac{1}{2}}
\, \, \leq \,\, \boldsymbol{\sid} (\mathcal{B}^N_{\mathcal{S} }) \,\,\leq\,\,
C_2^{\sqrt{d \log d}} \frac{1}{\sqrt{N}}|\mathcal{S}|^{\frac{1}{2}}\,.
\]
\end{proposition}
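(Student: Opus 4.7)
The plan is to establish the two bounds separately: the lower bound via a probabilistic (Kahane--Salem--Zygmund / Hoeffding) argument, and the upper bound via the Boolean-cube Bohnenblust--Hille inequality from~\cite{defant2019fourier} followed by H\"older's inequality.

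For the lower bound I would consider random sign polynomials. Let $(\varepsilon_S)_{S\in\mathcal{S}}$ be independent Rademacher variables and set $f_\varepsilon:=\sum_{S\in\mathcal{S}}\varepsilon_S\chi_S\in\mathcal{B}^N_\mathcal{S}$; then $\sum_{S\in\mathcal{S}}|\widehat{f_\varepsilon}(S)|=|\mathcal{S}|$ for every realization. At each fixed $x\in\{-1,+1\}^N$, the value $f_\varepsilon(x)$ is a sum of $|\mathcal{S}|$ independent $\pm1$ terms $\varepsilon_S\chi_S(x)$, so Hoeffding's inequality yields $\mathbb{P}(|f_\varepsilon(x)|>t)\le 2\exp(-t^2/(2|\mathcal{S}|))$. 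A union bound over the $2^N$ points of the cube combined with the choice $t=c\sqrt{N|\mathcal{S}|}$ for a sufficiently large absolute constant $c$ makes the failure probability strictly less than $1$, producing a realization $\varepsilon^*$ with $\|f_{\varepsilon^*}\|_\infty\le c\sqrt{N|\mathcal{S}|}$. Dividing the two quantities gives $\boldsymbol{\sid}(\mathcal{B}^N_\mathcal{S})\ge c^{-1}\sqrt{|\mathcal{S}|/N}$. Notice this part uses neither the size hypothesis nor the degree restriction.

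For the upper bound I would invoke the Bohnenblust--Hille inequality on the Boolean cube from~\cite{defant2019fourier}: for every $f\in\mathcal{B}^N_{\le d}$,
\[
\Big(\sum_{|S|\le d}|\widehat f(S)|^{2d/(d+1)}\Big)^{(d+1)/(2d)}\le C^{\sqrt{d\log d}}\,\|f\|_\infty.
\]
For $f\in\mathcal{B}^N_\mathcal{S}$, H\"older's inequality on $\mathcal{S}$ with conjugate exponents $2d/(d+1)$ and $2d/(d-1)$ then gives
\[
\sum_{S\in\mathcal{S}}|\widehat f(S)|\le|\mathcal{S}|^{(d-1)/(2d)}\Big(\sum_{S}|\widehat f(S)|^{2d/(d+1)}\Big)^{(d+1)/(2d)}\le C^{\sqrt{d\log d}}\,|\mathcal{S}|^{(d-1)/(2d)}\,\|f\|_\infty,
\]
so $\boldsymbol{\sid}(\mathcal{B}^N_\mathcal{S})\le C^{\sqrt{d\log d}}|\mathcal{S}|^{(d-1)/(2d)}$. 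Writing $|\mathcal{S}|^{(d-1)/(2d)}=|\mathcal{S}|^{1/2}\,|\mathcal{S}|^{-1/(2d)}$ and invoking the size hypothesis $|\mathcal{S}|\ge(N/d)^{d/2}$ (so that $|\mathcal{S}|^{1/(2d)}\ge(N/d)^{1/4}$) reshapes this into the claimed form $|\mathcal{S}|^{1/2}/\sqrt{N}$, with the residual $d$-dependent factor absorbed into the sub-exponential constant $C_2^{\sqrt{d\log d}}$.

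The main obstacle is precisely this last bookkeeping step of the upper bound: confirming that the combination of the BH exponent $|\mathcal{S}|^{(d-1)/(2d)}$ with the size hypothesis $(N/d)^{d/2}\le|\mathcal{S}|$ really produces the claimed upper bound with a constant of the form $C_2^{\sqrt{d\log d}}$ where $C_2$ is independent of $N$, $d$ and $\mathcal{S}$. The size condition has been calibrated exactly to enable this exponent matching; tracking how the small $d$-powers recombine with the sub-exponential BH constant is the delicate technical point of the argument.
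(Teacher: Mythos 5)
Your outline tracks the paper's own argument step for step: the lower bound is the Boolean-cube Kahane--Salem--Zygmund inequality (which the paper cites; your Hoeffding-plus-union-bound derivation is essentially its proof), and the upper bound is the subexponential Bohnenblust--Hille inequality followed by H\"older. So methodologically you and the paper agree on both halves.

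The step you flag as ``delicate bookkeeping'' is, however, exactly where the argument does not close as written. After H\"older you correctly arrive at
\[
\boldsymbol{\sid}\big(\mathcal{B}^N_\mathcal{S}\big)\,\le\, C^{\sqrt{d\log d}}\,|\mathcal{S}|^{(d-1)/(2d)}
\,=\, C^{\sqrt{d\log d}}\,|\mathcal{S}|^{1/2}\,|\mathcal{S}|^{-1/(2d)}\,.
\]
Feeding in $|\mathcal{S}|\ge(N/d)^{d/2}$ yields only $|\mathcal{S}|^{-1/(2d)}\le(d/N)^{1/4}$. Since $(d/N)^{1/4}=d^{1/4}N^{1/4}\cdot N^{-1/2}$, the residual factor is $d^{1/4}N^{1/4}$, which grows with $N$ and therefore cannot be absorbed into a constant $C_2^{\sqrt{d\log d}}$ that is independent of $N$. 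To obtain $|\mathcal{S}|^{-1/(2d)}\lesssim\sqrt{d}/\sqrt{N}$ (and then absorb the $\sqrt{d}$) you need the stronger size hypothesis $|\mathcal{S}|\ge(N/d)^{d}$ --- which is in fact what the subsequent corollaries supply, since $(N/d)^d\le\binom{N}{d}$. For what it is worth, the published proof writes the H\"older factor as $|\mathcal{S}|^{1/2}/|\mathcal{S}|^{1/d}$ rather than the correct $|\mathcal{S}|^{1/2}/|\mathcal{S}|^{1/(2d)}$, which hides this mismatch; your more careful exponent accounting exposes it. So either strengthen the hypothesis to $(N/d)^d\le|\mathcal{S}|$, or accept $N^{-1/4}$ in place of $N^{-1/2}$ under the hypothesis as stated.
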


For the proof of the upper bound we need  the so-called  subexponential Bohnen\-blust-Hille
inequality for functions on Boolean cubes from \cite[Theorem~1]{defant2018bohr}: There is a~constant $C \ge 1$ such that for each $1 \leq d \leq N$
and every $f \in \mathcal{B}_{\leq d}^{N}$ one has
\begin{equation}\label{equa:BHBoolean}
\Big(\sum_{\substack{S \subset [N]\\ |S|\leq d}}{|\widehat{f}(S)|^{\frac{2d}{d+1}}}\Big)^{\frac{d+1}{2d}}
\leq C^{\sqrt{d \log d}} \, \| f\|_{\infty}.
\end{equation}

\begin{proof}[Proof of Proposition~$\ref{sid}$]
The upper bound  follows from H\"older's inequality and \eqref{equa:BHBoolean}. That is,  for all functions
$f \in \mathcal{B}^N_{\mathcal{S}}$,
\[
\sum_{|S|\leq d} |\hat{f}(S)| \leq \bigg(  \sum_{|S|\leq d} |\hat{f}(S)|^{\frac{2d}{d+1}} \bigg)^{\frac{2d}{d+1}}
|\mathcal{S}|^{\frac{d-1}{2d}} \leq C^{\sqrt{d\log d }} \frac{1}{|\mathcal{S}|^{\frac{1}{d}}} |\mathcal{S}|^{\frac{1}{2}}\|f\|_\infty
\,.
\]
But by assumption
$
\sqrt{N} \leq \sqrt{d} |\mathcal{S}|^{\frac{1}{d}}\,,
$
and hence the claim follows from the definition of Sidon constants given in~\eqref{sidon}. The proof of the lower estimate is  probabilistic. Indeed, by the Kahane-Salem-Zygmund
inequality for the Boolean cube (see, e.g., \cite[Lemma~3.1]{defant2018bohr}) there is a family $(\varepsilon_S)_{S \in \mathcal{S}}$
of signs such that for $f =  \sum_{S \in \mathcal{S}}  \varepsilon_S \chi_S$ we have
\[
\|f\|_\infty \leq 6 \sqrt{\log 2} \,\,  \sqrt{N} \Big( \sum_{S \in \mathcal{S}}  |\varepsilon_S|^2  \Big)^{\frac{1}{2}}\,,
\]
and hence
\[
|\mathcal{S}| = \sum_{S \in \mathcal{S}}|\widehat{f}(S)|\leq  \boldsymbol{\sid}(\mathcal{B}^N_{\mathcal{S} }) \,\,  6 \sqrt{\log 2} \,\,\sqrt{N} |\mathcal{S}|^{\frac{1}{2}}\,.
\]
This completes the argument.
\end{proof}

\begin{corollary}
There are constants $C_1, C_2 > 0$ such that for each integer $1 \leq d \leq N$ one has
\[
C_1 \, \, \frac{1}{\sqrt{N}} \binom{N}{d}^{\frac{1}{2}}
\, \, \leq \,\, \boldsymbol{\sid} \big(\mathcal{B}^N_{=d}\big) \,\,\leq\,\,
C_2^{\sqrt{d \log d}} \frac{1}{\sqrt{N}} \binom{N}{d}^{\frac{1}{2}}
\]
and
\[
C_1 \, \, \frac{1}{\sqrt{N}} \left( \sum_{k=0}^d \binom{N}{k}\right)^{\frac{1}{2}}
\, \, \leq \,\, \boldsymbol{\sid}\big(\mathcal{B}^N_{\leq d}\big) \,\,\leq\,\,
C_2^{\sqrt{d \log d}} \frac{1}{\sqrt{N}} \left( \sum_{k=0}^d \binom{N}{k}\right)^{\frac{1}{2}}\,.
\]
\end{corollary}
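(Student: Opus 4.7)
The plan is to obtain both two-sided estimates by direct specialization of Proposition~\ref{sid} to the two support families
\[
\mathcal{S}_1 = \big\{S \subset [N] : |S| = d\big\} \quad\text{and}\quad \mathcal{S}_2 = \big\{S \subset [N] : |S| \leq d\big\}\,,
\]
whose cardinalities are exactly $\binom{N}{d}$ and $\sum_{k=0}^{d}\binom{N}{k}$, respectively. These match the quantities appearing on the right-hand sides of the corollary once the factor $1/\sqrt{N}$ is pulled out, so there is nothing to do combinatorially beyond a clean cardinality count.

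The only input that requires checking is the hypothesis $(N/d)^{d/2}\leq |\mathcal{S}|$ of Proposition~\ref{sid}; the condition $|S|\leq d$ for all $S \in \mathcal{S}$ holds trivially in both cases. For $\mathcal{S}_1$ the elementary bound $(N/d)^d\leq \binom{N}{d}$ recorded in~\eqref{ukraineA} yields
\[
(N/d)^{d/2} \,\leq\, (N/d)^d \,\leq\, \binom{N}{d} \,=\, |\mathcal{S}_1|\,,
\]
where in the first step we used $N/d\geq 1$. For $\mathcal{S}_2$ the trivial estimate $|\mathcal{S}_2|\geq \binom{N}{d}$ together with the same bound furnishes $(N/d)^{d/2}\leq |\mathcal{S}_2|$.

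With both hypotheses verified, Proposition~\ref{sid} applies directly and yields, with the constants $C_1, C_2$ inherited from there, the two asserted pairs of inequalities. There is no real obstacle in this step: the entire analytical content, namely the subexponential Bohnenblust--Hille inequality~\eqref{equa:BHBoolean} for the upper bound and the Kahane--Salem--Zygmund inequality for the lower one, is already encoded in Proposition~\ref{sid}, and the present corollary is merely the tidy cardinality-based specialization for the two most important support families $\mathcal{S}_1$ and~$\mathcal{S}_2$.
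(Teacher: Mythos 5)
Your proposal is correct and takes exactly the same route as the paper: both simply specialize Proposition~\ref{sid} to the families $\{S:|S|=d\}$ and $\{S:|S|\leq d\}$, verifying the size hypothesis $(N/d)^{d/2}\leq|\mathcal{S}|$ via the elementary bound $(N/d)^d\leq\binom{N}{d}\leq\sum_{k=0}^d\binom{N}{k}$ from~\eqref{ukraineA}. Your write-up is slightly more explicit about the harmless step $(N/d)^{d/2}\leq(N/d)^d$ (which uses $N\geq d$), but otherwise the argument coincides with the one in the paper.
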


\begin{proof}
Since
$
\Big(\frac{N}{d}\Big)^d \leq \binom{N}{d} \leq  \sum_{k=0}^d \binom{N}{k}
$
(see again  \eqref{ukraineA}), both  sets $\mathcal{S} = \{ S  \colon |S| =d \}$  and $\mathcal{S} = \{ S  \colon |S| \leq d \}$
satisfy the assumptions of Proposition~\ref{sid}.
\end{proof}

\begin{corollary}
There are constants $C_1, C_2 >0$ such that for all $1 \leq d \leq N$,
\[
C_1 \frac{1}{\sqrt{d}}\,\bigg( \frac{N}{d}  \bigg)^{\frac{d-1}{2}} \leq
\boldsymbol{\sid} \big(\mathcal{B}^N_{= d}\big) \leq \boldsymbol{\sid} \big(\mathcal{B}^N_{\leq d}\big) \leq
C_2^{\sqrt{d \log d}} e^{\frac{d}{2}} \,\frac{1}{\sqrt{d}}\,\bigg( \frac{N}{d}  \bigg)^{\frac{d-1}{2}}\,.
\]
In particular, we have the following hypercontractive comparison:
\[
\boldsymbol{\sid} \big(\mathcal{B}^N_{= d}\big) \sim_{C^d }
\bigg( \frac{N}{d}  \bigg)^{\frac{d-1}{2}}
\quad \text{ and } \quad
 \boldsymbol{\sid} \big(\mathcal{B}^N_{\leq d}\big) \sim_{C^d }
\bigg( \frac{N}{d}  \bigg)^{\frac{d-1}{2}}
\,.
\]
\end{corollary}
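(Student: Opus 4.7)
The plan is to combine the sandwich bounds from the preceding corollary with the elementary binomial estimates in \eqref{ukraineA} and \eqref{ukraineAA}; no new ingredient beyond some bookkeeping and a straightforward monotonicity observation for Sidon constants is needed.

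For the lower bound, the first step will apply $\binom{N}{d} \geq (N/d)^d$ from \eqref{ukraineA}, which gives
\[
\frac{1}{\sqrt{N}}\binom{N}{d}^{1/2} \,\geq\, \frac{(N/d)^{d/2}}{\sqrt{N}} \,=\, \frac{1}{\sqrt{d}}\bigg(\frac{N}{d}\bigg)^{(d-1)/2}\,,
\]
so the lower bound in the previous corollary immediately yields $\boldsymbol{\sid}(\mathcal{B}^N_{=d}) \geq C_1 \,d^{-1/2} (N/d)^{(d-1)/2}$. The middle inequality $\boldsymbol{\sid}(\mathcal{B}^N_{=d}) \leq \boldsymbol{\sid}(\mathcal{B}^N_{\leq d})$ will follow by a monotonicity principle for Sidon constants: if $\mathcal{S} \subseteq \mathcal{T}$, then every $f \in \mathcal{B}^N_{\mathcal{S}}$ also belongs to $\mathcal{B}^N_{\mathcal{T}}$ with the identical Fourier--Walsh sum $\sum_S |\widehat f(S)|$, so the defining inequality \eqref{sidon} for $\mathcal{T}$ forces the one for $\mathcal{S}$.

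The second step will handle the upper bound. From \eqref{ukraineAA} one has $\sum_{k=0}^d \binom{N}{k} \leq e^d (N/d)^d$, hence
\[
\frac{1}{\sqrt{N}}\bigg(\sum_{k=0}^{d}\binom{N}{k}\bigg)^{1/2} \,\leq\, \frac{e^{d/2}(N/d)^{d/2}}{\sqrt{N}} \,=\, \frac{e^{d/2}}{\sqrt{d}}\bigg(\frac{N}{d}\bigg)^{(d-1)/2}\,,
\]
and inserting this into the upper bound of the previous corollary reproduces exactly the claimed prefactor $C_2^{\sqrt{d\log d}} e^{d/2}/\sqrt{d}$.

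The hypercontractive comparison is then an easy consequence of the sandwich just established: since $\sqrt{d\log d} \leq d$ and $e^{d/2} = (\sqrt{e})^d$, the upper constant $C_2^{\sqrt{d\log d}} e^{d/2}/\sqrt{d}$ is bounded above by $(C_2 \sqrt{e})^d$, while the lower constant $C_1/\sqrt{d}$ is bounded below by some $c^{-d}$ (because $\sqrt{d}$ grows polynomially whereas $c^d$ grows exponentially). There is no real obstacle in this proof; the whole statement is an arithmetic repackaging of the previous corollary, and the only non-automatic ingredient is the monotonicity step, which is essentially immediate from the definition of the Sidon constant.
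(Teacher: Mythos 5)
Your proposal is correct and follows essentially the same route as the paper: apply the preceding corollary's bounds and feed in the binomial estimates \eqref{ukraineA} and \eqref{ukraineAA}, then observe that $\sqrt{d\log d}\le d$ and $e^{d/2}/\sqrt{d}\le(\sqrt e)^d$ to obtain the $\sim_{C^d}$ comparison. Your explicit justification of the monotonicity $\boldsymbol{\sid}(\mathcal{B}^N_{=d})\le\boldsymbol{\sid}(\mathcal{B}^N_{\leq d})$ is a small but sensible addition that the paper leaves implicit.
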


\begin{proof}
Both first and the third  estimate follow from the  preceding corollary. For the lower one  use again \eqref{ukraineA}, and for the upper
note that it suffices to check that
\[
\frac{1}{\sqrt{N}} \bigg( \sum_{k=0}^d \binom{N}{k}\bigg)^{\frac{1}{2}}
\leq e^\frac{d}{2} \frac{1}{\sqrt{d}}\,\bigg( \frac{N}{d}  \bigg)^{\frac{d-1}{2}}\,;
\]
indeed, this is another consequence of  \eqref{ukraineAA}.
\end{proof}


\end{document}